\documentclass{amsart}  
\usepackage[parfill]{parskip}
\usepackage{graphicx, verbatim}
\usepackage{appendix}
\usepackage{amsfonts}
\usepackage{url}
\usepackage{hyperref} 
\hypersetup{backref,pdfpagemode=FullScreen,colorlinks=true}
\usepackage{amsmath}
\usepackage{amssymb}
\usepackage{tikz, tikz-cd}
\usetikzlibrary{arrows.meta}
\usepackage{amscd}
\usepackage{color}
\usepackage{amsthm}
\usepackage{bm}
\usepackage{indentfirst}
\usepackage[hmargin=3cm,vmargin=3cm]{geometry}
\numberwithin{equation}{section}

\newtheorem{theorem}[equation]{Theorem} 
\newtheorem{proposition}[equation]{Proposition}
 
\newtheorem{lemma}[equation]{Lemma} 
 
\newtheorem{corollary}[equation]{Corollary} 
 
\newtheorem{conjecture}{Conjecture}
\theoremstyle{definition}
\newtheorem{definition}[equation]{Definition}

\newtheorem{terminology}{Terminology}

\theoremstyle{remark}

\newtheorem{remark}[equation]{Remark}
\newtheorem{example}{Example}
\newtheorem{question}{Question}

\DeclareMathOperator {\mult} {mult}

\DeclareMathOperator{\image}{\mathrm{image}}

\begin{document}
% \href{http://yashamon.github.io/web2/papers/conformalsymplectic.pdf}{Direct link to author's version}
\title{Geodesic string counting invariants and arithmetic of 
multiplicities}
\author{Yasha Savelyev}
\thanks {Supported by CONAHCYT 
research grant CF-2023-I}
\email{yasha.savelyev@gmail.com}
\address{Faculty of Science, University of Colima, Mexico}
\keywords{Nonpositive curvature, Fuller
index, geodesic counts, $S ^{1}$-equivariant homology of the
loop space, KAM theory}
\begin{abstract} 
We study rational valued counts of geodesic strings (reparametrization equivalence classes of closed geodesics) for complete Riemann-Finsler manifolds,
based on the Fuller index of the geodesic flow. The main conceptual
result is a product type formula for these counts. Combined with
aspects of KAM theory, it yields the following sample phenomenon. Let
$g$ be a generic Finsler metric on $T ^{2}$, sufficiently $C ^{\infty
}$-close to a flat metric, and fix a prime $p$ and a nontrivial free
homotopy class $\beta $. If there is a class $\beta$ 
$g$-geodesic string with multiplicity divisible by $p$, 
then there is another one. We
also obtain arithmetic constraints on counts of closed geodesics in
mapping tori and flat bundles, and constraints on the existence of
negative sectional curvature metrics. These counts can be understood
as a shadow of a conjectural orbifold Morse homology of the
infinite-dimensional quotient stack $[LX/S^1]$.
\end{abstract}
\maketitle
% \tableofcontents 
\section{Introduction}
A \textbf{\emph{geodesic string}} will be a short name for 
the equivalence class of a closed, constant speed
$g$-geodesic up to reparametrization $S ^{1}$ action. 
Suppose for the moment $X$ is compact. For a nontrivial free homotopy class $\beta$ of loops in
$X$, we say that a metric $g$ on $X$ is $\beta $-\textbf{\emph{regular}} if all
of its class $\beta $ closed geodesics are nondegenerate
in the usual sense, or equivalently the closed orbits of the
associated geodesic flow are dynamically nondegenerate. 
For a $\beta $-regular $g$, with finitely many class $\beta
$ geodesic strings we define:
\begin{equation} \label{eq_Fgb}
F (g, \beta) = \displaystyle \sum_{o} \frac{(-1) ^{\operatorname {morse}  (o)}}
{\mult (o)} \in \mathbb{Q}, 
\end{equation}
where: 
\begin{itemize}
	\item The sum is over class $\beta$  $g$-geodesic strings
	$o$. 
	\item $\operatorname {morse} (o)$ 
denotes the Morse index of $o$, (meaning the Morse-Bott index
of the associated critical submanifold (diffeomorphic to $S
^{1}$) of the loop space).
\item $\mult (o) $ is the geometric multiplicity, equivalently
the order of the corresponding isotropy subgroup of $S
^{1}$,  where $S ^{1}$ is acting on the loop space by reparametrization.
\end{itemize}

We study invariance properties and computations of the above
$F$-count, and it's generalizations using the Fuller index
~\cite{cite_FullerIndex}. This also leads to some novel 
applications, particularly to arithmetic properties of 
geodesic multiplicities. One of the principal tools is a 
certain product type formula  for  the $F$-count. While the 
Fuller index itself is a very general construction in 
dynamical systems,  the product formula is a more subtle
phenomenon which needs the Riemann-Finsler geometric context.   
A detailed outline of Fuller's theory and basic
definitions are given here in the Appendix, so no prior
knowledge is necessary.

For a free homotopy class $\beta$ of loops in $X$, let
$L_{\beta}X$ denote the corresponding component of the free loop space. 
The following is a direct corollary of Theorem
\ref{theorem_Eulercharacteristic}, and exemplifies the
basic topological invariance of the $F$-count:
\begin{corollary} \label{cor_chi} Let $X,g, \beta $ be as above 
and such that $\beta$ is indivisible, (cannot be
represented by a multiply covered loop).
Then the $S ^{1}$-equivariant homology $H ^{S ^{1}} _{*} (L
_{\beta}X, \mathbb{Z})$ has finite total rank. Denote by $\chi ^{S ^{1}} (L _{\beta} X)$ the Euler characteristic of this homology.
Then 
\begin{equation*}
F (g, \beta ) =  \displaystyle \sum_{o} {(-1)
^{\operatorname {morse}  (o)}} = \chi ^{S ^{1}}(L _{\beta
}X),
\end{equation*}
(note $\mult (o) =1$ by the indivisible condition.) 
\end{corollary}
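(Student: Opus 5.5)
The plan is equivariant Morse--Bott theory for the energy functional $E$ on $L_\beta X$, with $S^1$ acting by reparametrization. Since $\beta$ is indivisible, no class $\beta$ loop is fixed by a nontrivial subgroup of $S^1$: a loop invariant under rotation by $1/k$ represents a $k$-th power class. So $S^1$ acts freely on $L_\beta X$, hence $H^{S^1}_*(L_\beta X,\mathbb{Z}) \cong H_*(L_\beta X/S^1,\mathbb{Z})$. Also every class $\beta$ geodesic string has $\mult(o)=1$, which gives the first equality.

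First, I use $\beta$-regularity and the finiteness of strings to identify the critical set. The critical set of $E$ on $L_\beta X$ is a finite disjoint union of critical circles $S^1\cdot \gamma_o$, one for each string $o$. Nondegeneracy of the closed geodesic $\gamma_o$ is exactly Morse--Bott nondegeneracy of this circle, with index $\operatorname{morse}(o)$. On the quotient $L_\beta X/S^1$, which is a Hilbert manifold because the action is free, $E$ descends to a Morse function with one critical point of index $\operatorname{morse}(o)$ for each $o$.

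Second, I run Morse theory on the quotient. Compactness of $X$ gives the Palais--Smale condition for $E$ on $L_\beta X$ (Klingenberg), and it descends to the quotient. Because $\beta$ is nontrivial, $E$ is bounded below by a positive constant on $L_\beta X$, so there are no constant loops to worry about. Passing each critical value attaches a cell of dimension $\operatorname{morse}(o)$, so $L_\beta X/S^1$ is homotopy equivalent to a CW complex with one cell per string, finitely many in total. Alternatively, the Morse complex with $\mathbb{Z}$ coefficients has one generator per critical point; orientation issues disappear in the alternating sum. Hence the homology has finite total rank, and
\[
\chi^{S^1}(L_\beta X)=\sum_o (-1)^{\operatorname{morse}(o)} = F(g,\beta).
\]

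The main obstacle is the infinite-dimensional Morse theory, namely the Palais--Smale property and the cell-attachment argument in the Finsler case, where $E$ is only $C^{1,1}$ along constant-speed loops. Since the paper states this as a corollary of Theorem \ref{theorem_Eulercharacteristic}, I would first simply invoke that theorem. It presumably identifies the Fuller-index-based count $F(g,\beta)$ with $\chi^{S^1}(L_\beta X)$ in general, with the rational weights $1/\mult(o)$ reflecting orbifold points of $[L_\beta X/S^1]$. I would then specialize it via the freeness observation above. If that theorem is unavailable in this form, I would fall back on the direct argument, using finite-dimensional approximation by broken geodesics to handle the regularity issues.
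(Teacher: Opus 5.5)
Your proposal is correct and is essentially the paper's argument. The corollary is Theorem \ref{theorem_Eulercharacteristic} applied to a $\beta$-regular $g$, which is automatically $\beta$-taut because $S(g,\beta)$ is a finite union of circles. In that case the Fuller-continuation step of that proof is vacuous (take $g'=g$). What remains is exactly your equivariant Morse--Bott cell count, together with $\mult(o)=1$ and the formula $i(\widetilde o)=(-1)^{\operatorname{morse}(o)}$, which identifies the Fuller-index definition of $F$ with \eqref{eq_Fgb}.

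One step needs repair. $L_\beta X/S^1$ is not a Hilbert manifold merely because the action is free. The reparametrization action on $H^1$-loops is only continuous: the orbit map $s\mapsto\gamma(\cdot+s)$ is not differentiable for a general $H^1$ loop. So $E$ does not literally descend to a Morse function on a manifold quotient.

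The paper avoids this by running Morse--Bott theory upstairs. Each individual rotation is a smooth isometry preserving $E$, so the negative gradient flow and the handle attachments are $S^1$-equivariant. Only afterwards does it pass to the quotient. There, freeness means the unstable disk bundle over each critical circle $S^1\cdot\gamma_o$, which is equivariantly $S^1\times D^{\operatorname{morse}(o)}$, collapses to a single cell of dimension $\operatorname{morse}(o)$. With that change your cell count, finite total rank and Euler characteristic identity go through unchanged.
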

The proof of the above is based on $S ^{1}$-equivariant
Morse theory.
For divisible classes, the homology theoretic interpretation of the $F$-count would require more than ordinary
$S^1$-equivariant Morse homology for the $g$-energy functional.
One would want a $g$-invariant up to quasi-isomorphism ``orbifold Morse
homology chain complex'' for the quotient stack
$[LX/S^1]$, together with (assuming certain finiteness) an algebraically derived
$\mathbb{Q} $-valued characteristic\footnote {I am told that
in this context a better term is von Neumann dimension.} coinciding with the
$F$-count. This is likely related to Wall-Euler
characteristic theory in group cohomology theory, which also
appears in the more basic Conjecture \ref{con_aspherical}.
Existence of this orbifold Morse homology is at the moment
unknown; for a finite dimensional candidate
see~\cite{cite_ChoHong2014OrbifoldMorse}.  

For a compact $X$, we say a metric $g$ is taut if the space of class $\beta $ closed
$g$-geodesics is compact for every $\beta $. (With the
topology induced by the $C ^{1}$ topology on the loop
space.) 
It is said to be $\beta $-taut if the same holds for
just $\beta $-class geodesics. 
A homotopy $\{g _{t}\} _{t
\in [0,1]}$  is
said to be $\beta $-taut if the space of pairs $(o,t)$  with
$o$  class $\beta $ $g _{t}$-geodesic is compact.
An example of a metric $g$, $\beta $-taut homotopic to the
flat metric is in Figure
\ref{fig:torus-beta-two-example}. We can construct such examples
which are arbitrarily ``far'' from being flat. Figure
\ref{fig:torus-beta-two-example} is also an example for the
following.

\begin{figure}[t]
    \centering
    \includegraphics[width=.9\textwidth]{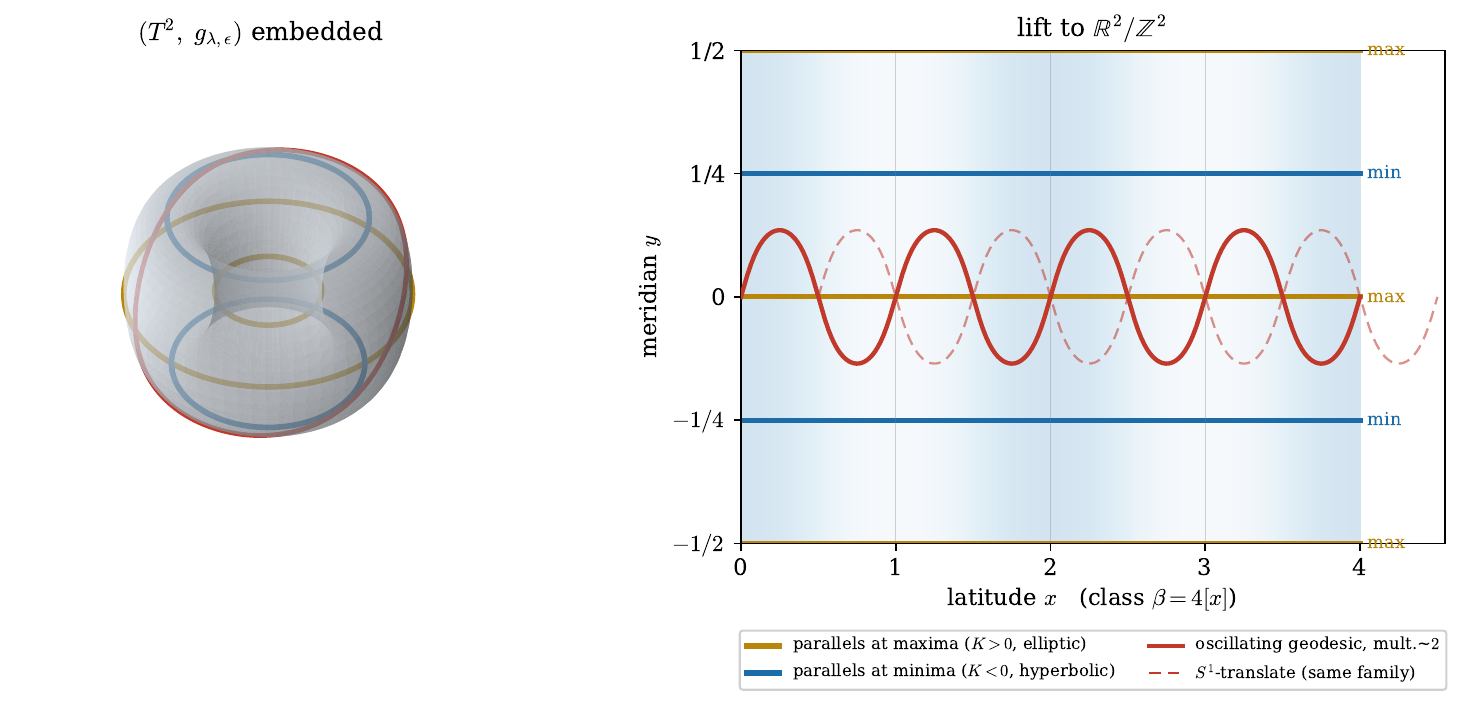}
    \caption{
    Schematic of a $\beta $-taut torus metric with mixed
		geodesic multiplicities in the class
    $\beta=4[x]$.  In coordinates
    \[
        T^2=\mathbb R^2/\mathbb Z^2,\qquad
        g_{\lambda,\epsilon}
        =
        dy^2+\lambda^2 f_\epsilon(y)^2\,dx^2,
        \qquad
        f_\epsilon(y)=1+\epsilon\cos(4\pi y),
    \]
    the $x$-direction is drawn as the latitude direction.  The four critical
    levels of $f_\epsilon$,
    \[
        y=0,\quad \frac14,\quad \frac12,\quad \frac34,
    \]
    give four geodesics; their 4-covers are class $\beta$
    geodesic strings of multiplicity $4$.  For
    $\lambda > (4 \sqrt {\epsilon(1+ \epsilon)}) ^{-1}$
		there are, in addition, two $S ^{1}$-families ($S ^{1}$
		here is acting by rotation in the $x$-direction) of
		non-horizontal oscillating geodesic strings in
    the same class $\beta$, one about each maximum of $f
		_{\epsilon }$, each of multiplicity $2$. This is
		in part classical Clairaut theory, see for instance
		~\cite{cite_AlexanderClosedGeodesicsRevolution},
		\cite[§4-4, Example 5]{cite_doCarmoCurvesSurfaces}. 
    After a small generic
    perturbation each family breaks into isolated strings
		cancelling in the $F$-count. There are also
		multiplicity-one oscillating $\beta $-class geodesics
		(one oscillation while winding four times), already for
		$\lambda > (8 \sqrt {\epsilon(1+ \epsilon)}) ^{-1}$;
		further multiplicity-one families appear as $\lambda $
		increases. In particular, we can
		see how Theorem \ref{thm_IntroTnH} is satisfied in this
		example taking $p=2$. To see the $\beta $-taut homotopy
		to the flat metric take $$g_s = dy^2
		+ \lambda^2\big(1+s\epsilon\cos(4\pi
		y)\big)^2 dx^2,\qquad s:1\to 0.$$ Each $g _{s}$ is
		Clairaut-integrable and tautness is then
		established by explicit tracking of geodesics. 
    }
    \label{fig:torus-beta-two-example}
\end{figure}
% \begin{figure}[h]
%  \includegraphics[width=2.0in]{torus.pdf}
% % \scalebox{.9}{\input{Tree.pdf}}
%  \caption {The class $\beta $ is represented by the drawn
%  curve.} \label{figure_torus}
% \end{figure}   

% Even this local form appears to be
% inaccessible to standard Morse theory techniques, 
% is a new phenomenon and is likely the most central application
% of the paper.   

% The following theorem in particular says that for regular
% metrics on $T ^{2}$ sufficiently nearby to the flat metric, fixed class geodesic strings with ``small length'' must come at
% least in pairs, corresponding to multiplicity constraints.
\begin{theorem} [Proof in Section \ref{sec_Proof of Theorem thm_IntroTnH}]\label{thm_IntroTnH}
Let $g$ be a $\beta $-regular  Riemann-Finsler metric on $T ^{n}$, $n>1$, $\beta $-taut homotopic to the flat metric for $\beta \in \pi _{1} (T ^{n})$ a nontrivial class.  % Suppose that $\beta \in \pi _{1} ^{inc} (X)$ is in the image
% of the inclusion $\pi _{1} ^{inc} (Z) \to \pi _{1} ^{inc} (Z
% \times Y)$.
Fix a prime $p$. Suppose there is a string $o$ such that:
\begin{enumerate}
\item $o$ is  $g$-geodesic. 
\item $o$ has class $\beta $.
\item $p | \mult (o) $.
\end {enumerate}
Then there is another string satisfying these conditions. 
\end{theorem}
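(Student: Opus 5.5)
We will show that the $F$-count of class $\beta$ strings, when regrouped by multiplicity, forces strings of multiplicity divisible by $p$ to cancel in pairs. The argument has three steps.

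\emph{Step 1: invariance.} Since $g$ is $\beta$-taut homotopic to the flat metric $g_0$, the Fuller index of the $\beta$-class periodic orbits of the geodesic flow is invariant along the homotopy. For a $\beta$-regular metric it equals $F(g,\beta)$. So $F(g,\beta) = F(g_0,\beta)$, provided we interpret the right-hand side via the Fuller index of the degenerate flat metric. For the flat metric the class $\beta$ closed geodesics form a single Morse--Bott family, a torus $T^n$ of parallel geodesics. Its Fuller index is computed from the Euler characteristic of the critical manifold mod $S^1$, that is, of $T^n/S^1 \cong T^{n-1}$, weighted by $1/\mult$. Since $n>1$, $\chi(T^{n-1})=0$. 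Hence
\[
F(g,\beta)=0.
\]
More generally, for every divisor $d$ of the divisibility of $\beta$ we get $F(g,\beta/d)=0$, using the same taut homotopy. Here we must check that tautness for $\beta$ implies tautness for $\beta/d$: a non-compact family of $\beta/d$ geodesics would give a non-compact family of their $d$-fold covers in class $\beta$.

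\emph{Step 2: the product formula and M\"obius inversion.} I would invoke the product type formula announced in the introduction. It expresses the count as a sum over divisors,
\[
F(g,\beta)=\sum_{d\mid \beta}\frac{1}{d}\,G_{\epsilon_d}(g,\beta/d),
\]
where $G(g,\gamma)$ is the signed count $\sum (-1)^{\operatorname{morse}}$ of \emph{primitive} strings of class $\gamma$. The sign $\epsilon_d$ accounts for the parity change of the Morse index under iteration; this is where the Riemann--Finsler setting enters, through Bott's iteration formula, and in particular through the index of even iterates of orientation-reversing (non-orientable unstable bundle) orbits. Since $F(g,\beta/d)=0$ for all $d$, M\"obius inversion gives, inductively over divisors, that the contribution of each multiplicity stratum vanishes. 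In particular, the signed count of class $\beta$ strings with multiplicity exactly $m$ vanishes, up to the sign corrections $\epsilon$.

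\emph{Step 3: the conclusion and the main obstacle.} A single string $o$ of multiplicity $m$ with $p\mid m$ contributes a nonzero term. For the vanishing above to hold there must be another string in some stratum with multiplicity divisible by $p$. To obtain the $p$-divisibility formulation cleanly, I would reduce mod $p$: multiply $F$ by the $p$-free part and compare $p$-adic valuations. The terms with $p\mid m$ are exactly those with negative $p$-adic valuation after normalization, so they cannot cancel against terms with $p\nmid m$.

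The main obstacle is the sign bookkeeping. Iterates of a primitive orbit can be degenerate or have bad (orientation-reversing) index parity, in which case they contribute zero to the Fuller index, and then the ``another string'' might fail to exist. The check I would do first is a $p$-adic valuation argument. The total $\sum_o (-1)^{\operatorname{morse}(o)}/\mult(o)=0$ is an integer combination. If exactly one string has $p\mid\mult$, then the term with the largest $p$-power in its multiplicity is the unique one of minimal valuation and cannot cancel. This is valid when that string has maximal $p$-valuation among all strings. Otherwise, there already exists another string with $p\mid \mult$. In either case a second string exists, so the conclusion is essentially the identity $F(g,\beta)=0$ plus this valuation count.
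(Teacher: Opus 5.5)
The argument that actually proves the statement in your proposal is Step 1 plus the $p$-adic valuation count in the last paragraph of Step 3. It is correct and has the same skeleton as the paper's proof:
\begin{itemize}
\item Taut deformation invariance (Theorem \ref{thm:invariantF}) gives $F(g,\beta)=F(g_0,\beta)=0$.
\item Compactness of the class $\beta$ string set together with $\beta$-regularity makes that set finite, so $F(g,\beta)=\sum_o(-1)^{\operatorname{morse}(o)}/\mult(o)$. You use this finiteness tacitly and should state it.
\item Arithmetic finishes the proof.
\end{itemize}

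The real difference is how you obtain $F(g_0,\beta)=0$. The paper applies its product formula, Theorem \ref{thm:EulerProduct}, to the $\beta$-periodic fibration $T^n\to S^1$ whose fiber is a totally geodesic $T^{n-1}$ containing $\beta$. There the zero comes from $\chi(S^1)=0$ in the base. You instead use a Morse--Bott formula for the Fuller index of the flat family, with the zero coming from the orbit space: $\chi(T^n/S^1)=\chi(T^{n-1})=0$. That formula does hold here: perturb by a function invariant along the $\beta$-direction and Morse on $T^{n-1}$, then check the Lefschetz signs of the surviving orbits. But the paper does not prove it, so you would have to supply that perturbation argument. The paper's route recycles a tool it needs elsewhere; yours is more direct for the flat torus.

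For the arithmetic, the paper clears denominators and reduces mod $p$. Your ultrametric argument is equivalent and shows slightly more: the largest power of $p$ dividing a multiplicity is attained by at least two strings.

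You should drop Step 2, the stratum-by-stratum reasoning in Step 3, and the worry about iterates contributing zero. Keep only the valuation count. The extension of Step 1 to the classes $\beta/d$ then also becomes unnecessary. The dropped material is not just superfluous but incorrect, for the following reasons.

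\begin{itemize}
\item The paper's product formula is the fibration formula $F(g,\beta)=(-1)^{\dim Y}\operatorname{card}(S_{y_0})\,\chi(Y)\,F(g_{y_0},\beta_Z)$, not a divisor sum. Regrouping $F(g,\beta)$ by multiplicity is a tautology.
\item M\"obius inversion cannot turn the vanishing of all $F(g,\beta/d)$ into vanishing of each multiplicity stratum, because the sign of an iterate $\gamma^d$ need not equal that of $\gamma$.
\item The strata are in fact not separately invariant. A period-doubling bifurcation of a primitive string $\gamma$ with $\gamma^2$ in class $\beta$ flips the sign of $\gamma^2$. This changes the multiplicity-two stratum by $\pm1$, compensated by a newly born primitive class $\beta$ string.
\item No string contributes zero. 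In Fuller's index every nondegenerate orbit string contributes $i(o)/\mult(o)$ with $i(o)=\pm1$; discarding bad orbits is a contact homology phenomenon. Degenerate class $\beta$ strings are ruled out by $\beta$-regularity.
\end{itemize}

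So there is no obstacle, and the valuation count applies directly to $F(g,\beta)=0$.
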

To prove the above we use the product
formula of Theorem \ref{thm:EulerProduct} to show that for
such $g$, $F (g, \beta )=0$. The conclusion then follows by
basic arithmetic.  

Using geodesic confinement based on KAM theory we show in Section \ref{sec_nearflat} the following:
\begin{theorem} \label{exm_tautT2} Any Finsler metric $g$ on $T ^{2}$ sufficiently $C
^{k}$-close to a given flat metric, for a universal $k$
(see Proposition \ref{prop_nearflat}), is
taut homotopic to the flat metric.
\end{theorem}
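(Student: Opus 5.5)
The plan is to build an explicit homotopy $g_s$, $s\in[0,1]$, from $g$ to the flat metric $g_0$, and to show it is $\beta$-taut for every nontrivial $\beta$ using KAM confinement. By Proposition \ref{prop_nearflat}, "sufficiently $C^k$-close" means the homotopy stays in a KAM neighborhood of $g_0$ uniformly in $s$.

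\textbf{Step 1: the homotopy.} Write the Finsler norms as $F_s = (1-s)F_0 + sF_g$ on each fiber, or interpolate the squared Hamiltonians $H_s = \tfrac12 F_s^{*2}$ on $T^*T^2$. Strong convexity is an open condition. So if $F_g$ is $C^k$-close to the flat norm $F_0$, every $F_s$ is a Finsler metric that is $C^k$-close to $F_0$, uniformly in $s$.

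\textbf{Step 2: KAM tori.} The geodesic flow of $g_0$ on the unit cotangent bundle $S^*T^2 \cong T^2 \times S^1$ is integrable and nondegenerate. Its invariant tori are $T^2 \times \{\theta\}$, and the rotation vector is the unit vector in direction $\theta$. The twist condition holds because the frequency map $\theta \mapsto$ direction is a local diffeomorphism; this is why homogeneity does not spoil it. By the KAM theorem for this isoenergetic problem, for $k$ large and $F_s$ close enough, Diophantine tori $T_\theta$ persist for a set of $\theta$ of nearly full measure in $S^1$. These tori are $C^1$-close to the flat ones and depend continuously on $s$, with constants uniform in $s$ by compactness of $[0,1]$.

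\textbf{Step 3: confinement and compactness.} On a $3$-manifold, each invariant $2$-torus separates the energy surface. Fix a class $\beta = m\gamma$ with $\gamma$ primitive, and let $\theta_\beta$ be the two directions of $\gamma$ in the flat picture. A closed $g_s$-geodesic of class $\beta$ has a lift that lies in some region of $S^*T^2$ bounded by KAM tori. It must lie in the thin annular region between two KAM tori with irrational directions that bracket $\theta_\beta$. Otherwise it would be trapped between tori whose rotation directions exclude the asymptotic direction $\beta/|\beta|$, and its homology class would contradict the rotation-number bounds for invariant sets squeezed between invariant tori.

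This trapping bounds the length of class-$\beta$ geodesics uniformly in $s$. The length is approximately $|\beta|_{g_0}$ up to a small error, since along the trapped region the speed direction is near $\theta_\beta$ and the displacement is $\beta$. With a uniform length bound, Arzelà–Ascoli together with the geodesic equation (ODE compactness in $C^1$, uniformly in $s$) shows that the set $\{(o,s)\}$ of class-$\beta$ $g_s$-geodesics is compact. This is exactly $\beta$-tautness of the homotopy, and it holds for all $\beta$ at once since the same tori serve for every direction.

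\textbf{Main obstacle.} The hardest step is the rotation-vector argument in Step 3: showing that a closed orbit of class $\beta$ cannot wander outside the bracketing region, and that it has bounded length. The difficulty is that lengths of class-$\beta$ geodesics could a priori be unbounded on a non-taut metric. I would handle this by lifting to the universal cover and using that a KAM torus with direction $\theta$ lifts to an invariant graph over $\mathbb{R}^2$ of direction field close to $\theta$. Velocities of orbits below that graph are confined to an arc of directions. A closed orbit of class $\beta$ then has average velocity direction $\beta/|\beta|$, which forces its speed directions into a small arc around $\theta_\beta$. Hence the displacement per unit length is bounded below, which gives the length bound $\lesssim |\beta|$.
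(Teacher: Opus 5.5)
Your proposal is correct and is essentially the paper's proof of Proposition \ref{prop_nearflat}. You use the same ingredients: graph-form isoenergetic KAM tori in Diophantine directions with small complementary gaps, confinement of the momentum angle to a thin band between a \emph{pair} of such tori, and the average-velocity estimate $T(1-W')\le|\beta|$, valid for every $\beta$ and every metric along a convex interpolation, followed by compactness. The one difference is that you get compactness directly by Arzel\`a--Ascoli from the uniform length bound, where the paper routes it through Theorem \ref{thm:notgeodesible}.

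One small correction: a single graph torus does \emph{not} separate $S^*T^2\cong T^2\times S^1$, since its complement is $T^2\times(0,1)$, and so ``below one graph'' in your last paragraph confines nothing. What does the work is the bracketing pair of tori around the orbit's initial angle, as you in fact use in Step 3.
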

A Finsler metric that is $\beta $-regular for all $\beta $ is $C
^{\infty}$-generic by the bumpy metric theorem, in the Riemannian case
due to Abraham \cite{cite_Abraham1970Bumpy} and Anosov
\cite{cite_Anosov1982Bumpy} and in the Finsler case to
Rademacher--Taimanov \cite{cite_RademacherTaimanov2020Finsler}; hence
the above yields the following. 
\begin{corollary} 
Let $g$ be a generic Finsler metric on $T ^{2}$
sufficiently $C ^{k}$-close to a flat metric, $k$ as in Theorem \ref{exm_tautT2}.  
% Suppose that $\beta \in \pi _{1} ^{inc} (X)$ is in the image
% of the inclusion $\pi _{1} ^{inc} (Z) \to \pi _{1} ^{inc} (Z
% \times Y)$.
Fix a prime $p$ and a nontrivial $\beta $. Suppose there is
a string $o$ such that:
\begin{enumerate}
\item $o$ is  $g$-geodesic. 
\item $o$ has class $\beta $.
\item $p | \mult (o) $.
\end {enumerate} 
Then there is another string satisfying these conditions. 
\end{corollary}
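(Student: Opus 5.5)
The corollary follows by combining Theorem \ref{exm_tautT2}, the bumpy metric theorem, and Theorem \ref{thm_IntroTnH}; the plan is to check that the hypotheses of the last theorem hold for a generic metric in the stated neighborhood.

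First we fix the neighborhood. Let $g_0$ be the given flat metric on $T^2$, and let $\mathcal{U}$ be the $C^k$-neighborhood of $g_0$ from Theorem \ref{exm_tautT2}. Every Finsler metric $g \in \mathcal{U}$ is taut homotopic to $g_0$. This means there is a homotopy $\{g_t\}$ from $g_0$ to $g$ which is $\beta$-taut for every nontrivial $\beta$.

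Next we impose genericity. By the bumpy metric theorem (Rademacher--Taimanov in the Finsler case), the set of Finsler metrics that are $\beta$-regular for all $\beta$ is residual in the $C^\infty$ topology. The set $\mathcal{U}$ is open in the $C^k$ topology, and the $C^\infty$ topology is finer, so $\mathcal{U}$ is also $C^\infty$-open. Hence the bumpy metrics in $\mathcal{U}$ form a residual subset of $\mathcal{U}$, and this is what ``generic'' means in the statement. Take $g$ in this subset and fix a prime $p$ and a nontrivial class $\beta$. Then $g$ is $\beta$-regular and $\beta$-taut homotopic to the flat metric, with $n=2>1$.

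Finally we apply Theorem \ref{thm_IntroTnH} to $g$, $\beta$ and $p$. It says that if one class-$\beta$ $g$-geodesic string has multiplicity divisible by $p$, then there is another such string. This is exactly the conclusion of the corollary.

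The main point to watch is the compatibility of topologies. The tautness neighborhood in Theorem \ref{exm_tautT2} is $C^k$, while the bumpy metric theorem gives genericity in the $C^\infty$ (or $C^r$ for large $r$) topology. The restriction argument above settles this, since an open set in a coarser topology stays open in a finer one. One should also confirm that ``taut homotopic'' in Theorem \ref{exm_tautT2} yields $\beta$-tautness for each fixed $\beta$, which is how the paper defines it. Apart from these checks, the argument is only a matter of matching hypotheses.
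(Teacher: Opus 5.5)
Your proposal is correct and is essentially the paper's own argument: Theorem~\ref{exm_tautT2} gives the taut homotopy to the flat metric, the bumpy metric theorem (Rademacher--Taimanov in the Finsler case) gives $\beta$-regularity for a $C^\infty$-generic metric in that neighborhood, and Theorem~\ref{thm_IntroTnH} then gives the second string. Your two checks are exactly the ones left implicit in the paper's one-line deduction: that the $C^k$-neighborhood remains open in the finer $C^\infty$ topology, and that a taut homotopy is $\beta$-taut for each fixed $\beta$.
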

Theorem \ref{exm_tautT2} settles the following question near
the flat metric; the global case is open, so at present the
hypothesis of Theorem \ref{thm_IntroTnH}, of being $\beta $-taut
homotopic to the flat metric, is verified only near the flat metric.
\begin{question} \label{que_T2global}
Is every $\beta $-taut metric on $T ^{2}$ $\beta $-taut
homotopic to the flat metric?
\end{question}

% In Section \ref{sec_Proof of Theorem thm_IntroTnH} we will
% give a more qualified form of this result which applies to
% metrics on $T ^{n}$ for all $n>1$. 
% \begin{question} \label{que_}
% Do $\beta $-taut
% metrics on $T ^{n}$ not $\beta $-taut homotopic to the flat
% metric exist?
% \end{question}
The above is not accessible to $S
^{1}$-equivariant Morse homology techniques, as the latter
doesn't suitably track isotropy information.  Nevertheless, there is a family resemblance with the resonance
identities of closed geodesic and closed characteristic theory, where
finite collections of periodic objects satisfy identities involving
average Euler characteristics and mean indices; see, for example,
~\cite{cite_Rademacher1989AverageIndices,
cite_HingstonRademacher2013Resonance} and the later noncontractible
resonance identities ~\cite{cite_LiuXiao2017ResonanceRPn}. The
constraints of the present paper are of a different character: the
resonance identities are real valued relations driven by the
asymptotics of the index, while ours are divisibility relations among
the multiplicities, driven by the vanishing of a rational count; in
particular they constrain the multiplicities modulo a prime, rather
than through indices or lengths. 

The near flat statements above extend to Reeb flows on the unit
cotangent bundle; the following is proved in Section
\ref{sec_ReebNearFlat}.
\begin{corollary}
\label{cor_reebintro}
There is a universal $k$ with the following property. Let $g_0$ be a
flat metric on $T^2$, let $C = S^*_{g_0}T^2$ with its canonical
contact form $\lambda_{g_0}$, and let $R$ be the Reeb vector field of
a contact form $f\lambda_{g_0}$ on $C$ with $\|f - 1\|_{C^k}$
sufficiently small. Then, in the flat trivialization $C
\cong T^2 \times S ^{1}$,
every closed $R$-orbit has the free homotopy class $(\beta, 0)$ of a
lifted flat geodesic, for some $\beta \neq 0$; in particular $R$ has
no contractible closed orbits. Moreover $R$ is taut homotopic,
through Reeb vector fields, to the flat geodesic Reeb field
$R^{\lambda_{g_0}}$; and if all class $(\beta,0)$ closed $R$-orbits
are nondegenerate and one class $(\beta, 0)$ orbit string $o$ satisfies $p \mid \mult(o)$
for a prime $p$, then there is another such orbit string.
\end{corollary}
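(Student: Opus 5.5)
The plan is to reduce Corollary \ref{cor_reebintro} to the geodesic statements, Theorem \ref{exm_tautT2} and Theorem \ref{thm_IntroTnH}, by reading the Reeb flow of $f\lambda_{g_0}$ as a nearly integrable flow on $T^2\times S^1$. Everything rests on KAM confinement: the flat geodesic Reeb field $R^{\lambda_{g_0}}$ is integrable, and its invariant $2$-tori are $T^2\times\{\theta\}$, carrying linear flow in direction $\theta$.

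First, the confinement. The flat Reeb flow on $C\cong T^2\times S^1$ preserves the angle $\theta$, and its frequency map $\theta\mapsto(\cos\theta,\sin\theta)$ is twisting in the sense needed for Moser's twist theorem. For this I would pass to a Poincar\'e section, or use the isoenergetic KAM theorem in dimension $3$. For $\|f-1\|_{C^k}$ small with $k$ universal, the Reeb field $R$ is a $C^{k-1}$-small perturbation of $R^{\lambda_{g_0}}$. KAM then supplies a dense-in-measure family of invariant $2$-tori, $C^1$-close to $T^2\times\{\theta\}$ for Diophantine $\theta$. Each such torus separates $C$, and consecutive tori trap every orbit in a thin slab $T^2\times[\theta_1,\theta_2]$. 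A closed orbit inside such a slab has its $S^1$-component of winding $0$, since it cannot cross the tori. Its $T^2$-projection moves in a direction close to $\theta\in[\theta_1,\theta_2]$ at unit-ish speed, so it cannot be contractible in $T^2$. This gives the class $(\beta,0)$ with $\beta\ne0$. I would present this as a repeat of the argument of Proposition \ref{prop_nearflat}, with the Finsler geodesic flow replaced by a general Reeb flow near the flat one; the key point is that the proof of Proposition \ref{prop_nearflat} only used that the flow is a small perturbation of the flat one in the exact-area-preserving (contact) category.

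Second, tautness. Consider the path $f_s=1+s(f-1)$ for $s\in[0,1]$. All the $f_s$ are uniformly $C^k$-close to $1$, so the KAM tori exist uniformly in $s$. For fixed $(\beta,0)$, the direction of the slope $\beta$ lies between two KAM tori at Diophantine angles, and the confinement step traps all class-$(\beta,0)$ orbits of every $R_{f_s}$ in that slab. The periods of these orbits are bounded above and below, since the period is comparable to the length $|\beta|$. Arzel\`a--Ascoli, applied to the compact family of flows, then shows that the set of pairs $(o,s)$ is compact. This is tautness of the homotopy.

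Third, the arithmetic conclusion. By Fuller invariance under taut homotopy, the $F$-count of $R$ in class $(\beta,0)$ equals that of $R^{\lambda_{g_0}}$, which is the geodesic flow of the flat metric in class $\beta$. By the proof of Theorem \ref{thm_IntroTnH}, that is, the product formula Theorem \ref{thm:EulerProduct} applied to the flat $T^2$, this count vanishes. Here I must interpret the Morse index sign as the Fuller/Lefschetz sign of the orbit, which agrees with the geodesic case. Then consider $\sum_o \pm 1/\mult(o)=0$ with exactly one term $o$ having $p\mid\mult(o)$. Multiply through by $L=\mathrm{lcm}$ of the multiplicities. Every term except $L/\mult(o)$ is divisible by the full $p$-power of $L$, so $L/\mult(o)$ must be as well, which is a contradiction. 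This argument works uniformly in the prime $p$.

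The main obstacle is the confinement step. The danger is that the published KAM confinement might depend on the specific geodesic structure, such as reversibility or fibrewise convexity. I would first try to verify that only the twist condition and the contact (hence exact) character of the return map are used. If a gap remains, I would fall back on the fact that any contact form $f\lambda_{g_0}$ near $\lambda_{g_0}$ is, up to a contactomorphism isotopic to the identity, the unit cotangent bundle of a Finsler metric near $g_0$, via Gray stability and fibrewise starshapedness. Then the corollary follows directly from Theorem \ref{exm_tautT2} together with the proof of Theorem \ref{thm_IntroTnH}, since a Finsler metric near $g_0$ falls under Theorem \ref{exm_tautT2}, not under any separate result on Reeb flows.
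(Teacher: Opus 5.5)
Your argument is correct. Your primary route differs from the paper's, while your fallback is essentially the paper's proof, minus an unnecessary step.

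\textbf{The paper's route.} The paper never reopens KAM in the Reeb category. Lemma \ref{lem_convexdictionary} observes that for $\|f-1\|_{C^2}$ small the radial graph $\Sigma_f=\{f(\xi)\xi\}$ is fiberwise strongly convex. Hence $F^*(t\xi)=t/f(\xi)$ is the dual norm of a possibly non-reversible Finsler metric $g_f$. By the fixed-$C$ convention $\lambda_g=(1/F^*_g|_C)\lambda_{g_0}$, one gets $\lambda_{g_f}=f\lambda_{g_0}$ exactly, so $R=R^{\lambda_{g_f}}$. No contactomorphism and no Gray stability are needed, since the contact structure does not change. The reversibility you worried about is never used in Section \ref{sec_nearflat}. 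The three conclusions then follow from results already proved:
\begin{itemize}
\item winding zero and $\beta\neq 0$ come from Lemma \ref{lem_confinement} together with the estimate $|\beta|\ge T(1-W')$;
\item tautness of the linear path $f_t$ comes from Proposition \ref{prop_nearflat} applied to $g_{f_t}$, plus the lift homeomorphism of Section \ref{subsec:lifted-geodesic-strings};
\item the divisibility claim is Theorem \ref{thm_IntroTnH} applied to $g_f$.
\end{itemize}

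\textbf{Your direct route.} You treat $R$ as the Hamiltonian flow of $H_f(t\xi)=t^2/(2f(\xi)^2)$ on $\Sigma_f$. You then rerun confinement, the period bounds and Fuller invariance directly for Reeb fields, much as the paper does for Theorem \ref{thm_ReebTn}. This works for three reasons. The isoenergetic KAM input and the confinement use only $C^{k_0}$-closeness of $H_f$ to $\tfrac12|p|^2$, and $\Sigma_f$ is a radial graph by construction. The Fuller sum needs only $i(o)=\pm1$ for nondegenerate orbits, and compactness plus nondegeneracy gives finiteness. Your lcm arithmetic is equivalent to the paper's product argument.

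\textbf{Comparison.} Your route buys independence from fiberwise convexity. Near the flat form, however, convexity is automatic, and your $H_f$ is literally $\tfrac12(F^*_{g_f})^2$. So the dictionary gives you everything for free, without re-verifying the KAM and compactness steps.

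\textbf{One correction.} A single graph torus does not separate $T^2\times S^1$; only a pair bounds a slab. That is what your trapping argument actually uses.
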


% Fuller's insight is that such a count  (in the more general
% context of orbits of dynamical systems) 
% These invariants can be
% directly interpreted as the untwisted part of certain elliptic Gromov-Witten invariants in an associated lcs manifold,
% ~\cite{cite_SavelyevEllipticCurvesLcs}. The twist is coming by
% way of metric isometries. 
% The latter count in modern
% terms is just an orbifold count of points of virtual
% dimension 0 Kuranishi spaces, corresponding to spaces of
% geodesic strings in a fixed free homotopy class. But
% Fuller's construction makes the latter elementary and
% geometric.
% Corollary \ref{cor_chi} above, and its parent Theorem
% \ref{theorem_Eulercharacteristic} lead us to conjecture full
% topological invariance of the $F$-count, see Conjecture \ref{con_topological} in
% Section \ref{sec_Definition of the invariant F}. The latter theorem is an important ingredient for some geometric applications concerning closed geodesic counts.

We will show that the count $F
(g,\beta ) \in \mathbb{Q} $ can be defined for any $\beta
$-taut $g$ (Section \ref{sec:Definition of F}). What
follows is a special version of the Conjecture
\ref{con_topological} (appearing further ahead) and can be understood as a divisible
analogue of Corollary \ref{cor_chi} in the aspherical case.
\begin{conjecture}[Aspherical topological formula for $F$]
\label{con_aspherical}
Let $X$ be a closed aspherical manifold, and let $\beta$  be
a nontrivial free homotopy class. Suppose that $X$ admits a $\beta$-taut metric.
Then the Borel construction
\[
        (L_\beta X)_{hS^1}:=L_\beta X\times_{S^1}ES^1
\]
is Wall-Euler finite, and for every $\beta$-taut metric $g$,
\[
        F(g,\beta)
        =
        \chi_{\operatorname{Wall}}\bigl((L_\beta X)_{hS^1}\bigr).
\]
Equivalently, set $G=\pi_1 (X)$ let $C_G(\gamma_\beta)$ be the centralizer of $\gamma _{\beta }$ in $G$ and
$Q_\beta=C_G(\gamma_\beta)/\langle\gamma_\beta\rangle$. 
If $\beta=[\gamma_\beta]$
(the conjugacy class of $\gamma _{\beta } \in G$), and if
then $Q_\beta$ is Wall-Euler finite and
\[
        F(g,\beta)=\chi_{\operatorname{Wall}}(Q_\beta).
\]
\end{conjecture} The conjecture implies that $F(g,\beta)$ is independent of the
$\beta$-taut metric $g$, for closed aspherical manifolds.
For background on rational Euler characteristics of groups, we refer to
Wall's original note~\cite{cite_Wall1961} and to Brown's treatment of Euler
characteristics of groups in~\cite[Chapter IX]{cite_Brown1982}; see also
Brown~\cite{cite_Brown1974} for Euler characteristics of discrete groups and
$G$-spaces.

Note that if we assume the above conjecture, then Theorem \ref{thm_IntroTnH}
holds for any $\beta $-regular $g$ with finitely many class $\beta $ $g$-geodesic
strings.

% Assuming the conjecture we get a generalization of
% Theorem \ref{thm_IntroTn}.
% \begin{theorem} [Proof in Section
% \ref{sec_proofofThmthm_introConjecture}]\label{thm_introConjecture}
% Assume the above conjecture, let $\beta \in
% \pi _{1} (T ^{n}) $,  $n>1$, be a nontrivial class  and let
% $g$ be a $\beta $-regular
% Finsler metric on $T ^{n}$ having finitely many $\beta
% $ class geodesic strings. 
% Suppose that $o$ is a $g$-geodesic
% string s.t.:
% \begin{enumerate}
% 	\item $o$ has class $\beta $.
% 	\item $p | \mult (o) $, where $p$ is prime,
% \end{enumerate}
% then there is at least one other such $o$ (in particular same $p$).
% \end{theorem}
% \begin{remark} \label{rem_reeb_conjecture}
% We can extend the above from metrics to Reeb vector fields  
% on the unit cotangent bundle, representing the standard contact
% structure. But now counting closed Reeb orbits in classes
% $\widetilde{\beta }$, lifting a class $\beta $ as above,
% (fix a constant speed parametrized representative and lift). 
% However, we need 
% a stronger form of the Conjecture \ref{con_topological},
% see Remark \ref{rem_Reeb}. 
% \end{remark}

% Recall that a now classical theorem of Preissmann
% ~\cite{cite_PreissmanNegativeCurvatureProducts} implies that
% there are no nontrivial compact products with a negative
% sectional curvature Riemannian metric. The following is one
% generalization. 
The following is a rather important open question in our
context:
\begin{question} \label{que_tautaspherical} Suppose that $X$ is closed,
does $\beta$-taut for some $\beta $ imply aspherical? 
\end{question}
\begin{proposition} \label{prop_infdivisible}
Let $X$ be a closed manifold and let $\beta \neq 1$ be a free
homotopy class which is infinitely divisible in $\pi _{1} (X)$: for
arbitrarily large $n \in \mathbb{N}$, a representative of $\beta $ in
$\pi _{1} (X)$ is an $n$-th power. Then $X$ admits no $\beta $-taut
metric.
\end{proposition}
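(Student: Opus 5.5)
The plan is to show directly that for any metric $g$ on $X$, the set of class $\beta$ closed $g$-geodesics is not compact. The mechanism is that infinite divisibility forces class $\beta$ geodesics that are $n$-fold covers of shorter loops with $n$ arbitrarily large, and this is incompatible with compactness.

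Fix a Riemann--Finsler metric $g$ on the closed manifold $X$. For each $n$ with $\beta = \alpha_n^n$ in $\pi_1(X)$ (up to conjugacy), the free homotopy class $[\alpha_n]$ is nontrivial, because $\beta\neq 1$. Since $X$ is compact, standard length minimization (Arzel\`a--Ascoli on a minimizing sequence of loops in the class, using compactness of $X$ and the positive injectivity radius) gives a closed $g$-geodesic $c_n$ in the class $[\alpha_n]$ minimizing length there. Its $n$-fold iterate $c_n^n$ is then a closed geodesic of class $\beta$. Its length is $n\,\ell(c_n)\geq n\,\ell_0$, where $\ell_0>0$ is the length of the shortest noncontractible closed geodesic. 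This minimum is positive: it is bounded below by twice the injectivity radius, and in the Finsler case by an analogous constant coming from compactness.

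So the set of class $\beta$ closed $g$-geodesics contains loops of unbounded length. I use the paper's convention that closed geodesics are constant speed, parametrized on $S^1$. Length is continuous with respect to the $C^1$ topology on the loop space, so any compact set of loops has bounded length. A family with $\ell(c_n^n)\to\infty$ therefore has no convergent subsequence, and the space of class $\beta$ geodesics is noncompact. Hence $g$ is not $\beta$-taut, and since $g$ was arbitrary, $X$ admits no $\beta$-taut metric.

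The main point needing care is the existence of the minimizing closed geodesic in each nontrivial free homotopy class, together with the uniform lower bound $\ell_0$. Both are classical for compact Riemannian manifolds (Cartan's theorem) and extend to Finsler metrics by the same Arzel\`a--Ascoli argument. It is also worth noting that "$\beta \neq 1$" guarantees $\alpha_n\neq 1$, which is exactly what keeps $c_n$ from being a constant loop.
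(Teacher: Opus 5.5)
Your proposal is correct and is essentially the paper's proof. You bound the length of every noncontractible loop below by the systole $\ell_0>0$, take a length-minimizing closed geodesic in the class of the $n$-th root $\alpha_n$, and note that its $n$-fold cover is a class $\beta$ geodesic of length at least $n\ell_0$; hence $S(g,\beta)$ has unbounded length and is not compact. Your remark that compactness in the $C^1$ topology forces a length bound spells out the final step, which the paper leaves implicit.
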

\begin{proof}
Take any metric $g$ and let $l>0$ be the systole of $g$, that is
$$l = \inf _{\{o \,|\, \text{o is noncontractible} \}} \ell
_{g} (o).$$ For $n$ as in the statement, write a representative of
$\beta $ as $h _{n} ^{n}$, with $h _{n}$ necessarily noncontractible
since $\beta \neq 1$. A length-minimizing closed geodesic in the free
homotopy class of $h _{n}$ exists as $X$ is closed, and its $n$-fold
cover is a class $\beta $ closed $g$-geodesic of length at least $n
\cdot l$. So the class $\beta $ closed geodesics of $g$ have unbounded
length and $g$ is not $\beta $-taut.
\end{proof}
\begin{remark} \label{rem_infdivisible}
A possible route to a counterexample for
Question~\ref{que_tautaspherical} is a mapping torus of a
non-aspherical $Z$ by a diffeomorphism with ``hyperbolic behavior''.
In the other direction, asphericity does not by itself help: by
results of Mess~\cite{cite_Mess1990} and
Belegradek~\cite[Corollary 5.1]{cite_Belegradek2006}, in every
dimension $n \geq 4$ there are closed aspherical $n$-manifolds whose
fundamental groups contain nontrivial infinitely divisible elements,
and hence, by Proposition \ref{prop_infdivisible}, classes $\beta $
for which no $\beta $-taut metric exists.
\end{remark}

\subsection{Geodesics in mapping tori and negative sectional
curvature} \label{sec_Curvature constrains}
A theorem of Preissmann ~\cite{cite_PreissmanNegativeCurvatureProducts} says that there are no negative sectional curvature metrics on compact products.
% for the $F$-count, whose basic corollary is the following. 
For fibrations, the most obvious analogue of Preissmann's theorem fails even
assuming compactness. 
By Thurston's fibered hyperbolization theorem, a closed surface bundle over
$S^1$ with fiber of negative Euler characteristic is
hyperbolic if and only if its monodromy is pseudo-Anosov; see
\cite{cite_ThurstonHyperbolicStructuresII,cite_ThurstonClassificationSurfaceDiffeo}.
% In fact,  by a classical result of Thurston
% every closed 3-manifold $X $, for which there is no injection
% $\mathbb{Z} ^{2} \to \pi _{1} (X, x _{0})$, and which
% fibers over a circle has a hyperbolic structure $g _{h}$.
What follows is a kind of partial, ``only if'' Finsler
analogue in higher dimensions, noting that an isometry of
a manifold has ``anti-Anosov behavior'' (for some
reasonable generalization of pseudo-Anosov to higher dimensions). 

We denote by $\pi _{1} ^{inc} (X)$ the set of free homotopy
classes of loops incompressible to the ends, in the sense of
Definition \ref{definition_boundaryincompressible},
(by convention nonconstant classes when $X$ is closed). 
\begin{corollary} [Corollary of Corollary
\ref{corollary_Thurston0}] \label{cor_isometry}
Let $X _{\phi }$ be the mapping torus of an isometry
$\phi: Z \to Z $ of a complete taut
Riemannian metric on $Z$, with $Z$ finite type, and where $\beta _{Z} \in \pi _{1} ^{inc} (Z)$ is indivisible. 
Let $\beta \in \pi _{1} ^{inc} (X _{\phi}) $ denote the
image of $\beta _{Z}$ under inclusion.   Then:
\begin{enumerate}
	\item $\chi ^{S ^{1}}(L _{\beta} X _{\phi } ) =0 $. 
	\item $X _{\phi}$ does not admit a forward complete,
$\beta $-regular Riemann-Finsler metric with an odd number
of class $\beta $ geodesic strings.
\item  $X _{\phi }$  does not admit a complete
negative sectional curvature Riemannian metric.
\end{enumerate}

\end{corollary}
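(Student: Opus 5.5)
The plan is to deduce all three items from the mapping torus result, Corollary \ref{corollary_Thurston0}, which I expect to say that the Fuller-type count vanishes, $F(\cdot,\beta)=0$, for any complete $\beta$-taut metric on $X_\phi$, with $\beta$ coming from an incompressible class of the fiber. The first task is therefore to produce such a metric together with a free $S^1$-action whose generator is a Killing field. Let $g_Z$ be the given taut metric on $Z$. Put on $Z\times\mathbb{R}$ the product metric $g_Z+dt^2$. Since $\phi$ is an isometry, this metric descends to a metric $g_\phi$ on $X_\phi$, and $g_\phi$ is complete.

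\emph{Tautness of $g_\phi$.} A class $\beta$ closed geodesic lies in a fiber class, so its lift has constant $t$-component. It is therefore a closed $g_Z$-geodesic in the class of a $\phi$-iterate of $\beta_Z$ (up to conjugacy), sitting at a constant height. Tautness of $g_Z$, the finite type assumption and incompressibility then give compactness of the space of such geodesics, up to the $t$-translation.

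\emph{The $S^1$-action.} The $t$-translation descends to a free $S^1$-action on $X_\phi$ by isometries, say of period $1$. It preserves every geodesic string. Every class $\beta$ geodesic therefore lies in a free $S^1$-family. After an $S^1$-invariant perturbation, or directly by a Fuller-index localization argument, the contribution of each family is the Fuller index of a circle of orbits, and this is $0$. This gives $F(g_\phi,\beta)=0$; I expect this is exactly Corollary \ref{corollary_Thurston0}. Since $\beta_Z$ is indivisible, so is $\beta$ in $X_\phi$: a root of $\beta$ would have to lie in the fiber group, because its image in $\mathbb{Z}$ is $0$. Theorem \ref{theorem_Eulercharacteristic}, the general form of Corollary \ref{cor_chi}, then identifies $F$ with $\chi^{S^1}(L_\beta X_\phi)$, which proves (1).

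\emph{Item (2).} Let $g$ be a forward complete $\beta$-regular metric with finitely many class $\beta$ strings; here the invariance of $F$ under taut homotopy, or directly Theorem \ref{theorem_Eulercharacteristic}, is used. Indivisibility gives $\mult(o)=1$ for every string, so
\[
F(g,\beta)=\sum_o (-1)^{\operatorname{morse}(o)}=\chi^{S^1}(L_\beta X_\phi)=0.
\]
A signed sum of $\pm1$ that vanishes has an even number of terms, so the number of class $\beta$ strings is even.

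\emph{Item (3).} Suppose $X_\phi$ carries a complete metric of negative sectional curvature. Then every class $\beta$ closed geodesic is nondegenerate. Moreover, in a nontrivial incompressible class the closed geodesic is unique, and indivisibility shows it is a single string. That is one string, an odd number, contradicting (2). Completeness and incompressibility to the ends are what guarantee existence of the geodesic in the noncompact case.

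The main obstacle is item (1), namely the vanishing of $F$ through the free isometric $S^1$-action. In particular one must make sure the hypotheses of Theorem \ref{theorem_Eulercharacteristic} (tautness, and incompressibility at the ends) hold for $g_\phi$ when $Z$ is noncompact.
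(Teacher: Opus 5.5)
\paragraph{The circle action in item (1).} There is a genuine gap in item (1), at the step where you produce the circle action. Take $X_\phi=Z\times\mathbb{R}/(z,t+1)\sim(\phi(z),t)$. The translation $(z,t)\mapsto(z,t+s)$ commutes with the gluing, so it descends to an isometric $\mathbb{R}$-action on $X_\phi$. However, its time-one map is $[z,t]\mapsto[z,t+1]=[\phi(z),t]$, the fiberwise monodromy, and not the identity.

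\begin{itemize}
\item If $\phi$ has finite order $m$, you get a circle action of period $m$. It is free only if no $\phi^j$ with $0<j<m$ has a fixed point.
\item If $\phi$ has infinite order, this construction gives no circle action at all. The hypotheses allow this, e.g.\ an irrational translation of a flat $T^2$.
\end{itemize}

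Even granting a free isometric circle action, the claim that ``the Fuller index of a circle of orbits is $0$'' is not established. The class $\beta$ strings of $g_\phi$ are the fiber $g_Z$-strings in the classes of the $\phi_*$-orbit of $\beta_Z$, at all heights. These fiber strings need not be isolated or nondegenerate. Moreover, an $S^1$-invariant perturbation can never make them isolated, since the action moves every string. So a genuinely non-equivariant localization is needed. You flagged this as the main obstacle, but the proposal does not overcome it.

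\paragraph{How the paper obtains the vanishing.} The paper uses no symmetry. By Example \ref{example_phi}, $(X_\phi,g_\phi)\to S^1$ is a $\beta$-periodic fibration. Its holonomy-finiteness comes from Lemma \ref{lem:isometry-finite-orbits-finite-type}, which is where finite type is really used. Tautness of $g_\phi$ alone only needs $\phi$ to be an isometry: every class $\beta$ geodesic then has length at most $\max_{S(g_Z,\beta_Z)}\ell$, and Lemma \ref{lem_SA} applies. The product formula, Theorem \ref{thm:EulerProduct}, then does the following:
\begin{itemize}
\item it perturbs $R^{\lambda_{g_\phi}}$ by $-\varepsilon\nabla_{g_S}(P+\widetilde f)$, with $f$ a Morse function on the base circle;
\item it localizes the index over $\operatorname{Crit}(f)$;
\item it gives $F(g_\phi,\beta)=(-1)^{\dim Y}\operatorname{card}(S_{y_0})\,\chi(S^1)\,F(g_Z,\beta_Z)=0$.
\end{itemize}

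\paragraph{What is correct.} With this replacing your $S^1$-argument, the rest of your proposal goes through.
\begin{itemize}
\item Your observation that $\beta$ is indivisible is correct: a root has zero image in $\pi_1(S^1)$, and $\pi_1(Z)\to\pi_1(X_\phi)$ is injective. This is exactly what is needed to apply Corollary \ref{corollary_Thurston0}(3), which is stated for indivisible $\beta$.
\item Your parity argument for (2) is correct.
\item Your deduction of (3) from (2) via the unique nondegenerate string is correct. The paper instead takes (3) from Corollary \ref{corollary_Thurston0}(1), i.e.\ Theorem \ref{thm:Fibration}.
\end{itemize}
In fact the statement is a direct specialization of Corollary \ref{corollary_Thurston0}, so citing it would have been enough.
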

\begin{example} \label{exm_surface} The corollary applies to
the mapping torus of a finite type surface of genus at least
one (the genus condition allows us to find a nontrivial class in $\pi _{1}
^{inc} (X _{\phi}) $, we also need a suitable taut
metric e.g. a nonpositive curvature metric, with an isometry). The conclusion that $X _{\phi}$ does not admit
a complete negative sectional curvature metric, follows from
Thurston's classification\footnote {Suitably adjusted, one
may also need finite geometry (volume) assumptions.}. However
the arithmetic constraints on geodesic counts seem
interesting even for a surface map.
% Technically, the more difficult
% component to the corollary is the arithmetic constraint on the number of
% $\beta $-class geodesic strings. Just the curvature
% constraint conclusion
% should be approachable by more standard ideas.
\end{example}

% \begin{proof} [Proof]
% This follows by Corollary \ref{corollary_Thurston0} once we
% note that the isometry group $\operatorname {Isom} $  of
% a closed Riemannian manifold is compact, hence $\pi _{0}
% \operatorname {Isom}   $ is finite.
% \end{proof}
% The first part of the theorem above can likely be proved directly via the 
% flat strip theorem ~\cite{cite_FlatStripEberleinOneil}, and
% other classical differential geometric tools. 
If we further assume that $Z$ is aspherical, the first part of Corollary \ref{cor_isometry} can
be proved via group cohomology theory, for which we already
provided some references.
But the main point is that in our setup the theorem has an elementary proof, once the
general product formula of
Theorem \ref{thm:EulerProduct} for the $F$-count is established. The proof of the latter 
does involve a tricky perturbation, but Fuller index theory
allows us to do this abstractly, perturbing a vector field
rather than the metric/energy functional.

% \begin{definition}\label{definition_Hadamardequivalent}
% We say that a pair $g _{0}, g _{1}$ of forward complete
% Finsler metrics with
% nonpositive flag curvature are
% \textbf{\emph{Hadamard equivalent}} if there is
% a continuous interpolation $\{g _{t}\}$, $t \in [0,1] $,
% (with respect to the topology of $C
% ^{0}$ convergence on compact sets) s.t. each $g _{t}$ has
% nonpositive flag curvature.
% \end{definition}

% \begin{theorem} \label{thm_}
% Let $\Sigma $ be as in Part 2 of Theorem
% \ref{thm_IntrojOOOOOoTn0}, or $\Sigma $ is as $Z$ in the
% theorem just above. 
% Suppose that $g$ is a Finsler metric on $M = \Sigma \times
% T ^{n}$ s.t. there are
% finitely many closed $\beta $ class geodesic strings in
% $M$ and they are all nondegenerate. Suppose
% further that $o$ is a $g$-geodesic string s.t.:
% \begin{enumerate}
% 	\item $o$ has class $\beta $.
% 	\item $p | \mult (o) $, where $p$ is prime.
% \end{enumerate}
% Then there is at least one other $g$-geodesic
% string satisfying these conditions.
%
% \end{theorem}

% (For example, $X,g _{0}$ in what follows can be
% any $\beta $-taut fibration as defined in Section
% \ref{example_indivisible}.) 
% My understanding is that reproducing such results with $S
% ^{1}$-equivariant homology computations will be extremely
% difficult.
% The condition $g$ being $\beta$-taut homotopic to $g _{st}$
% can be rephrased as 
\subsection*{Organization of the paper}
Section \ref{sec_Statement of results} sets up geodesic strings, taut
metrics and taut deformations, states the supporting theorems, and
contains the applications to sky catastrophes. Section
\ref{sec_Compactness preliminaries} proves the compactness results
underlying tautness, in particular Theorem \ref{thm:notgeodesible}.
Section \ref{sec_Some preliminaries on Reeb dynamics} collects
Reeb-flow preliminaries, and Section \ref{sec:Definition of F}
constructs the invariant $F$ and proves its taut deformation
invariance. Section \ref{sec_Proof of Theorem
theorem_Eulercharacteristic} proves the Euler characteristic formula
for indivisible classes, and Sections \ref{sec_Product Formula} and
\ref{sec:Proof of Theorem fibration} prove the product and fibration
formulas. Section \ref{sec_nearflat} proves the near flat rigidity
Proposition \ref{prop_nearflat} by KAM confinement, Section
\ref{sec_Proof of Theorem thm_IntroTnH} deduces the torus theorems of
the introduction, and Section \ref{sec_ReebNearFlat} extends the
near flat results to Reeb flows. Appendix \ref{appendix:Fuller}
develops the Fuller index and its continuation theory in the
noncompact setting, Appendix \ref{sec_A local fibered calculation}
performs a local fibered computation of $F$ used in the product
formula, and Appendix \ref{appendix_bluesky} contains the facts about
sky catastrophes used in Section \ref{sec_digression}.

\section{Setup and supporting theorems} \label{sec_Statement of results}
Proofs of many results here are postponed till subsequent
sections.
% We denote by $0
% \in \pi _{1} (X)$, the class of the constant loop. (For any connected component.)
% First helpful technical definitions. \begin{definition}\label{def_complete}
% A smooth manifold \textbf{\emph{with collared ends}} is
% a smooth $n$ manifold $X$ with a collection of topological embeddings
% $e _{i}: B _{i} \times (0,1) \to X $, $i \in I$, for
% topological spaces $B _{i}$ with the properties:
% \begin{enumerate}
% \item $$\mathcal{B} : = \{\image e _{i} ^{n}
% \,|\, i \in I, n \in \mathbb{N} - 0 \}$$ is a basis of end
% neighborhoods, where $e _{i} ^{n} = e _{i} | _{B _{i} \times
% (0, \frac{1}{n})}$.
% (That is each topological end neighborhood contains an element of $\mathcal{B}$.)
% \item  $X - \cup _{i} \image e _{i} ^{n _{i}}$ is compact for each sequence $\{n _{i}\} \subset \mathbb{N} -0$. 
% % \item  $\operatorname {core} (X) \subset  X$ is
% % a deformation retract of $X$.
% \end{enumerate}
% \end{definition}
\begin{terminology} From now on, all our metrics are
Riemann-Finsler (a.k.a. Finsler)
metrics unless specified to be Riemannian, and usually denoted by
just $g$. 
Completeness, always
means forward completeness, and it is an assumption for all
our metrics. Curvature always means sectional
curvature in the Riemannian case and flag curvature in the
Finsler case. Thus we will usually just
say complete metric $g$, for a forward complete
Riemann-Finsler metric.  	A reader may certainly choose to
interpret all metrics as Riemannian metrics, completeness
as standard completeness, and curvature as sectional
curvature. 
\end{terminology}
In what follows, $\pi_k(X)$ denotes the usual based homotopy group,
with basepoints suppressed when no confusion can arise.  We do not use
$\pi_1(X)$ to denote free homotopy classes.

\begin{definition}\label{definition_boundaryincompressible}
Let $X$ be a smooth manifold. Fix an exhaustion by
nested compact sets $\bigcup _{i \in \mathbb{N}} K _{i}
= X$, $K _{i} \supset K _{i-1} ^{\circ}$ for all $i \geq 1$. A free homotopy
class $\beta$ of loops in $X$ is called \textbf{\emph{end compressible}} if,
for every $i$, it has a representative contained in $X-K_i$.  We say that
$\beta$ is \textbf{\emph{end incompressible}} (or incompressible to the ends)
if it is not end compressible.
\end{definition}
Let $\pi _{1} ^{inc} (X)$ denote the
set of such end incompressible free homotopy classes. When $X$ is
compact, we set $\pi _{1} ^{inc} (X)$ to be the set of nonconstant free
homotopy classes of loops in $X$.

It is easily seen that the above is well defined
(independent of the choice of an exhaustion) and
moreover any homeomorphism $X _{1} \to X _{2}$ of
a pair of manifolds induces a set isomorphism $\pi _{1} ^{inc} (X _{1}) \to \pi _{1} ^{inc} (X _{2}) $.

Denote by $L _{\beta } X$ the class $\beta \in \pi _{1}
^{inc}(X) $ component of the free loop space of $X$, with
its compact open topology. 
Let $g$ be a complete metric on $X$, and let $S
(g, \beta) \subset L _{\beta } X$ denote the subspace of all
constant speed parametrized, smooth, closed $g$-geodesics in class $ \beta$.
\begin{definition} \label{definition_betataut}
We say that a metric $g$ on $X$ is \textbf{\emph{$\beta
$-taut}} if it is complete and $S (g, \beta )$ is compact. We will say that $g$ is \textbf{\emph{taut}} if
it is $\beta $-taut for each $\beta \in \pi _{1} ^{inc}
(X)$. 
\end{definition}
\begin{lemma} \label{lemma_nonpositiveistaut}
A complete metric $g$ with nonpositive curvature satisfies:
\begin{itemize}
	\item All of its closed geodesics are minimizing in their
	free homotopy class.
	\item It is taut.
\end{itemize}
\end{lemma}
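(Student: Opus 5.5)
The plan is to work in the universal cover and use the Cartan--Hadamard theorem together with convexity of the displacement function; this handles the first bullet. The second bullet will then follow from the first by a length bound and Arzelà--Ascoli.

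\emph{First bullet.} Let $\pi:\tilde X\to X$ be the universal cover with the lifted metric $\tilde g$. It is complete with nonpositive curvature, so by Cartan--Hadamard (valid for forward complete Finsler metrics with nonpositive flag curvature) $\exp_p$ is a diffeomorphism and geodesics between two points are unique. A free homotopy class $\beta$ corresponds to a conjugacy class of deck transformations. Fix $\gamma$ in this class and consider the displacement function $d_\gamma(x)=d_{\tilde g}(x,\gamma x)$.

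A closed geodesic $o$ in class $\beta$ lifts to a $\gamma$-invariant geodesic line $\tilde o$, i.e.\ an axis of $\gamma$, and $\ell(o)=d_\gamma(x)$ for every $x\in\tilde o$. Any loop $c$ in class $\beta$ lifts to a path from some $y$ to $\gamma y$, so $\ell(c)\ge d_\gamma(y)\ge \inf d_\gamma$. It therefore suffices to show that $d_\gamma$ attains its infimum along any axis.

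For this, take the straight geodesic $\sigma$ from $x\in\tilde o$ to $y$. By nonpositive curvature, $t\mapsto d(\sigma(t),\gamma\sigma(t))$ is convex, since the distance function $d(\sigma(t),\gamma\sigma(t))$ between two geodesics is convex. Along the axis, $d_\gamma$ is constant, and at $x$ the first variation of $d_\gamma$ in any direction vanishes. This holds because $x$ and $\gamma x$ both lie on the same geodesic $\tilde o$, and the variation terms at the two endpoints cancel by $\gamma$-equivariance. A convex function with zero derivative at $t=0$ is minimized there, so $d_\gamma(y)\ge d_\gamma(x)$, giving $\ell(c)\ge\ell(o)$. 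In the Finsler case I would need the forward and backward distances handled carefully; I expect this first-variation and convexity step in the Finsler setting to be the main technical point. For Riemannian metrics it is classical.

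\emph{Second bullet.} By the first bullet, all closed geodesics in class $\beta$ have the same length $L_\beta$, namely the minimal length in the class. Compactness of $S(g,\beta)$ then needs a uniform bound on where these geodesics sit. For $\beta\in\pi_1^{inc}(X)$, end incompressibility gives some $K_i$ that every representative meets. So each class $\beta$ closed geodesic is a constant speed loop of length $L_\beta$ passing through the compact set $K_i$. It therefore lies in the compact forward ball of radius $L_\beta$ around $K_i$, which is compact by forward completeness and Hopf--Rinow.

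These loops are uniformly Lipschitz, and they solve the geodesic ODE with initial data in a compact subset of $TX$. By Arzelà--Ascoli and continuous dependence for the geodesic ODE, any sequence has a subsequence converging in $C^1$ to a closed geodesic. The limit is still in class $\beta$, since $C^0$-close loops are homotopic. Hence $S(g,\beta)$ is compact. When $X$ is compact, the confinement step is automatic.
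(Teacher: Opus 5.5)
Your argument is complete for Riemannian metrics. The displacement-convexity and first-variation proof of the first bullet is a standard CAT(0) argument; the paper simply cites Cartan--Hadamard. Your second bullet follows essentially the paper's route via Corollary~\ref{lemma_compact}, i.e.\ Lemmas~\ref{lemma_containedinCompact} and~\ref{lem_SA}: all class $\beta$ geodesics have the minimal length, end incompressibility plus this length bound confines them to a compact set, and Arzel\`a--Ascoli finishes.

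The gap is the first bullet for Finsler metrics. These are covered by the statement under the paper's standing convention that metrics are Riemann--Finsler and curvature means flag curvature. You defer this case, but the step you would adapt is not merely delicate. Convexity of $t\mapsto d(\sigma_1(t),\sigma_2(t))$ for geodesics $\sigma_1,\sigma_2$ is Busemann convexity, and it is not implied by nonpositive flag curvature. By a theorem of Ivanov--Lytchak, a Busemann convex Finsler metric must be Berwald. So the convexity input is unavailable in general.

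What does work, using only Cartan--Hadamard, is a translation-length argument. On $\tilde X$ the exponential map is a diffeomorphism, so by forward Hopf--Rinow every geodesic segment is the unique minimizer between its endpoints. Hence $d(x,\gamma^n x)=n\,\ell(o)$ for $x\in\tilde o$. For arbitrary $y$, the forward triangle inequality and $\gamma$-invariance of $d$ give
\[
n\,d(y,\gamma y)\;\ge\; d(y,\gamma^n y)\;\ge\; d(x,\gamma^n x)-d(x,y)-d(y,x)\;=\;n\,\ell(o)-d(x,y)-d(y,x).
\]
Letting $n\to\infty$ gives $d(y,\gamma y)\ge\ell(o)$, hence $\ell(c)\ge\ell(o)$. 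This needs neither convexity nor first variation, and it handles the asymmetric distance automatically. With this replacement the rest of your proof goes through unchanged in the Finsler setting.
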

\begin{proof} [Proof]
The first part is a standard consequence of the
Cartan-Hadamard theorem. The second part follows by the
first part and Corollary \ref{lemma_compact}.
\end{proof}
\begin{definition}\label{def_tauthomotopy} Let 
$ \beta \in \pi_{1} ^{inc}(X) $, and let
$g _{0}, g _{1}$
be a pair of $\beta $-taut metrics on $X$. 
A \textbf{\emph{$\beta $-taut deformation}} between $g _{0}, g _{1}$, is a continuous (in the topology of $C ^{0}$
convergence on compact sets) family $\{g _{t}\}$, $t \in
[0,1]$ of complete metrics on $X$, 
such that $$S (\{g _{t}\}, \beta ) := \{(o,t) \in L _{\beta }X
\times [0,1] \,|\, o \in S (g _{t}, \beta )\}$$ is compact.
We say that $\{g _{t}\}$ is a \textbf{\emph{taut deformation}}  if it is $\beta $-taut for
each $ \beta  \in \pi _{1} ^{inc}(X)$.  The above definitions
of tautness are extended naturally to the case of a smooth fibration $X
\hookrightarrow  P \to [0,1]$, with a smooth fiber-wise family of metrics.
\end{definition}
A useful criterion for $\beta $-tautness is the following.
\begin{theorem} [Proof in Section
\ref{sec_proofthmnotgeodesible}] \label{thm:notgeodesible}
Let $\beta \in \pi _{1} ^{inc} (X) $. Let $\{g _{t}\}
_{t \in [0,1]}$ be a continuous family of complete
metrics on $X$.
Suppose that:
\begin{equation*}
\sup _{t} |\sup _{o \in S  (g _{t}, \beta )} \ell _{g _{t}}
(o) -  \inf _{o \in S  (g _{t}, \beta )} \ell _{g _{t}} (o)| < \infty, 
\end{equation*}
where $\ell _{g _{t}}$ is the length functional
with respect to $g _{t}$,
then $\{g _{t}\}$ is $\beta $-taut.
% \end{equation*}
% Moreover, $S (\{g _{t}\}, \beta)$ is compact.
\end{theorem}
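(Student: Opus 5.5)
Under the hypothesis, length bounds should give compactness through a uniform lower bound on lengths plus an Arzelà--Ascoli argument. Set $L(t) = \inf_{o \in S(g_t,\beta)} \ell_{g_t}(o)$ and let $C$ be the supremum in the statement. The plan is to bound $L(t)$ uniformly in $t$ and to confine all class $\beta$ geodesics to a fixed compact subset of $X$. Then $S(\{g_t\},\beta)$ is a closed subset of a compact family of constant speed curves, hence compact. (If $S(g_t,\beta)$ is empty for some $t$, the condition there is vacuous and that slice contributes nothing.)

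\emph{Step 1: uniform length bound.} Fix a representative loop $\gamma$ of $\beta$. Continuity of $\{g_t\}$ in the $C^0$-on-compacts topology, together with compactness of $[0,1]$ and of the image of $\gamma$, gives $\ell_{g_t}(\gamma) \le A$ for all $t$. I want to deduce $L(t) \le A$ whenever $S(g_t,\beta) \neq \emptyset$, i.e.\ that some closed geodesic of class $\beta$ is no longer than the shortest loop. This is clear if a minimizing geodesic exists. In general I would use the hypothesis differently: any $o \in S(g_t,\beta)$ has length at most $L(t)+C$. So it suffices to show $L(t)$ is bounded, and I would argue this by contradiction as follows.

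\emph{Step 2: confinement.} Since $\beta$ is end incompressible, there is an $i$ such that no representative of $\beta$ lies in $X - K_i$. Hence every class $\beta$ loop meets $K_i$. A $g_t$-geodesic of length at most $\Lambda$ that meets $K_i$ stays in the closed $g_t$-ball of radius $\Lambda$ about $K_i$. Forward completeness and Hopf--Rinow for Finsler metrics make this ball compact. Uniform $C^0$ comparability of the $g_t$ on compacts then yields a single compact set $K$ containing all such geodesics for all $t$. On $K$, the geodesic equations depend continuously on $t$, the speeds are bounded, and Arzelà--Ascoli applies: limits of $g_{t_n}$-geodesics are $g_t$-geodesics of class $\beta$. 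Closedness of $S(\{g_t\},\beta)$ follows, and with it compactness.

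\emph{Main obstacle: bounding $L(t)$.} This is the step I expect to be hardest. If $L(t_n) \to \infty$, then by the hypothesis every class $\beta$ geodesic for $g_{t_n}$ is long while all have comparable lengths. I would try to contradict this with a minimax/Lyusternik--Fet style argument, or with a continuation argument along $t$. The starting point is that the set of $t$ with $S(g_t,\beta)$ nonempty and $L$ bounded nearby is open, by the continuity of Step 2 and the Fuller-type persistence developed later. Closedness of that set would use the comparability hypothesis: geodesics cannot escape to infinite length while the spread of lengths stays below $C$, since the shortest one would have to run off to the ends, against end incompressibility. If this route stalls, I would fall back on the result the paper cites as Corollary~\ref{lemma_compact}, which gives compactness from an a priori length bound, reducing the whole statement to proving $\sup_t L(t) < \infty$.
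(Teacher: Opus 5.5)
You correctly reduce the theorem to a uniform bound $\sup_t L(t)<\infty$, but you leave that bound unproved, and it is essentially the whole content of the theorem. None of the routes you sketch supplies it.
\begin{itemize}
\item Fuller continuation (Lemma~\ref{lem_continuation}) only controls orbits inside a fixed isolating neighborhood. It preserves an index, and that index may vanish; it does for the flat torus in this paper. So it gives neither nonemptiness of $S(g_t,\beta)$ nor a bound on $L(t)$ for nearby $t$.
\item Your closedness heuristic misplaces end incompressibility: a shortest closed geodesic of large length need not run off to the ends.
\item The fallback only restates the problem. Also, Corollary~\ref{lemma_compact} concerns metrics all of whose class $\beta$ geodesics are minimizing; compactness from a length bound is Lemma~\ref{lem_SA}.
\end{itemize}

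The missing idea is Lemma~\ref{lem_existsgeod}: for every complete metric and every $\beta\in\pi_1^{inc}(X)$, there is a closed geodesic minimizing length over all of $L_\beta X$. You already have the tool for this. Apply your confinement argument to arbitrary loops rather than to geodesics. Every class $\beta$ loop meets a fixed $K_i$, so loops of length below $\inf_{L_\beta X}\ell_g+1$ lie in a fixed compact set (Lemma~\ref{lemma_containedinCompact}). A constant-speed minimizing sequence then has a uniformly convergent subsequence by Arzel\`a--Ascoli. The limit stays in class $\beta$, realizes the infimum by lower semicontinuity of length, and is therefore a closed geodesic. (The paper uses curve shortening instead of a minimizing sequence.)

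Consequently $S(g_t,\beta)\neq\emptyset$ for every $t$, and $L(t)\le\ell_{g_t}(\gamma)\le A$. So every $(o,t)\in S(\{g_t\},\beta)$ satisfies $\ell_{g_t}(o)\le A+C$, and your Step~2 finishes the proof. With this lemma in hand, your fixed-representative bound is slightly more direct than the paper's argument. The paper instead takes $t_k\to t_\infty$, a minimizer $o_\infty$ for $g_{t_\infty}$, and $C^0$-closeness of the metrics on its image. Both versions rest entirely on the existence of minimizers.

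Separately, your claim that limits of $g_{t_n}$-geodesics are $g_t$-geodesics needs the Christoffel symbols to converge. That requires continuity of $g_t$ in $C^1$ on compacts, not just $C^0$. The paper's ``slight modification of Lemma~\ref{lem_SA}'' glosses over the same point, so you should state that assumption explicitly.
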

For example, the hypothesis is trivially satisfied if $g
_{t}$ have the property that all their class $\beta
$ closed geodesics are minimizing in their homotopy  class.
\begin{corollary} \label{cor_taut}
If $g _{t}$, $t \in [0,1]$ have nonpositive 
curvature then $\{g _{t}\}$ is taut.
\end{corollary}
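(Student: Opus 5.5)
The plan is to apply Theorem \ref{thm:notgeodesible} directly. Given a continuous family $\{g_t\}_{t\in[0,1]}$ of complete metrics of nonpositive curvature and a class $\beta \in \pi_1^{inc}(X)$, it suffices to check the hypothesis
\begin{equation*}
\sup_t \Bigl|\sup_{o \in S(g_t,\beta)} \ell_{g_t}(o) - \inf_{o\in S(g_t,\beta)} \ell_{g_t}(o)\Bigr| < \infty .
\end{equation*}
By the first part of Lemma \ref{lemma_nonpositiveistaut}, for each fixed $t$ every closed $g_t$-geodesic in class $\beta$ is length minimizing in its free homotopy class. Hence all elements of $S(g_t,\beta)$ have the same length, namely the infimum of $g_t$-lengths of loops in class $\beta$. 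So the inner difference vanishes identically in $t$, and the supremum over $t$ is $0$. Theorem \ref{thm:notgeodesible} then gives that $\{g_t\}$ is $\beta$-taut. Since $\beta \in \pi_1^{inc}(X)$ was arbitrary, the family is taut in the sense of Definition \ref{def_tauthomotopy}.

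The only point needing care is the convention when $S(g_t,\beta)$ is empty for some $t$. If $S(g_t,\beta)=\emptyset$, the corresponding difference should be interpreted as $0$ (or such $t$ excluded), and this should be checked against how Theorem \ref{thm:notgeodesible} is formulated and proved. In the nonpositive curvature case this is harmless: emptiness only removes points from $S(\{g_t\},\beta)$, and compactness is decided by the proof of that theorem, which I would take as given.

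A secondary subtlety is the use of the Cartan--Hadamard minimizing property in the Finsler, forward complete setting. This is exactly what Lemma \ref{lemma_nonpositiveistaut} asserts, so I would simply invoke it. I expect no real obstacle; the work is all in Theorem \ref{thm:notgeodesible}.
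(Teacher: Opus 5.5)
Your proposal is correct and follows the paper's argument: the paper also deduces the corollary from Theorem \ref{thm:notgeodesible}, using the minimizing property in Lemma \ref{lemma_nonpositiveistaut} to make the length spread vanish. Your concern about an empty $S(g_t,\beta)$ does not arise, because Lemma \ref{lem_existsgeod} gives a minimizing closed geodesic in every end-incompressible class for each complete $g_t$.
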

\begin{proof} [Proof]
This follows by Theorem \ref{thm:notgeodesible} and by Lemma
\ref{lemma_nonpositiveistaut}. 
\end{proof}
% We give one such condition here by way of Theorem
% \ref{thm:notgeodesible}, which will follow shortly after
% some setup.
% \end{remark}

\subsubsection{The geodesic string counting invariant $F$} \label{sec_Definition of the invariant F}
Let $\mathcal{G} (X)$ be the set of equivalence classes of
taut metrics $g$, where $g _{0}$ is equivalent to $g _{1}$ whenever there is
a taut deformation between them.  We may denote an
equivalence class by its representative $g$ by a slight abuse
of notation. The following is proved in Section
\ref{sec:Definition of F}.

\begin{theorem} \label{thm:invariantF}
For each manifold $X$ there is a natural, nontrivial
functional: $$\operatorname {F}: \mathcal{G} (X) \times \pi
_{1} ^{inc} (X) \to \mathbb{Q}, $$  extending the definition
in \eqref{eq_Fgb}.
\end{theorem}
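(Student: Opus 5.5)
The plan is to define $F(g,\beta)$ as the Fuller index of the geodesic flow, restricted to the relevant compact set of closed orbits, and to get invariance from Fuller's homotopy invariance.

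\textbf{Setting up the flow.} Given a $\beta$-taut complete metric $g$, consider the geodesic vector field $R_g$ on the unit (co)sphere bundle $C = S^*_g X$, or equivalently the Reeb field of the canonical contact form. Closed $R_g$-orbits lifting class $\beta$ closed geodesics correspond bijectively to the geodesic strings in $S(g,\beta)/S^1$. We normalize parametrizations by period, i.e. by length. Tautness says $S(g,\beta)$ is compact. In particular the lengths of class $\beta$ geodesics are bounded, and the corresponding set $\mathcal{O}_\beta(R_g)\subset C\times(0,\infty)$ of pairs (orbit, period) is compact. It is also open-and-closed among all periodic orbits of $R_g$, since the free homotopy class is locally constant in the $C^1$ loop topology. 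This is exactly the input Fuller's construction needs: an isolated compact set of periodic orbits. It lies in a possibly noncompact manifold, which is why the noncompact version of the Fuller index from Appendix \ref{appendix:Fuller} is used.

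\textbf{Definition and agreement with \eqref{eq_Fgb}.} Set $F(g,\beta) := i(R_g, \mathcal{O}_\beta(R_g)) \in \mathbb{Q}$, the Fuller index. When all class $\beta$ orbits are nondegenerate and finite in number, Fuller's formula gives
\[
i = \sum_o \frac{\epsilon(o)}{\mult(o)},
\]
where $\epsilon(o)$ is the fixed point index sign of the Poincaré return map, namely $\operatorname{sign}\det(I - dP_o)$. For the step that this matches \eqref{eq_Fgb}, I need $\epsilon(o) = (-1)^{\operatorname{morse}(o)}$. This is the classical relation between the linearized Poincaré map on the energy hypersurface and the Hessian of the energy functional transverse to the $S^1$-orbit. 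It follows from the Morse index theorem, or Bott's iteration formula, together with the standard parity computation $\operatorname{sign}\det(I-dP) = (-1)^{\text{index}}$ for nondegenerate closed geodesics. In the Finsler case one uses the same argument via the Legendre transform.

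\textbf{Invariance.} Let $\{g_t\}$ be a taut deformation. I expect this to be the main obstacle: the $g_t$ are only $C^0$-continuous on compact sets, while Fuller invariance needs a continuous family of smooth vector fields. My approach is as follows. First identify all the $S^*_{g_t}X$ with a fixed sphere bundle by radial rescaling. Then, using compactness of $S(\{g_t\},\beta)$, choose a neighborhood $N$ of the compact set $\bigcup_t \mathcal{O}_\beta(R_{g_t})$ in $C\times(0,\infty)\times[0,1]$ whose boundary contains no periodic orbits. Fuller's homotopy invariance then gives $i(R_{g_0}) = i(R_{g_1})$. If the family is not regular enough, first smooth it in $t$ by approximation, keeping $N$ isolating, which is an open condition by compactness. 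Well-definedness on equivalence classes follows. Naturality under diffeomorphisms is immediate, since Fuller indices are preserved under conjugation.

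\textbf{Nontriviality.} A flat torus with $\beta$ indivisible gives, after a small bumpy perturbation that is taut-homotopic by tautness of nonpositive curvature families (Corollary \ref{cor_taut}), a nonzero value via Corollary \ref{cor_chi}. Alternatively, a hyperbolic surface has a unique geodesic in each class, giving $F = \pm 1$.
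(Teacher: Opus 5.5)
Your skeleton matches the paper's proof. You define $F(g,\beta)$ as the Fuller index of the compact open lifted block $\widetilde{\mathcal O}(g,\beta)$ on a fixed cosphere bundle, and you get invariance from basic invariance \eqref{eq_basicinvariance} applied to the lifted cobordism of a taut deformation. Three of your supporting claims are wrong, however.

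The most substantive is the parity identity $\operatorname{sign}\det(I-dP_o)=(-1)^{\operatorname{morse}(o)}$, which fails in even dimensions. On a hyperbolic surface, the geodesic in an indivisible class has Morse index $0$. Its return map, however, is a saddle with eigenvalues $\lambda,\lambda^{-1}>0$ (the unstable Jacobi field never vanishes). Hence $\det(I-dP_o)=(1-\lambda)(1-\lambda^{-1})<0$, and the fixed point index is $-1$. The correct statement comes from $\operatorname{sign}\det(I-P)=(-1)^{\mu_{\mathrm{CZ}}-(\dim X-1)}$ together with $\mu_{\mathrm{CZ}}\equiv\operatorname{morse}\pmod 2$. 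It reads $i(\widetilde o)=(-1)^{\operatorname{morse}(o)+\dim X-1}$ for classes along which $X$ is orientable, with an extra orientation-character sign otherwise. This sign depends only on $(X,\beta)$. So to genuinely extend \eqref{eq_Fgb}, you should define $F$ as that sign times the Fuller index; invariance and nontriviality are unaffected. The paper's appendix asserts the same uncorrected identity, so this is a slip you share with the source rather than a divergence from it.

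Second, your regularity patch does not close the gap you correctly identified. Isolation persists only under perturbations that are $C^0$-small on the Reeb fields (Lemma \ref{lem_isolation}), i.e.\ small on first derivatives of the metrics. Averaging a merely $C^0$-continuous family in $t$ does not make the first derivatives of the averaged metrics close to those of $g_t$, so the smoothed family need not stay isolated or even taut. The honest fix is to require taut deformations to be $C^1$-continuous on compact sets. Then $t\mapsto R^{\lambda_{g_t}}$ is a continuous family of vector fields, as \eqref{eq_basicinvariance} requires, and no smoothing is needed. This is what the paper's one-line appeal to basic invariance tacitly uses.

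Third, your flat-torus witness for nontriviality fails on two counts.
\begin{itemize}
\item Nonpositively curved metrics on $T^n$ are flat (by Gauss--Bonnet when $n=2$). So Corollary \ref{cor_taut} cannot connect the flat metric to a bumpy perturbation; on $T^2$ the paper needs KAM for this (Proposition \ref{prop_nearflat}).
\item The value is $0$ anyway, since $L_\beta T^n/S^1\simeq T^{n-1}$, in agreement with the product formula over $S^1$.
\end{itemize}
Your alternative, a negatively curved metric with a unique nondegenerate class $\beta$ geodesic string (Example \ref{exm_negativecurvatureFuller}), is the right witness.
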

For $g$ $\beta$-regular and $\beta $-taut  we have already defined $F (g,\beta
)$ at the beginning of the Introduction. For a general
$\beta $-taut $g$
we define $F (g, \beta )$ in terms of the Fuller index. 
% The value $\operatorname {F} (g, \beta )  $ is a certain
% weighted count of the set of closed  $g$-geodesic
% strings in class $\beta $.  
% But one must take care of exactly how to count, as in general this set should be
% understood as an orbifold or rather a Kuranishi space (as
% introduced by Fakaya-Ono ~\cite{cite_FukayaOnoArnoldandGW}), hence this is why $\operatorname {F} $ is
% $\mathbb{Q} $ valued. 
\begin{corollary} \label{cor_existsSky} Suppose
for a pair $g _{0}, g _{1}$ of $\beta $-taut metrics on $X$:
\begin{equation*}
F (g _{0}, \beta ) \neq F (g _{1}, \beta),
\end{equation*}
then any path $\{g _{t}\}$, connecting $g _{0},
g _{1}$, is not $\beta$-taut.  
\end{corollary}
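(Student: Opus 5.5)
This is the contrapositive of the invariance statement in Theorem \ref{thm:invariantF}. The plan is to argue by contradiction and use that $F(\cdot,\beta)$ is constant along $\beta$-taut deformations. The theorem is phrased for taut classes in $\mathcal{G}(X)$, but the construction in Section \ref{sec:Definition of F} is for a single class $\beta$. What we need is the $\beta$-wise version: if $\{g_t\}$ is a $\beta$-taut deformation between $\beta$-taut $g_0,g_1$, then $F(g_0,\beta)=F(g_1,\beta)$.

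Suppose, contrary to the claim, that some path $\{g_t\}_{t\in[0,1]}$ from $g_0$ to $g_1$ is $\beta$-taut, so $S(\{g_t\},\beta)$ is compact. We pass to the geodesic flows $R_t$, viewed as (Reeb) vector fields on the unit (co)sphere bundle, with parameter $t$. Compactness of $S(\{g_t\},\beta)$ gives two things:
\begin{itemize}
\item a compact subset of the parametrized space of class-$\beta$ closed orbits of $R_t$;
\item a uniform bound on their periods, namely on the lengths of the geodesics.
\end{itemize}
This is exactly an admissible (isolated) homotopy in the sense of Fuller continuation theory, as developed in the noncompact setting in Appendix \ref{appendix:Fuller}. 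By the definition of $F$ through the Fuller index, together with the Fuller continuation/homotopy invariance property, the Fuller index of the class-$\beta$ orbit set of $R_0$ equals that of $R_1$. Hence $F(g_0,\beta)=F(g_1,\beta)$, which contradicts the hypothesis.

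The main obstacle is checking that $\beta$-tautness of the family really yields an isolating open set for the Fuller continuation. Concretely, we need a neighbourhood $U$ of the compact set of pairs $(o,t)$, taken in the loop space times $[0,1]$ with bounded period, whose closure contains no other class-$\beta$ periodic orbits of any $R_t$. Only compact-open or $C^0$ continuity of $g_t$ is assumed, so I would first replace the path by a nearby smooth $\beta$-taut path with the same endpoint values of $F$. This uses invariance of $F$ under small perturbations, which the definition in Section \ref{sec:Definition of F} should already provide. The isolating set then follows from compactness: class-$\beta$ orbits cannot escape to infinity or develop unbounded period without contradicting compactness of $S(\{g_t\},\beta)$.
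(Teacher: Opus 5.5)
Your proposal is correct and is the paper's argument: the corollary is the contrapositive of Theorem \ref{thm:invariantF}, whose proof applies the basic invariance \eqref{eq_basicinvariance} of the Fuller index to the lifted cobordism $\widetilde{\mathcal O}(\{g_t\},\beta)$. The isolating neighbourhood you flag as the main obstacle is not needed, since that cobordism is compact by $\beta$-tautness and automatically open in the orbit cobordism (the projected free homotopy class is locally constant); the paper also does no smoothing, simply treating $t\mapsto R^{\lambda_{g_t}}$ as a continuous family, and your claim that a nearby smooth path stays $\beta$-taut would in any case need its own argument, since small perturbations can create new long class-$\beta$ geodesics.
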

\begin{proof} [Proof]
The fact that any connecting $\{g _{t}\}$ is not $\beta $-taut is just a direct
corollary of the theorem above. 
\end{proof}
I was unable to find such a pair $g _{0}, g _{1}$, in fact
there is strong evidence to believe the following:
\begin{conjecture} \label{con_topological}
$F$ does not depend on the choice of a smooth structure and taut metric. In other
words we have the following. Let $X$ be a topological
manifold,
define:
$$\mathcal{S} (X): \pi _{1}  ^{inc} (X) \to \mathbb{Q} \sqcup \{\infty \}
$$ by:
$$\mathcal{S} (X) (\beta) = \begin{cases}
	\infty, &\text{ if $X$ does not admit a smooth structure
	and a $\beta $-taut Finsler metric. }\\
  F (g, \beta), &\text{ if $g$ is a $\beta$-taut Finsler
	metric on $X$. }	
\end{cases},$$
then $\mathcal{S} $ is well-defined and hence determines
a topological invariant of topological manifolds. So if
$f: X _{1} \to X _{2}$ is a homeomorphism then $\mathcal{S}
(X _{1}) (\beta ) = \mathcal{S} (X _{2}) (f _{*} (\beta) )$.
\end{conjecture}
Theorem \ref{theorem_Eulercharacteristic} in the following
section partially proves this conjecture.
\begin{remark} \label{rem_fibration_examples}
The smooth manifold version of this conjecture, i.e. that
$\mathcal{S} (X)$ is a smooth manifold invariant, is implied
by the Reeb sky catastrophe conjecture described in the next section. However, the
above seems to be much more basic as will be apparent from
the proof of Theorem \ref{theorem_Eulercharacteristic}. 
\end{remark}

% \begin{remark} \label{rem_Reeb}  The formulation of the
% conjecture naturally
% extends to Reeb vector fields. That is let $\lambda $ be
% a contact form on the unit cotangent bundle $C$ of a smooth
% manifold $X$ (for simplicity closed), with $\lambda
% $ representing the Liouville contact structure (i.e. the
% standard contact structure). Assume that
% the space of closed, class $\widetilde{\beta }  $ $\lambda
% $-Reeb orbits on $C$ is compact. Then the Fuller index (See
% Section \ref{sec:Definition of F}) of
% this compact set is a topological invariant of $X$. 
% \end{remark}
\subsubsection{Applications to sky catastrophes}
\label{sec_digression}  
\begin{definition} [Preliminary] \label{def_preliminarysky}
A \textbf{\emph{sky catastrophe}} for a smooth family $\{X
_{t} \}$, $t \in [0,1]$, of nonvanishing vector fields on
a closed manifold $M$ is a continuous family of closed orbit
strings $\tau \mapsto o _{t _{\tau} }$, $o _{t _{\tau}
} $ is an orbit string of $X _{t _{\tau} } $, $\tau \in [0,
\infty)$, such that the period of $o _{t _{\tau} } $ is unbounded from above.
\end{definition}
Sky catastrophes first appeared in the work of
Fuller ~\cite{cite_FullerBlueSky}.  
One general definition appears in
~\cite{cite_SavelyevFuller}, which we review in the
Appendix \ref{appendix_bluesky}, the main point of this definition is that
there are no regularity assumptions on $\{X _{t}\}$.
The preliminary definition becomes equivalent to the more
general definition given certain regularity
conditions on the family $\{X _{t}\}$. 

The following corollary of Theorem \ref{thm:notgeodesible} may be of independent interest.
\begin{corollary} \label{cor_sky_no_geodesible}
Sky catastrophes of vector fields on closed manifolds are
not geodesible by metrics all of whose geodesics are
minimizing. That is, if a family $\{X _{t}\}$ has a sky
catastrophe, there is no family $\{g _{t}\}$ of metrics (all
of whose geodesics are minimizing in their homotopy class) such
that the orbits of $X _{t}$ are unit speed parametrized $g _{t}$-geodesics.
\end{corollary}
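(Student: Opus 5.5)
The plan is to argue by contradiction, using Theorem \ref{thm:notgeodesible}. Suppose $\{X_t\}$ has a sky catastrophe, and suppose $\{g_t\}$ is a family of metrics, continuous in $t$ and all of whose closed geodesics are minimizing in their free homotopy class, such that each orbit of $X_t$ is a unit speed parametrized $g_t$-geodesic. I will work first with the preliminary Definition \ref{def_preliminarysky}. That definition gives a continuous family of closed orbit strings $o_{t_\tau}$ of $X_{t_\tau}$, $\tau\in[0,\infty)$, whose periods are unbounded.

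First, I observe that continuity of $\tau\mapsto o_{t_\tau}$ in the loop space keeps all $o_{t_\tau}$ in a single free homotopy class $\beta$. This holds after rescaling each orbit to be parametrized by $[0,1]$, since the period is a continuous function of $\tau$. Each $o_{t_\tau}$ is a closed $g_{t_\tau}$-geodesic of class $\beta$, so it lies in $S(g_{t_\tau},\beta)$. Because it is unit speed, its $g_{t_\tau}$-length equals its period. Hence $\sup_\tau \ell_{g_{t_\tau}}(o_{t_\tau})=\infty$.

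Second, I use the minimizing hypothesis. Every class $\beta$ closed $g_t$-geodesic is minimizing, so all elements of $S(g_t,\beta)$ have the same length $m_t$. The quantity in Theorem \ref{thm:notgeodesible} is therefore identically $0$, and that theorem gives that $\{g_t\}$ is $\beta$-taut, i.e. $S(\{g_t\},\beta)$ is compact. (If $\beta$ were trivial, the orbits would be nonconstant contractible closed geodesics, which is impossible for minimizing geodesics. On a closed $M$, every nontrivial class lies in $\pi_1^{inc}$.) On this compact set the function $(o,t)\mapsto \ell_{g_t}(o)$ is continuous, because the topology is induced from $C^1$ on the loop space and $g_t$ varies continuously. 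So it is bounded, which contradicts the unbounded periods.

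The main obstacle is reconciling this with the general definition of sky catastrophe from the Appendix, which imposes no regularity on $\{X_t\}$. That definition amounts to saying that the closed orbits of $\{X_t\}$ with period in a given class form a set whose period is unbounded on a connected component, or that the set is noncompact in the appropriate sense. I would first try to show that any such definition forces a sequence $(o_n,t_n)$ of class $\beta$ orbits with periods tending to $\infty$. Each $o_n$ is then a $g_{t_n}$-geodesic of length equal to its period, and the same compactness argument applies. One point needs checking: continuity of $g_t$ only in $C^0$ on compacts must still give length continuity on $S(\{g_t\},\beta)$. I expect this is part of the content of Theorem \ref{thm:notgeodesible}'s proof, which already controls lengths via the same hypothesis.
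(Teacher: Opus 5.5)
Your argument is correct and is the intended one: the paper presents this as an immediate corollary of Theorem \ref{thm:notgeodesible}, whose length-spread hypothesis vanishes under the minimizing assumption. The uniform length bound you wanted to check is proved directly in that theorem's proof; even more simply, every class $\beta$ closed $g_t$-geodesic has length $\min_{L_\beta X}\ell_{g_t}\le\sup_t\ell_{g_t}(c)<\infty$ for any fixed loop $c$ in class $\beta$. For Definition \ref{def:bluesky} you need not extract a diverging sequence: bounded periods, together with the Arzel\`a--Ascoli and limiting-ODE argument of Lemma \ref{lem_isolation}, make the whole cobordism $\mathcal O(\{X_t\},\beta)$ compact, so it is itself a compact open set containing every endpoint orbit string.
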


Fuller at the end of ~\cite{cite_FullerIndex} has asked for
any metric conditions on vector fields to rule out sky
catastrophes. By the
above, nonpositivity of curvature is one such
condition. So this is a partial answer to his question.

% \begin{remark} \label{remark:geodesible}
The main conjecture in ~\cite{cite_SavelyevFuller} is that
geodesible, and more generally Reeb sky catastrophes ($\{X
_{t}\}$ are Reeb) are unstable, in the sense that there is
a $C ^{0}$ nearby family $\{X' _{t}\}$ which does not have
a sky catastrophe (at least in the fixed class $\beta $).
Let's call this \textbf{\emph{Reeb sky catastrophe
conjecture}}. The reason to conjecture such a thing, is that
a theorem in ~\cite{cite_SavelyevFuller} says that if $\{o
_{t _{\tau}}\} _{\tau}$ is a sky catastrophe for a family of
Reeb vector fields $\{X _{t}\}$ then the length of the
corresponding projection $\tau \mapsto t _{\tau}$ is infinite.
(Assuming some minor regularity on $\{o _{t _{\tau}}\}$.)
The latter suggests that the structure of such a sky
catastrophe is very fragile.
\begin{remark} \label{rem_sky}
The Reeb sky catastrophe conjecture implies the geodesible
Seifert conjecture (apparently still an open problem), by the main
result of ~\cite{cite_SavelyevFuller}. Hence, this is a subtle
question. 
% The qualitative structure of such
% potential geodesible or Reeb sky catastrophes is partially
% understood, ~\cite[Theorem 1.10]{cite_SavelyevFuller}.
% But
% this does not greatly aid constructing potential
% examples, which must be topologically/dynamically very complex, (there
% are necessarily infinitely many suitably synchronized bifurcation
% events).   
\end{remark}
The following is a stronger form of Corollary
\ref{cor_existsSky}.
\begin{theorem} \label{thm_sky_catastrophe} Suppose
for a pair $g _{0}, g _{1}$ of $\beta $-taut metrics on
a closed manifold $M$:
\begin{equation*}
F (g _{0}, \beta ) \neq F (g _{1}, \beta),
\end{equation*}
then any path $\{g _{t}\}$, connecting $g _{0},
g _{1}$, has a sky catastrophe, meaning that the
corresponding family $\{X _{t}\}$ of Reeb vector fields on
the unit cotangent bundle (cf. Section \ref{sec:Definition of F}) has a sky
catastrophe as in Definition \ref{def:bluesky}.
\end{theorem}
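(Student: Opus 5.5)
The plan is to argue by contradiction against the Fuller-index definition of $F$ and its continuation invariance. Let $\{g_t\}$ be any path from $g_0$ to $g_1$, and let $\{X_t\}$ be the associated family of Reeb (cogeodesic) vector fields on the unit cotangent bundle $C$ of $M$. Since $M$ is closed, $C$ is compact. Let $\widetilde{\beta}$ be the lifted free homotopy class of $\beta$ (constant-speed lifts of class $\beta$ loops). By construction in Section \ref{sec:Definition of F}, $F(g_i,\beta)$ is the Fuller index $i(X_i, \mathcal{O}_i)$, where $\mathcal{O}_i$ is the compact set of class $\widetilde{\beta}$ closed orbits of $X_i$. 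Tautness of $g_i$ is exactly what makes this set compact in the space of orbits with periods, i.e. periods bounded and bounded away from zero. Suppose the family $\{X_t\}$ has no sky catastrophe in the sense of Definition \ref{def:bluesky}. The aim is to produce an admissible compact ``cobordism'' of periodic orbits, so that the continuation theorem for the Fuller index in Appendix \ref{appendix:Fuller} gives $i(X_0,\mathcal{O}_0)=i(X_1,\mathcal{O}_1)$, a contradiction.

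The key steps, in order, are the following.

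\textbf{Step 1.} Consider the space $\mathcal{S}=\{(o,t)\}$ of class $\widetilde\beta$ closed $X_t$-orbits with their periods. Periods are bounded below uniformly in $t$, since $C$ is compact, $X_t$ is nonvanishing, and the class is nontrivial (the projection is a noncontractible loop, whose length is bounded below by the systoles of $g_t$, which vary continuously).

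\textbf{Step 2.} Use the definition of sky catastrophe from Appendix \ref{appendix_bluesky}. As recalled, it is formulated without regularity: roughly, it asks that there be no compact open-closed subset of $\mathcal{S}$ whose intersections with $t=0$ and $t=1$ are exactly $\mathcal{O}_0$ and $\mathcal{O}_1$. In its negation, the absence of a sky catastrophe yields a compact subset $K\subset\mathcal{S}$ that is open and closed in $\mathcal{S}$, with $K\cap\{t=i\}=\mathcal{O}_i$ for $i=0,1$. I would first check, at the level of that definition, that the end conditions hold: $K$ must contain all of $\mathcal{O}_i$ and nothing else at the endpoints. This uses that $\mathcal{O}_i$ is all of $S(g_i,\beta)$, which is compact by $\beta$-tautness, together with the lower period bound from Step 1.

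\textbf{Step 3.} Apply Fuller's homotopy invariance to the isolated compact family $K$. Since $K$ is open and closed, there is an open neighborhood $N$ of $K$ in $C\times(0,\infty)\times[0,1]$ containing no other periodic orbits. Hence the index $i(X_t, K_t)$ is independent of $t$, and $F(g_0,\beta)=F(g_1,\beta)$.

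The main obstacle is Step 2: matching the abstract sky catastrophe definition of \cite{cite_SavelyevFuller} with the compactness needed by the continuation theorem. In particular, one must ensure that orbits in $K$ cannot escape through period blowing up (excluded by compactness of $K$) or through collapse to zero period (excluded by Step 1). If the definition in the appendix is instead phrased via connected components or sequences with unbounded period, I would first try the following route. Decompose $\mathcal{S}$ restricted to bounded periods $\le P$; such a set is compact by Arzel\`a--Ascoli on compact $C$. Then use a Whyburn/Kuratowski-type separation lemma. If no clopen compact $K$ with the given ends exists, then for every $P$ some connected subset of $\mathcal{S}$ joins $\mathcal{O}_0\cup\mathcal{O}_1$ to the level $\{\text{period}=P\}$. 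Taking limits as $P\to\infty$ gives precisely a connected family with unbounded periods, which is a sky catastrophe.
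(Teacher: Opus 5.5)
Your argument is correct and is essentially the content of \cite[Theorem 3.2]{cite_SavelyevFuller}, which is all the paper's proof invokes. Negating Definition \ref{def:bluesky} puts each endpoint orbit string in some compact open subset of $\mathcal O(\{X_t\},\beta)$. Compactness of $\widetilde{\mathcal O}(g_0,\beta)\cup\widetilde{\mathcal O}(g_1,\beta)$, which comes from $\beta$-tautness, then lets you take a finite union of these sets whose endpoint slices are exactly the two lifted orbit sets. The basic invariance \eqref{eq_basicinvariance} then forces $F(g_0,\beta)=F(g_1,\beta)$, a contradiction.

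Definition \ref{def:bluesky} is pointwise: it asks whether each endpoint orbit string lies in some compact open set. So you should state this finite-subcover step explicitly, since it is the only place tautness is used. By contrast, the period lower bound of Step 1 and the Whyburn-type fallback are not needed.
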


\begin{proof} [Proof]
This directly follows by \cite[Theorem 3.2]{cite_SavelyevFuller}.
\end{proof}
So that if such a pair $g _{0}, g _{1}$ exists,
the Reeb sky catastrophe conjecture is disproved and in
a rather extreme fashion. 

% If the answer to the question above is `yes', then we
% should be able to use the above corollary to find
% nonpositively
% curved metrics which cannot be joined by a continuous family of
% nonpositively curved metrics.
% (Apparently, existence of such metrics is open.)
% So Theorem \ref{thm:invariantF} can theoretically be used to
% prove existence of geodesible sky catastrophes, disproving
% the main conjecture of ~\cite{cite_SavelyevFuller}.
\subsubsection{Basic results on the invariant $F$} \label{sec_Basic results on the invariant $F$}

\begin{definition}\label{definition_indivisible}
Let $\beta$ be a free homotopy class of loops in $X$.  Choose a representative
of $\beta$ and a base point $x_0$ on its image.  This choice determines an
element
\[
        \beta_{x_0}\in \pi_1(X,x_0)
\]
well-defined up to inner automorphism.
\begin{itemize}
 \item An element $\gamma\in\pi_1(X,x_0)$ is \textbf{\emph{indivisible}} if
 whenever $\gamma=\alpha^k$ for some $\alpha\in\pi_1(X,x_0)$ and $k>0$, then
 $k=1$.
 \item An element $\gamma\in\pi_1(X,x_0)$ is a $k$-\textbf{\emph{power}} if
 $\gamma=\alpha^k$, $k>1$, for some $\alpha$ which is not an $n$-power for any
 $n>1$.
 \item An element $\gamma\in\pi_1(X,x_0)$ is
 \textbf{\emph{atomic}} if $\gamma = \alpha ^{k}$, for some $k \geq 1$, with $\alpha $  indivisible.
\end{itemize}
We say that the free homotopy class $\beta$ is indivisible, respectively a
$k$-power, respectively atomic, if $\beta_{x_0}$ has the corresponding property
for one, equivalently any, such choice of $x_0$ and representative.  This is
well-defined because the corresponding notions are invariant under inner
automorphism.  Equivalently, $\beta$ is a $k$-power precisely when it admits a
smooth representative with multiplicity $k$ whose underlying
simple representative is indivisible.
\end{definition}
Note that a class $\beta \in \pi _{1} ^{inc} (X)$ is
indivisible iff any representative of this class is not
multiply covered.
\begin{example} \label{exm_negativecurvatureFuller} Let $g$ be
a Riemannian metric with negative sectional curvature on a closed
manifold $X$ and $ \beta
\in \pi _{1} ^{inc} (X)$ a class represented by a multiplicity $n$ closed
geodesic, then 
\begin{equation} \label{equation_frac1n}
F (g, \beta ) = \frac{1}{n}.
\end{equation}
In particular,
if $\beta $ is indivisible then $F (g, \beta ) = {1}$. More generally,
\eqref{equation_frac1n} holds whenever $g$ has a unique
and nondegenerate geodesic string in class $\beta $, where
nondegenerate is as in the Introduction.
\end{example}
If $\beta \in \pi _{1} ^{inc} (X)$ is indivisible, then it
is easy to see that the reparametrization $S ^{1}$
action on $L _{\beta }X$ is free (Section \ref{sec_Proof of Theorem theorem_Eulercharacteristic}), so that  $H ^{S ^{1}} _{*} (L
_{\beta}X, \mathbb{Z}) \simeq H  _{*} (L
_{\beta}X/ {S ^{1}}, \mathbb{Z}) $, where $H ^{S ^{1}} _{*} (L
_{\beta}X, \mathbb{Z})$ denotes the $S ^{1}$-equivariant
homology. Moreover, we have the following result proved in
Section \ref{sec_Proof of Theorem theorem_Eulercharacteristic}.
\begin{theorem} \label{theorem_Eulercharacteristic}
Suppose that $\beta \in \pi _{1} ^{inc} (X)$ is indivisible,
and $X$ admits a $\beta $-taut metric. Then $H ^{S ^{1}} _{*} (L
_{\beta}X, \mathbb{Z})$ has finite total rank.
Denote by $\chi ^{S ^{1}} (L _{\beta} X)$ the Euler characteristic of this homology.
Then for any $\beta $-taut metric $g$ on $X$:
\begin{equation*}
F (g, \beta ) = \chi ^{S ^{1}}(L _{\beta }X).
\end{equation*}
Consequently, $F(g,\beta)$ is independent of the choice of $\beta$-taut metric $g$, for indivisible $\beta$.
Furthermore, Conjecture \ref{con_topological} holds on the
subset of classes $\beta $ which are indivisible.
\end{theorem}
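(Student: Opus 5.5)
We will prove Theorem \ref{theorem_Eulercharacteristic} by $S^1$-equivariant Morse theory for the energy functional on $L_\beta X$, combined with the taut deformation invariance of $F$ (Theorem \ref{thm:invariantF}) and the generic perturbation step implicit in its construction via the Fuller index.

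\textbf{Step 1: freeness of the action.} Since $\beta$ is indivisible, no loop in class $\beta$ is a nontrivial cover. The isotropy of a loop $\gamma$ under reparametrization is a closed subgroup of $S^1$. It cannot be all of $S^1$, since then $\gamma$ would be constant, but $\beta$ is nontrivial. If it were $\mathbb{Z}/k$ with $k>1$, then $\gamma$ would factor through the $k$-fold cover $S^1\to S^1$, making $\beta$ a $k$-th power. So the action on $L_\beta X$ is free, and
\[
H^{S^1}_*(L_\beta X)\cong H_*(L_\beta X/S^1).
\]
Also every string has $\mult(o)=1$, so for regular $g$ the count \eqref{eq_Fgb} is the signed count $\sum_o(-1)^{\operatorname{morse}(o)}$.

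\textbf{Step 2: reduce to a regular metric.} Given a $\beta$-taut $g$, we perturb it, preserving tautness, to a $\beta$-regular $g'$ with finitely many class $\beta$ strings. Compactness of $S(g,\beta)$ is an open condition under small perturbations supported near the compact set where the geodesics lie: use Theorem \ref{thm:notgeodesible}, or the Fuller-index isolation, noting that nearby geodesics have nearby lengths. Apply a bumpy-metric perturbation localized on a compact region. By Theorem \ref{thm:invariantF}, $F(g,\beta)=F(g',\beta)$.

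\textbf{Step 3: Morse theory on the quotient.} The energy $E$ of $g'$ on the Hilbert manifold $\Lambda_\beta X$ of $H^1$ loops, which is $S^1$-equivariantly homotopy equivalent to $L_\beta X$, is $S^1$-invariant. Its critical set is a finite union of nondegenerate critical circles, and it descends to a Morse function on the Hilbert manifold $\Lambda_\beta X/S^1$, since the action is free and proper. Each critical circle becomes a nondegenerate point whose index equals the Morse--Bott index $\operatorname{morse}(o)$.

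\textbf{Step 4: Palais--Smale, the main obstacle.} On noncompact complete $X$ with $\beta$ end incompressible, Palais--Smale can fail globally, because a sequence may escape to the ends. We handle this using tautness and incompressibility. By the definition of end incompressibility, a loop of bounded energy that wanders into $X-K_i$ for large $i$ cannot represent $\beta$. So the sublevel sets $\{E\le c\}$ stay in a fixed compact region modulo the part near infinity, and we get Palais--Smale on each sublevel set. Tautness bounds the critical values by some $c_0$. For $c>c_0$ the sublevel set $\{E\le c\}$ is then an equivariant deformation retract of all of $\Lambda_\beta X$, via the gradient flow, which has no critical points above $c_0$. Completeness and the escape argument rule out flow lines running off to infinity.

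\textbf{Step 5: conclude.} Morse theory gives a finite CW model of $\{E\le c\}/S^1$ with one cell of dimension $\operatorname{morse}(o)$ per string. This yields finite total rank of $H_*(L_\beta X/S^1;\mathbb{Z})$ and
\[
\chi^{S^1}(L_\beta X)=\sum_o(-1)^{\operatorname{morse}(o)}=F(g',\beta)=F(g,\beta).
\]
The left side depends only on the homotopy type of $L_\beta X$, so $F$ is independent of $g$ and invariant under homeomorphisms preserving $\beta$. Since homeomorphisms act on $\pi_1^{inc}$ by the remark after Definition \ref{definition_boundaryincompressible}, this gives Conjecture \ref{con_topological} for indivisible classes.
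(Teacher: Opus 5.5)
\textbf{Gap in Step 2.} Your Step 1 and the overall equivariant Morse strategy match the paper, but Step 2 has a genuine gap, and Step 4 inherits it. You claim that a small bumpy perturbation $g'$ of a $\beta$-taut $g$ is again $\beta$-taut, with finitely many class $\beta$ strings, so that $F(g,\beta)=F(g',\beta)$ by Theorem \ref{thm:invariantF}.

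Nothing justifies this, and it is not a soft fact. A sequence $g_n\to g$ may have class $\beta$ closed geodesics $o_n$ with $\ell(o_n)\to\infty$. Such $o_n$ have no closed limit, so they do not contradict compactness of $S(g,\beta)$. The tools you cite do not help:
\begin{itemize}
\item Theorem \ref{thm:notgeodesible} presupposes a uniform bound on the length spread along the path, which is exactly what is in question.
\item Lemma \ref{lem_isolation} only controls orbits inside a bounded-length isolating neighborhood.
\item ``Nearby geodesics have nearby lengths'' only concerns geodesics close to the old ones.
\end{itemize}
The paper shows how delicate this is. Near the flat $T^2$, tautness of perturbations needs KAM confinement (Proposition \ref{prop_nearflat}), and for $T^n$, $n\ge 3$, it is stated to be open (Remark \ref{rem_nearflat_scope}). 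Also, Theorem \ref{thm:invariantF} needs a $\beta$-taut \emph{path} from $g$ to $g'$, not merely taut endpoints. Consequently, in Step 4 you cannot bound the critical values of $\mathcal E_{g'}$, nor retract all of $\Lambda_\beta X$ onto a $g'$-sublevel set.

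\textbf{The missing idea.} Never require global tautness of $g'$.
\begin{enumerate}
\item Use tautness only for $g$. Pick $E_0$ above all critical values of $\mathcal E_g$. Then $\{\mathcal E_g<E_0\}$ is an equivariant deformation retract of the loop space, with Palais--Smale coming from Lemma \ref{lemma_containedinCompact}, as in your Step 4.
\item Take $g'$ $\beta$-regular and $C^2$-close to $g$. Apply Fuller continuation (Lemmas \ref{lem_isolation} and \ref{lem_continuation}) in the isolating neighborhood of $\widetilde{\mathcal O}(g,\beta)$ cut out by a Sasaki length bound. This gives $F(g,\beta)=i(N_1,R^{\lambda_{g'}})$, where $N_1$ consists of the finitely many $g'$-strings of energy $<E'$.
\item Choose $a<E'<b$ above all critical values of $\mathcal E_g$ and below $E_0$. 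For $g'$ close enough, sandwich $\{\mathcal E_{g'}<E'\}$ between the $g$-sublevels $\{\mathcal E_g<a\}$ and $\{\mathcal E_g<b\}$. In that band $\mathcal E_g$ has no critical points and $\langle\nabla\mathcal E_g,\nabla\mathcal E_{g'}\rangle>0$, so $\{\mathcal E_{g'}<E'\}\hookrightarrow\{\mathcal E_g<b\}$ is an equivariant homotopy equivalence (Lemma \ref{lem_deformg'}).
\item Run Morse--Bott theory only on this $g'$-sublevel.
\end{enumerate}

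\textbf{A lesser point.} The reparametrization action on $H^1$ loops is only continuous. So $\Lambda_\beta X/S^1$ is not naturally a Hilbert manifold carrying a Morse function. Do the Morse--Bott handle decomposition equivariantly upstairs and then pass to the quotient, where each free critical circle contributes a single cell.
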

At least for surfaces $\beta $-taut metrics (distinct from
nonpositive curvature metrics) can be
constructed explicitly, recall the example of Figure
\ref{fig:torus-beta-two-example}.
We can use the above, and the product formula of Theorem
\ref{thm:EulerProduct} to get:
\begin{theorem} [Proof in
Section \ref{sec:Proof of Theorem fibration}] \label{theorem_valuesOfInvariant} Every
rational number has the form $F (g, \beta )$ for some
$\beta$-taut Riemannian $g$ on some compact manifold $X$ and
for some $\beta $.
\end{theorem}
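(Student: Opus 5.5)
We need every rational $q$ to arise as $F(g,\beta)$. The plan is to build one-parameter families of values from a few basic examples and then combine them multiplicatively and additively via the product formula of Theorem \ref{thm:EulerProduct}. Since that formula is stated only later, I will assume it has the expected shape: for a product $X_1 \times X_2$ with product metric and a class $(\beta_1,\beta_2)$, $F$ of the product is expressed through $F(g_1,\beta_1)$, $F(g_2,\beta_2)$ and the multiplicities, in a way that contributes a factor of the form $F(g_1,\beta_1)\cdot \chi$-type terms.

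\textbf{Step 1: the reciprocals $1/n$.} By Example \ref{exm_negativecurvatureFuller}, on a closed hyperbolic surface $\Sigma$ (negative curvature, hence taut by Lemma \ref{lemma_nonpositiveistaut}) with $\beta=\alpha^{n}$ and $\alpha$ indivisible, the unique geodesic string in class $\beta$ is the $n$-fold cover of the geodesic in class $\alpha$. It is nondegenerate, so $F(g,\beta)=1/n$. The same construction with $n=1$ gives the value $1$.

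\textbf{Step 2: integers and signs.} Theorem \ref{theorem_Eulercharacteristic} computes $F$ for indivisible classes as $\chi^{S^1}(L_\beta X)$, so I need indivisible examples with arbitrary integer values. Take $X=\Sigma\times Y$ with product metric, where $\Sigma$ is hyperbolic, $Y$ is a closed manifold with a nonpositively curved metric (so the product is taut by Corollary \ref{cor_taut}), and $\beta=(\alpha,0)$ with $\alpha$ indivisible. The class $\beta$ geodesics form the Morse–Bott family $\{\gamma_\alpha\}\times Y$. Perturbing by a Morse function on $Y$ (equivalently, computing $L_\beta X/S^1 \simeq (L_\alpha\Sigma/S^1)\times LY_0$, whose equivariant Euler characteristic is $\chi(Y)$ after retracting the contractible-loop component of $Y$ to constants) gives $F=\chi(Y)$. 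Since $\chi(Y)$ must be compatible with nonpositive curvature, I would use $Y$ a product of hyperbolic surfaces, which gives even integers of both signs, and handle odd integers as in Step 3. More generally, the product formula should give
\[
F\bigl(g_\Sigma\oplus g_Y,(\alpha^n,0)\bigr)=\frac{\chi(Y)}{n}.
\]
This is the key instance of Theorem \ref{thm:EulerProduct} I would invoke.

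\textbf{Step 3: arbitrary $m/n$ via additivity over disjoint pieces.} To reach all of $\mathbb{Q}$ I would take $X=\Sigma\times Y$ with $Y=S^1\times\Sigma'$ or similar, or use the formula above with $\chi(Y)=m$. If only even $m$ are available, I would instead write $q=m/n=2m/(2n)$ using $\Sigma$ with class $\alpha^{2n}$, which covers every rational. The value $0$ comes from $Y=T^2$. This sidesteps the parity problem entirely, since $\chi(Y)$ ranging over all even integers together with denominators $2n$ suffices. Every rational $m/n$ with $n>0$ is then realized by $\Sigma\times\Sigma_{h_1}\times\cdots$ chosen so that $\chi(Y)=2m$, with class $(\alpha^{2n},0)$.

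\textbf{Main obstacle.} The hard step is justifying $F=\chi(Y)/n$ for the degenerate Morse–Bott family $\{\gamma_\alpha^{n}\}\times Y$. This is exactly the content of the product formula, whose proof requires perturbing the geodesic vector field in the Fuller-index framework and the local fibered computation in the Appendix. I would try to take it directly from Theorem \ref{thm:EulerProduct}. Failing that, I would compute it by hand: tilt the Reeb field by the gradient of a Morse function $f$ on $Y$. Each critical point $y$ then gives an isolated orbit string of multiplicity $n$ with Fuller sign $(-1)^{\operatorname{ind} y}$ times the sign for $\gamma_\alpha^n$. Summing over critical points yields $\chi(Y)/n$, and the remaining task is to check that tautness survives the perturbation.
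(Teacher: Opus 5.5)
Your strategy matches the paper's. You realize $1/N$ by an $N$-fold covered closed geodesic on a hyperbolic surface (Example \ref{exm_negativecurvatureFuller}), then multiply by $(-1)^{\dim Y}\chi(Y)$ via Corollary \ref{cor_productformula}, with $Y$ a product of hyperbolic surfaces. Dropping the sign is harmless, since $\chi(Y)=0$ for closed odd-dimensional $Y$. Tautness comes from Lemma \ref{lemma_nonpositiveistaut} or from the corollary itself, not from Corollary \ref{cor_taut}. The paper works in two stages. First, $\Sigma_{p+1}\times\Sigma_2$ with a $2q$-fold covered class gives $-2p/(2q)=-p/q$. Second, an extra factor $\Sigma_2$ (with $\chi=-2$), applied to a realization of $-p/(2q)$, gives $p/q$.

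Your Step 3, however, rests on a false claim: Euler characteristics of products of closed hyperbolic surfaces do not range over all even integers. A product of $k$ of them has $\chi=(-2)^k\prod_i(h_i-1)$. So positive values occur only for even $k$, and they are multiples of $4$. Together with flat tori, the attainable values are $\{\dots,-6,-4,-2\}\cup\{0\}\cup\{4,8,12,\dots\}$.

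Now take a positive $q=m/n$ with $m$ odd, say $q=3$ or $q=3/5$. Your recipe needs $Y$ with $\chi(Y)=2m\equiv 2 \pmod 4$, and no such $Y$ exists among the manifolds you allow. So these values are not covered as written.

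The repair is immediate. For positive $q=m/n$, use the class $(\alpha^{4n},0)$ and $Y=\Sigma_2\times\Sigma_{m+1}$, so that $\chi(Y)=4m$ and $F=4m/(4n)=m/n$. This is exactly the manifold $\Sigma_2\times\Sigma_{p+1}\times\Sigma_2$ with a $4q$-fold covered class that the paper's second stage produces. Your treatment of negative values and of $0$ works as described.
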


\subsubsection{Applications to existence of negative curvature
metrics} \label{sec_Applications to existence of negative curvature
metrics}
We are going to give a certain generalization of Preissmann's
theorem to fibrations, with possibly noncompact fibers,
also replacing the negative sectional curvature condition by
a significantly weaker condition. Though the definitions may
at first seem hard to understand, we just want enough
generality to incorporate interesting ``flat bundle''
examples (with anti-Anosov or periodic holonomy behavior). The latter are of interest in 3-manifold theory, for instance vis-à-vis Thurston's classification 
~\cite{cite_ThurstonClassificationSurfaceDiffeo}.

\begin{definition}\label{definition_fiberclass}
For a fiber bundle $Z \hookrightarrow X \to Y$, we say that
$\beta\in\pi_1^{\mathrm{inc}}(X)$ is a \textbf{\emph{fiber class}} if it lies
in the image of the set map
\[
        i_Z:\pi_1^{\mathrm{inc}}(Z)\to\pi_1^{\mathrm{inc}}(X)
\]
induced by inclusion of a fiber.
\end{definition}

% In what follows $\pi _{k} (X)$ for may denote both the based
% homotopy groups (at suitable base points) and sets of free
% homotopy classes. But we disambiguate by saying groups and 
% homomorphism when talking about groups.

\begin{lemma}\label{lem_fiber-representatives-holonomy-orbit}
Let
\[
        Z\hookrightarrow X\xrightarrow{p}Y
\]
be a smooth fiber bundle with connected fiber and connected base.  Assume
\[
        \pi_2(Y)=0.
\]
Fix $y_0\in Y$, and let
\[
        H=\pi_1(Y,y_0)
\]
act on $\pi_1^{\mathrm{inc}}(Z_{y_0})$ by holonomy.  Let
$\beta\in\pi_1^{\mathrm{inc}}(X)$ be a fiber class, and define
\[
        \mathcal R_{y_0}(\beta)
        =
        \left\{
        \alpha\in\pi_1^{\mathrm{inc}}(Z_{y_0})
        \;\middle|\;
        (i_{y_0})_*(\alpha)=\beta
        \right\}.
\]
Then $\mathcal R_{y_0}(\beta)$ is a single $H$-orbit.  In particular, if
\[
        \beta_Z,\beta_Z'\in\mathcal R_{y_0}(\beta),
\]
then
\[
        H\cdot\beta_Z=H\cdot\beta_Z'.
\]
Thus the holonomy orbit associated to the fiber class $\beta$ is independent
of the choice of fiber representative $\beta_Z$.
\end{lemma}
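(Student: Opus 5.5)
The plan is to translate everything into group theory via the long exact homotopy sequence of the fibration, and then identify the holonomy action with conjugation in $\pi_1(X)$. Fix $z_0\in Z_{y_0}$ and write $K=\pi_1(Z_{y_0},z_0)$, $G=\pi_1(X,z_0)$, and $i=(i_{y_0})_*:K\to G$. The exact sequence
\[
\pi_2(Y,y_0)\to K\xrightarrow{\;i\;} G\xrightarrow{\;p_*\;} H\to \pi_0(Z_{y_0})
\]
together with $\pi_2(Y)=0$ and connectedness of $Z$ gives a short exact sequence $1\to K\to G\to H\to 1$. So $i$ is injective, $i(K)$ is normal in $G$, and $p_*$ is onto. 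Since $Z_{y_0}$ and $X$ are connected, free homotopy classes of loops in $Z_{y_0}$ (resp.\ $X$) are conjugacy classes in $K$ (resp.\ $G$). The map induced by inclusion on free classes sends the $K$-conjugacy class of $a$ to the $G$-conjugacy class of $i(a)$.

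Next I would describe the holonomy action algebraically. For $h\in H$, lift to $x\in G$ with $p_*(x)=h$; this is possible since $p_*$ is onto. Transporting a loop representing $[a]$ around a loop in $Y$ representing $h$, using a connection or the homotopy lifting property, gives a free homotopy in $X$ between the original loop and its transport. The transported loop, viewed in $Z_{y_0}$ and based at $z_0$, represents $i^{-1}(x\,i(a)\,x^{-1})$ up to $K$-conjugacy. Changing the lift $x$ by an element of $i(K)$ changes this only by an inner automorphism of $K$. Hence the holonomy action on $K$-conjugacy classes is well defined and given by
\[
h\cdot[a]=[\,i^{-1}(x\,i(a)\,x^{-1})\,].
\]
Here injectivity of $i$ is exactly what makes $i^{-1}$ meaningful. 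The action preserves end incompressibility, because holonomy is induced by diffeomorphisms of the fiber, and the paper notes that $\pi_1^{\mathrm{inc}}$ is natural under homeomorphisms. I expect this step, matching the geometric transport with conjugation by a lift, to be the main point needing care, mainly in keeping basepoints straight. I would handle it by transporting along a path in $X$ lying over the loop in $Y$ and closing up inside the fiber.

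With this description the lemma follows. First, $\mathcal R_{y_0}(\beta)$ is $H$-invariant: the transport above is a free homotopy in $X$, or algebraically $x\,i(a)\,x^{-1}$ is $G$-conjugate to $i(a)$, so $h\cdot\alpha$ still maps to $\beta$. Second, the action is transitive. If $[a],[b]\in\mathcal R_{y_0}(\beta)$, then $i(a)$ and $i(b)$ are conjugate in $G$, say $i(b)=x\,i(a)\,x^{-1}$. By the formula, $p_*(x)\cdot[a]=[i^{-1}(i(b))]=[b]$. Nonemptiness holds because $\beta$ is a fiber class. So $\mathcal R_{y_0}(\beta)$ is a single $H$-orbit, and the final assertions $H\cdot\beta_Z=H\cdot\beta_Z'$ are immediate.
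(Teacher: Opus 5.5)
Your proposal is correct and follows essentially the same route as the paper. Both arguments use the short exact sequence $1\to\pi_1(Z_{y_0})\to\pi_1(X)\to\pi_1(Y)\to 1$ coming from $\pi_2(Y)=0$. Both identify the holonomy action of $h$ with conjugation by a lift $x$ of $h$, up to inner automorphisms of the fiber group. Both then get transitivity from $\pi_1(X)$-conjugacy of the two fiber representatives, and invariance of $\mathcal R_{y_0}(\beta)$ from the converse direction. You are more explicit than the paper about two points: why injectivity is needed for the formula $h\cdot[a]=[\,i^{-1}(x\,i(a)\,x^{-1})\,]$ to be well defined, and why the action preserves end incompressibility. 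The underlying argument is the same.
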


\begin{proof}
Choose basepoints $z_0\in Z_{y_0}$ and $x_0=i_{y_0}(z_0)$.  Since
$\pi_2(Y)=0$, the homotopy long exact sequence gives an exact sequence of
based groups
\[
        1\to \pi_1(Z_{y_0},z_0)
        \xrightarrow{(i_{y_0})_*}
        \pi_1(X,x_0)
        \xrightarrow{p_*}
        \pi_1(Y,y_0)
        \to 1.
\]
Thus we identify $\pi_1(Z_{y_0},z_0)$ with a normal subgroup of
$\pi_1(X,x_0)$, and the quotient is $\pi_1(Y,y_0)$.

Let $\alpha,\alpha'\in\mathcal R_{y_0}(\beta)$.  Choose based representatives
of these fiber free homotopy classes.  Since they determine the same total
free homotopy class $\beta$ in $X$, the corresponding elements of
$\pi_1(X,x_0)$ are conjugate.  Thus, after conjugating inside
$\pi_1(Z_{y_0},z_0)$ if necessary, there exists $x\in\pi_1(X,x_0)$ such that
\[
        \alpha'=x\alpha x^{-1}.
\]
Let
\[
        h=p_*(x)\in H.
\]
Conjugation by $x$ on the normal subgroup $\pi_1(Z_{y_0},z_0)$ induces the
holonomy action of $h$, up to an inner automorphism of
$\pi_1(Z_{y_0},z_0)$.  Passing to free homotopy classes in the fiber, we get
\[
        \alpha'=h\cdot\alpha.
\]
Thus all fiber representatives of the total free homotopy class $\beta$ belong
to a single holonomy orbit.

Conversely, if $\alpha'=h\cdot\alpha$, choose $x\in\pi_1(X,x_0)$ with
$p_*(x)=h$.  Then $x\alpha x^{-1}$ represents the same total free homotopy
class in $X$ as $\alpha$, and its fiber free homotopy class is $h\cdot\alpha$.
Hence the holonomy orbit is exactly the set of fiber representatives of
$\beta$.
\end{proof}
We assume from now on that $Z,Y$ are connected.
For a fiber class $\beta\in\pi_1^{\mathrm{inc}}(X)$, set
\[
        \mathcal R(\beta)
        =
        \left\{ \alpha \in \pi_1^{\mathrm{inc}}(Z) \,\middle|\,
        (i_{Z})_*(\alpha)=\beta \right\}.
\]
Assume that $\pi_2(Y)=0$, and put
\[
        H=\pi_1(Y,y_0).
\]
By Lemma~\ref{lem_fiber-representatives-holonomy-orbit}, the holonomy orbit
\[
        S=H\cdot\beta_Z
\]
is independent of the choice of the fiber representative
$\beta_Z\in\mathcal R(\beta)$.  If another fiber inclusion is chosen, the
corresponding holonomy actions are abstractly conjugate.

\begin{definition} \label{def_betafinite}
Let
\[
        Z\hookrightarrow X\xrightarrow{p}Y
\]
be a smooth fiber bundle, with $Y$ and $\pi_2(Y)=0$, and let
$\beta\in\pi_1^{\mathrm{inc}}(X)$ be a fiber class.  We say that $p$ is
\textbf{\emph{$\beta$-holonomy-finite}} if for one, equivalently any,
$\beta_Z\in\mathcal R(\beta)$, the orbit
\[
        S=\pi_1(Y,y_0)\cdot\beta_Z
\]
is finite.
\end{definition}
\begin{definition}
Let
\[
        Z\hookrightarrow X\xrightarrow{p}Y
\]
be a smooth fiber bundle, and let $g$ be a Riemannian metric on $X$.
Let
\[
        T^{\mathrm{vert}}X=\ker(dp)
\]
and let
\[
        \pi^{\mathrm{vert}}:TX\to T^{\mathrm{vert}}X
\]
be the $g$-orthogonal projection.  We say that $p$ is
$g$-\textbf{\emph{parallel}}  if
\[
        \nabla \pi^{\mathrm{vert}}=0.
\]
Equivalently,
\[
        \nabla_A(\pi^{\mathrm{vert}}B)
        =
        \pi^{\mathrm{vert}}(\nabla_A B)
\]
for all vector fields $A,B$ on $X$.
\end{definition}

\begin{definition}\label{def_periodic}
Let $\beta\in\pi_1^{\mathrm{inc}}(X)$ be a fiber class, for a smooth fibration
\[
        p:Z \hookrightarrow (X,g)\to(Y,g_Y).
\]
We say that this is a \textbf{\emph{$\beta$-periodic fibration}} if:
\begin{enumerate}
        \item $g$ is a complete Riemannian metric.
				\item $Z,Y$ are connected and $\pi _{2} (Y) =0 $. 
        \item $p$ is $g$-parallel and is a Riemannian
				submersion.
        \item $p$ is $\beta$-holonomy-finite.
        \item the fiber metrics $g_{y}$ on $Z _{y}$ are  taut. 
				% $\alpha$-taut for every
				%     \[
				%             \alpha\in H_{y_0}\cdot\beta_Z.
				%     \]
\end{enumerate}
\end{definition}
The metrics $g,g _{Y}$ may be kept implicit.
In the above definition of a $\beta $-periodic fibration and the hypothesis of the following theorem we need
the auxiliary metric $g$ on $X$ to be Riemannian, and there is no obvious
extension of the theorem to the Riemann-Finsler case.
However, the conclusions of the theorem are for
Riemann-Finsler metrics. The following is perhaps the main
technical result of the paper, proved in Section
\ref{sec:Proof of Theorem fibration}.
\begin{theorem} \label{thm:Fibration}   Let $p: Z \hookrightarrow (X,g) \to
(Y, g _{Y})$ be a $\beta$-periodic fibration, where $ \beta \in
\pi_{1} ^{inc}(X)$ is a fiber class.
Suppose further that $Y$ is connected, 
closed, $\chi(Y) \neq \pm 1$ and is such that all smooth closed contractible $g _{Y}$-geodesics in $Y$ are constant. Then the following holds:
\begin{itemize}
	% \item  If $|\chi (Y)| > \operatorname {lcm} (1, \ldots, k)$,  where
	% $\operatorname {lcm} (1, \ldots, k)$ denotes the least
	% common multiple of $1, \ldots, k$, then $g$ cannot be
	% $\beta $-taut deformed to a Riemann-Finsler metric $g'$
	% on $X$ with a unique nondegenerate geodesic string in class $\beta$, in particular to a complete metric with negative curvature.  \label{part_1}
% \item If $\chi (Y) = 0$ then $F (g, \beta) =0$ and $X$ does not admit a complete Riemann-Finsler metric with negative curvature.
% Moreover, if $\beta$ is at most a $k$-power then $X$ does
% not admit a Riemann-Finsler metric with a unique and
% nondegenerate class $\beta $ geodesic string. \label{part_chi=0}
\item $X$ does not admit a complete Riemannian metric with negative curvature. 
\item $X$ does not admit
a forward complete Riemann-Finsler metric with
a unique nondegenerate class $\beta$ geodesic string. \label{part_chipm1}

\end{itemize} 
\end{theorem}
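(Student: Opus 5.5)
The plan is to compute $F(g,\beta)$ for the given $\beta$-periodic metric $g$ via the product/fibration formula, show that it has the form $\chi(Y)\cdot c$ with $c\in\mathbb{Q}$ controlled by the fiber, and then compare with the value forced by each forbidden metric. Those values are $F=1/n$ by Example \ref{exm_negativecurvatureFuller}, where $n$ is the multiplicity of the unique string. The transfer between $g$ and the hypothetical metric $g'$ uses the taut deformation invariance of Theorem \ref{thm:invariantF}.

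First, show that $g$ is $\beta$-taut and compute its class $\beta$ strings. Since $p$ is a $g$-parallel Riemannian submersion, a class $\beta$ closed geodesic projects to a closed $g_Y$-geodesic in $Y$. This projection is contractible, because $\beta$ is a fiber class and so $p_*\beta$ is trivial. By hypothesis it is therefore constant, so every class $\beta$ geodesic lies in a single fiber $Z_y$. In the parallel setting the fibers are totally geodesic, so these are $g_y$-geodesics. Their fiber classes lie in the finite holonomy orbit $S$ of Lemma \ref{lem_fiber-representatives-holonomy-orbit}. Tautness of the fiber metrics together with compactness of $Y$ then gives compactness of $S(g,\beta)$. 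The space of strings thus fibers over $Y$, with fiber $\bigsqcup_{\alpha\in S}S(g_y,\alpha)$, possibly twisted by holonomy.

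Next, apply the fibration/product formula (Theorem \ref{thm:EulerProduct} and the local fibered computation of Appendix \ref{sec_A local fibered calculation}) to perturb abstractly, at the level of the Reeb vector field and its Fuller index. The target is a formula
\[
F(g,\beta)=\chi(Y)\cdot\Bigl(\frac{1}{|S|}\sum_{\alpha\in S}F(g_{y_0},\alpha)\Bigr),
\]
or at least $F(g,\beta)=\chi(Y)\cdot q$, where $q$ has denominator dividing the multiplicities of $\beta$, say $q=a/m$ with $m\mid k$ when $\beta$ is a $k$-power. I expect the main obstacle to be this step. It requires a perturbation by a lifted gradient of a Morse function on $Y$ that localizes the orbits to fibers over critical points. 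Those fibers contribute with sign $(-1)^{\mathrm{ind}}$, which produces $\chi(Y)$, while the isotropy and multiplicity bookkeeping has to remain compatible with the holonomy.

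Finally, argue by contradiction. Suppose $g'$ is a complete metric with negative curvature, or more generally a metric with a unique nondegenerate class $\beta$ string. Its multiplicity $n$ is the $k$ for which $\beta$ is a $k$-power, since a minimizer's string is determined by the class. Then $F(g',\beta)=1/k$. Both $g$ and $g'$ lie in the relevant class (for negative curvature, use Corollary \ref{cor_taut} together with tautness of the straight-line family of complete metrics via Theorem \ref{thm:notgeodesible}). Invariance of $F$ then gives $1/k=\chi(Y)\,q$. The remaining task is a divisibility argument showing that this is impossible unless $\chi(Y)=\pm1$. The integrality of the fiber count has to be used here: after summing over $S$ and accounting for the $\mathbb{Z}/k$ isotropy, $kq\in\mathbb{Z}$, so $1=\chi(Y)\cdot(kq)$ forces $\chi(Y)=\pm1$, which is excluded. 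For $\chi(Y)=0$ this is immediate, since then $F(g,\beta)=0\neq1/k$. Negative curvature gives uniqueness and nondegeneracy of the string by Cartan--Hadamard, so the first bullet reduces to the second.
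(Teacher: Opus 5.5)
There is a genuine gap at the step where you transfer the value of $F$ from $g$ to the hypothetical metric $g'$.

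Theorem \ref{thm:invariantF} gives invariance of $F(\cdot,\beta)$ only along a $\beta$-taut deformation. Nothing in your argument produces one between the periodic-fibration metric $g$ and an arbitrary complete metric $g'$ with a unique nondegenerate class $\beta$ string. For divisible $\beta$, independence of $F(\cdot,\beta)$ from the $\beta$-taut metric is exactly the open Conjecture \ref{con_topological}. This is why, for instance, Theorem \ref{thm_IntroTnH} has to assume a $\beta$-taut homotopy to the flat metric.

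Your proposed remedy in the negative-curvature case also fails, for three reasons:
\begin{itemize}
\item $g$ itself need not have nonpositive curvature, since the fiber metrics are only assumed taut.
\item A convex combination of nonpositively curved metrics need not be nonpositively curved, so Corollary \ref{cor_taut} is unavailable.
\item Theorem \ref{thm:notgeodesible} would need a uniform bound on the length spread of class $\beta$ geodesics along the straight-line path, and you have no way to obtain one.
\end{itemize}
As written, your argument works only when $\beta$ is indivisible, and even then only if you replace the deformation by Theorem \ref{theorem_Eulercharacteristic}. Two smaller points: $kq\in\mathbb Z$ requires every multiplicity occurring in the fiber class to divide $k$, which is not automatic since roots in $\pi_1$ need not be unique; and for a general Finsler metric the value is $\pm 1/k$, not $1/k$.

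The missing idea is to pass to the indivisible root of $\beta$, where topological invariance is available. This is the paper's route:
\begin{enumerate}
\item Let $h$ have a unique nondegenerate class $\beta$ string $o_\beta$, and let $o_\alpha$ be the primitive string it covers, say $k$ times. Then $o_\alpha$ is the unique string in its class $\alpha$, since another one would have a second class $\beta$ string as its $k$-fold cover. It is nondegenerate because its $k$-fold cover is.
\item The class $\alpha$ is indivisible. If $\alpha=\rho^j$ with $j>1$, the $j$-fold cover of a class $\rho$ geodesic (Lemma \ref{lem_existsgeod}) would be a class $\alpha$ string of multiplicity at least $j$. By uniqueness it would equal $o_\alpha$, which has multiplicity one. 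Hence $F(h,\alpha)=\pm1$.
\item The hypothesis on contractible $g_Y$-geodesics makes $Y$ aspherical, so $\pi_1(Y)$ is torsion-free. Then $p_*(\alpha)^k=1$ forces $p_*(\alpha)=1$, so $\alpha$ is a fiber class.
\item Lemma \ref{lem_powerperiodic} makes $p$ an $\alpha$-periodic fibration. Theorem \ref{thm:EulerProduct} then gives $F(g,\alpha)=\pm\#S\cdot\chi(Y)\cdot\chi^{S^1}(L_{\alpha_Z}Z)\in\chi(Y)\mathbb Z$, which is $\neq\pm1$ because $\chi(Y)\neq\pm1$.
\item Theorem \ref{theorem_Eulercharacteristic} gives $F(h,\alpha)=F(g,\alpha)$ with no deformation needed, which is a contradiction.
\end{enumerate}
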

Note that $\chi(Y) \neq 1$ is of course essential, as  the
trivial fibration $X \to \{pt\}$, with $X$ admitting
a complete negatively curved metric, will satisfy the
hypothesis. The condition that there is a fiber class $\beta
\in \pi _{1} ^{inc} (X)$ is also essential; as any vector
bundle over a manifold admitting a Riemannian metric of
negative curvature also admits a metric of negative curvature, Anderson
~\cite{cite_AndersonNegativeCurvaturefibration}.

\begin{corollary} [Proof in Section \ref{sec:Proof of Theorem fibration}] \label{corollary_product}
Let $Y,Z$ be smooth manifolds, let $\beta_Z\in\pi_1^{\mathrm{inc}}(Z)$, and
assume that $\chi(Y)\neq\pm1$.  Suppose that $Y$ is closed and admits a metric
all of whose contractible geodesics are constant, for instance a nonpositive
curvature Riemannian metric, and suppose that $Z$ admits
a taut metric.  Let
\[
        X=Y\times Z
\]
with projection $X\to Y$, let $i:Z\to X$ be a fiber inclusion, and set
\[
        \beta=i_*(\beta_Z).
\]
Then:
\begin{itemize}
\item $X$ does not admit a complete Riemannian metric of negative sectional
curvature.
\item $X$ does not admit a forward complete Riemann-Finsler metric with a
unique and nondegenerate class $\beta$ geodesic string.
\end{itemize}
\end{corollary}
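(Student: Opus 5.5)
The plan is to deduce Corollary \ref{corollary_product} directly from Theorem \ref{thm:Fibration}, by checking that the product projection $p:X=Y\times Z\to Y$, with a suitable product metric, is a $\beta$-periodic fibration in the sense of Definition \ref{def_periodic}.

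First I fix the data. By hypothesis $Y$ is closed and carries a metric $g_Y$ all of whose contractible closed geodesics are constant. Since Theorem \ref{thm:Fibration} needs a Riemannian metric on $X$, I choose $g_Y$ Riemannian; the nonpositive curvature example is already Riemannian. The hypothesis ``$Z$ admits a taut metric'' is read as a complete Riemannian taut metric $g_Z$; if only a Finsler taut metric were available, this is a point to revisit. Put $g=g_Y\oplus g_Z$ on $X$. This metric is complete, since $Y$ is compact and $g_Z$ is complete.

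Next I verify the conditions of Definition \ref{def_periodic} one at a time.
\begin{enumerate}
\item $p$ is a Riemannian submersion, and it is $g$-parallel. The vertical distribution $TZ$ and the horizontal distribution $TY$ are both parallel for the product Levi-Civita connection, so $\nabla\pi^{\mathrm{vert}}=0$.
\item The fiber metrics $g_y=g_Z$ are taut by assumption.
\item Holonomy is trivial, because the bundle is a product. Hence the orbit $\pi_1(Y)\cdot\beta_Z=\{\beta_Z\}$ is finite, and $p$ is $\beta$-holonomy-finite.
\item $\beta$ is a fiber class by construction. I also need $\beta=i_*(\beta_Z)\in\pi_1^{\mathrm{inc}}(X)$. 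This follows by projecting to $Z$. If $\beta$ had representatives in $X-K_i$ for an exhaustion $K_i=Y\times L_i$, then projecting these to $Z$ would give representatives of $\beta_Z$ in $Z-L_i$, contradicting $\beta_Z\in\pi_1^{\mathrm{inc}}(Z)$.
\end{enumerate}

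The main obstacle is the condition $\pi_2(Y)=0$, which is not stated explicitly in the corollary. I will derive it from the geodesic hypothesis on $g_Y$. In the nonpositive curvature case it follows from Cartan--Hadamard, which makes $Y$ aspherical.

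In general the argument goes as follows. Suppose a nontrivial $\pi_2(Y)$ existed. Since $Y$ is closed, a Birkhoff min-max sweepout applied to a nontrivial class in $\pi_1(\Lambda Y,\Lambda^0Y)\cong\pi_2(Y)$ produces a nonconstant closed geodesic. That geodesic is contractible, since it lies in the component of contractible loops, and this contradicts the hypothesis. Hence $\pi_2(Y)=0$.

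With all the hypotheses in place, including connectedness, closedness of $Y$, and $\chi(Y)\neq\pm1$, Theorem \ref{thm:Fibration} applies. Its two conclusions are exactly the two bullet points of the corollary.
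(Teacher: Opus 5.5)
Your proposal is correct and takes the paper's route. The paper likewise puts the product metric $g_Y\oplus g_Z$ on $X$ (tacitly taking both factors Riemannian, as you do), declares $X\to Y$ a $\beta$-periodic fibration, and applies Theorem \ref{thm:Fibration}. Your extra checks fill in what the paper calls ``automatic'': the end-incompressibility of $\beta$ via projection to $Z$, and $\pi_2(Y)=0$ via min-max over $\pi_1(\Lambda Y,Y)\cong\pi_2(Y)$, which is the same Morse-theoretic fact the paper cites from Milnor inside the proof of Theorem \ref{thm:Fibration}.
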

\begin{example} \label{exm_} Take $Y$ any nonpositively
curved closed manifold and $Z$ a surface of genus at least
one, possibly infinite type.   
\end{example}
% As I understand, a weaker statement that $X$ does not admit
% a geometrically finite complete Riemannian metric of
% negative sectional curvature, follows by Preissmann's theorem
% ~\cite{cite_PreissmanNegativeCurvatureProducts} on Abelian subgroups
% of $\pi _{1} $ of a  negatively curved manifold.
This theorem is very close to being sharp, for example the
conclusion of the corollary is false if $Y=S ^1$ and $Z
= S ^1 \times \mathbb{R}  $. As $X = T ^{2} \times
\mathbb{R} ^{} $ admits the warped
product metric $g _{\mathbb{R}}  \times _{e ^{t}} g _{T ^{2}} $ (with respect to the function $f= e ^{t}$ on
$\mathbb{R} ^{} $) where $g _{T ^{2}}, g _{\mathbb{R} ^{} }$
are the flat metrics. This warped product has constant negative sectional curvature $-1$.
So it is essential that not only $\pi _{1} (Z)$ be
nontrivial but also that there is a class incompressible to the
ends. Of course, $\chi(Y) \neq 1$ is also
obviously essential, otherwise we may take $Y =pt$. The condition $\chi(Y) \neq -1$ is however
not obviously essential.

A further basic set of examples for the theorem is obtained
by starting with a Riemannian manifold $(Y, g _{Y})$, and any homomorphism 
\begin{equation} \label{equation_phi}
\phi: \pi _{1} (Y, y _{0}) \to \operatorname
{Isom} (Z, g _{Z}),  \quad \text{(the group of all isometries).} 
\end{equation}
where $g _{Z}$ is a taut Riemannian metric and $\beta _{Z}
\in \pi _{1} ^{inc} (Z) $. 
(For example $(Z,g _{Z})$ is a non-simply
connected complete hyperbolic surface of genus at least one).  Suppose further:
\begin{enumerate}
\item  The orbit 
\[
        \operatorname{Orb}(\beta_Z)
        =
        \left\{
        \phi_*(\gamma)(\beta_Z)
        \;\middle|\;
        \gamma\in\pi_1(Y,y_0)
        \right\}
\]
is finite.
\label{item_finiteorbit}
\item $Y$ is closed and connected.
\item All contractible smooth closed $g _{Y}$ geodesics in $Y$ are constant.
\end{enumerate}
We have the obvious induced diagonal action $$\pi
_{1} (Y, y _{0}) \to \operatorname {Diff}  (Z \times
\widetilde{Y}), \text{ (the group of all
diffeomorphisms)},$$ $$\gamma \mapsto ((z, y) \mapsto (\phi
(\gamma) (z), \gamma \cdot y)), $$
for $\widetilde{Y} $ the universal
cover of $Y$.  Taking
the quotient of $Z \times \widetilde{Y} $ by this action, we get an associated ``flat'' bundle $Z
\hookrightarrow X _{\phi} \xrightarrow{p} Y$, with a metric
$g _{\phi }$
induced from the product metric $\widetilde{g} = g _{Z}
\oplus g _{Y}$, on the covering space $q: Z \times
\widetilde{Y} \to Z \times Y$.  The natural projection $p:
X _{\phi } \to Y$ is then a Riemannian submersion and is
a $g _{\phi}$-parallel. The condition on the finiteness of
$\operatorname {Orb} $ implies that $p$ is $\beta $-holonomy
finite where $\beta $ is the image by $(i _{Z})_*: \pi _{1}
^{inc} (Z) \to \pi _{1} ^{inc} (X _{\phi }) $ of $\beta
_{Z}$. 
It follows that $p$ is a $\beta $-periodic fibration.
% \begin{corollary} [Corollary of Theorem \ref{thm:Fibration}] \label{lemma_betataut}
% Let $p: (X _{\phi}, g _{\phi }) \to (Y, g _{Y})$  be as above, then this is a $\beta $-periodic fibration.  
% \end{corollary}
Hence $p: (X _{\phi}, g _{\phi }) \to (Y, g _{Y})$
satisfies the hypothesis of Theorem \ref{thm:Fibration}.
\begin{example} \label{example_phi}
Suppose we have $\beta _{Z} \in \pi_{1} ^{inc}({Z})$, and 
let $\phi: Z \to Z$ be an isometry of a 
taut metric $g _{Z}$, such that 
$Orb$ is finite.
Then for $\beta = (i _{Z}) _* (\beta _{Z})$, by the construction above, the
mapping torus $$(Z, g _{Z}) \hookrightarrow  (X _{\phi},
g _{\phi})  \xrightarrow{\pi
_{} } S ^{1}$$  has the structure of a $\beta$-periodic
fibration,
satisfying the hypothesis of Theorem \ref{thm:Fibration}.  
\end{example}
The example can be made more concrete as follows.
\begin{lemma}\label{lem:isometry-finite-orbits-finite-type}
Let $Z$ be a complete Riemannian manifold of finite type.  Then every
isometry of $Z$ has finite orbits on $\pi_1^{\mathrm{inc}}(Z).$ 
\end{lemma}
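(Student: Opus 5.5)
The plan is to use the fact that an isometry preserves lengths, and to reduce finiteness of orbits to finiteness of free homotopy classes that have short representatives inside a fixed compact set. Fix an isometry $\phi$ of $Z$ and a class $\alpha\in\pi_1^{\mathrm{inc}}(Z)$. Set
\[
\ell(\gamma)=\inf\{\text{length of } c \,|\, c \text{ a representative of } \gamma\}
\]
for any free homotopy class $\gamma$. Since $\phi$ is an isometry, $\ell(\phi^n_*\alpha)=\ell(\alpha)=:L$ for every $n\in\mathbb{Z}$. By the remark after Definition \ref{definition_boundaryincompressible}, homeomorphisms preserve $\pi_1^{\mathrm{inc}}$, so every $\phi^n_*\alpha$ is end incompressible. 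I will use the finite type hypothesis as follows: $Z$ is the interior of a compact manifold with boundary. So we may fix a compact core $C$ and an exhaustion $K_j$ such that $Z-K_j^{\circ}$ is a finite union of product collars $\partial_k\times[j,\infty)$.

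Next, fix a basepoint $x_0$ and choose $i$ so that every representative of $\alpha$ meets $K_i$, which is possible since $\alpha$ is end incompressible. Then every representative of $\phi^n_*\alpha$ meets $\phi^n(K_i)$. This is a compact set of diameter $D=\operatorname{diam}K_i$ containing the point $\phi^n(x_0)$. The key claim is that $d(x_0,\phi^n(x_0))$ is bounded in $n$. Suppose instead it is unbounded along a subsequence. Take a representative $c_n$ of $\phi^n_*\alpha$ of length at most $L+1$. Then $c_n$ lies within distance $D+L+1$ of $\phi^n(x_0)$, so for large $n$ it lies in $Z-K_j$ for any prescribed $j$. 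Since $c_n$ is connected, it then lies in a single collar $\partial_k\times[j,\infty)$. Pushing it outward along the $[j,\infty)$ coordinate gives representatives of $\phi^n_*\alpha$ in $Z-K_m$ for every $m$. Hence $\phi^n_*\alpha$ is end compressible, a contradiction. This step is the main obstacle: one has to make sure that a connected loop far from $x_0$ really lies inside a single product end region, and that the outward push is a free homotopy. Both follow from the product structure of the collars, but this is where the finite type assumption is genuinely used.

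Given this boundedness, every class $\phi^n_*\alpha$ has a representative of length at most $L+1$ inside the fixed closed ball $B=\overline{B}(x_0,R)$, where $R=\sup_n d(x_0,\phi^n x_0)+D+L+1$. By completeness and Hopf--Rinow, $B$ is compact. It then remains to apply the standard finiteness fact: loops of length at most $L+1$ in a compact set $B$ of a Riemannian manifold represent only finitely many free homotopy classes. To prove this, cover $B$ by finitely many geodesically convex balls of radius $\epsilon$, where $\epsilon$ is smaller than the convexity radius on a neighbourhood of $B$. Subdivide a loop into at most $N=\lceil (L+1)/\epsilon\rceil$ arcs of length less than $\epsilon$. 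Then homotope the loop, through convex balls, to a closed chain of minimal geodesic segments joining centers of the balls. There are finitely many such chains of length at most $N$, hence finitely many classes. Therefore the orbit $\{\phi^n_*\alpha\}$ is finite, which proves Lemma \ref{lem:isometry-finite-orbits-finite-type}.
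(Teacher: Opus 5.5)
Your proof is correct and follows essentially the same route as the paper: the isometry preserves lengths and $\pi_1^{\mathrm{inc}}$, the collar-pushing argument keeps short representatives of end-incompressible classes from escaping to the ends, and a covering argument shows only finitely many free homotopy classes have short representatives in a compact set. The only difference is that bounding $d(x_0,\phi^n(x_0))$ is an unnecessary detour: your contradiction step already shows that every representative of an end-incompressible class meets $K_j$, and the paper uses exactly this fact to place all short representatives directly in the compact set $N_L(K_0)$.
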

\begin{proof}
We first prove that, for every $L>0$, only finitely many
classes in $        \pi_1^{\mathrm{inc}}(Z)$
have representatives of length at most $L$.
Choose a compact core
\[
        K_0\subset Z
\]
such that $Z-K_0$ is a finite union of collar ends.  Since $Z$ is complete,
the closed $L$-neighborhood
\[
        K_L=N_L(K_0)
\]
is compact.  
If a representative of an end-incompressible class lies entirely in
$Z-K_0$, then it lies in a collar end and can be pushed arbitrarily far out,
contradicting end-incompressibility.  Therefore every representative of an
end-incompressible class must meet $K_0$.  If such a representative has
length at most $L$, then its image lies in $K_L=N_L(K_0)$.

It remains to use compactness.  A compact Riemannian manifold, or compact
domain after a harmless thickening of $K_L$, contains only finitely many free
homotopy classes with representatives of length at most $L$.  
For instance,
choose a finite good cover with Lebesgue number $\delta>0$.  A loop of length
at most $L$ is encoded by an edge loop in the nerve of the
cover, of combinatorial length
bounded by $N (L, \delta )$, and there are only finitely
many such combinatorial loops.  Hence
there are only finitely many end-incompressible classes represented by loops of length at most $L$.

Now let $\phi:Z\to Z$ be an isometry and let
\[
        \beta_Z\in\pi_1^{\mathrm{inc}}(Z).
\]
Choose a loop $c$ representing $\beta_Z$, and set
\[
        L=\ell(c).
\]
Then $\phi^k(c)$ represents $\phi_*^k(\beta_Z)$ and has the same length
$L$ for every $k\in\mathbb Z$.  Since $\phi$ preserves
end-incompressibility, the orbit
\[
        \{\phi_*^k(\beta_Z)\mid k\in\mathbb Z\}
\]
is contained in the finite set of end-incompressible classes represented by
loops of length at most $L$.  Therefore the orbit is finite.
\end{proof}
\begin{corollary} \label{corollary_Thurston0}
Let $$(Z ,{g_ Z}) \hookrightarrow (X _{\phi}, g _{\phi}) \to
S ^{1}$$ be the mapping torus of an isometry of 
$Z, g _{Z}$, with $Z$  finite type and $g _{Z}$ taut. Let $ \beta \in \pi_{1} ^{inc}(X _{\phi })$ be
the image of some $\beta _{Z} \in \pi _{1} ^{inc} (Z)$ under fiber inclusion, as previously.
\begin{enumerate}
\item $X _{\phi }$ does not admit a complete Riemannian  metric with negative curvature.
\item $X _{\phi}$ does not admit a forward complete Riemann-Finsler metric with
a unique nondegenerate class $\beta$ geodesic string. 
\item If $\beta $ is indivisible, $\chi ^{S ^{1}}(L _{\beta}
X _{\phi } ) =0$ and so $X _{\phi}$ does not admit 
a $\beta $-regular forward complete Finsler metric with an odd number of $\beta $-class geodesic strings. 
\end{enumerate}
\end{corollary}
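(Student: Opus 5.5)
Parts (1) and (2) follow directly from Theorem \ref{thm:Fibration}, applied to the fibration built in Example \ref{example_phi}. Part (3) then comes from combining the product-type formula behind that theorem with Theorem \ref{theorem_Eulercharacteristic}.

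\emph{Verifying the hypotheses.} The base is $Y=S^1$ with its standard flat metric $g_Y$. It is closed and connected, and $\chi(S^1)=0\neq\pm1$. Since $\pi_2(S^1)=0$, Lemma \ref{lem_fiber-representatives-holonomy-orbit} applies. Every contractible closed geodesic of the flat circle is constant, because a nonconstant closed geodesic winds a nonzero number of times. The holonomy of the mapping torus is generated by $\phi_*$. Since $Z$ is finite type and $\phi$ is an isometry, Lemma \ref{lem:isometry-finite-orbits-finite-type} shows that the orbit of $\beta_Z$ is finite, so $p$ is $\beta$-holonomy-finite. The metric $g_\phi$ is the quotient of the product metric $g_Z\oplus g_{\mathbb R}$ on $Z\times\mathbb R$ by the deck transformation $(z,s)\mapsto(\phi(z),s+1)$. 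It is complete, the vertical distribution is parallel because it is parallel upstairs, and $p$ is a Riemannian submersion. Each fiber is isometric to the taut metric $(Z,g_Z)$. Hence $p$ is a $\beta$-periodic fibration, and Theorem \ref{thm:Fibration} yields (1) and (2) at once.

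\emph{Part (3).} Assume $\beta$ is indivisible. The key step is to show that $F(g_\phi,\beta)=0$; it is here that the product formula, Theorem \ref{thm:EulerProduct}, enters. I expect its statement to have the schematic form $F(g,\beta)=\chi(Y)\cdot(\text{a weighted fiber count over the holonomy orbit})$. The proof of Theorem \ref{thm:Fibration} should already contain this identity: that theorem argues $F=\chi(Y)\cdot(\cdots)$ and rules out $F=1/n$ using $\chi(Y)\neq\pm1$. With $\chi(S^1)=0$ the identity gives $F(g_\phi,\beta)=0$.

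To make this usable I first need $g_\phi$ to be $\beta$-taut. Class-$\beta$ geodesics of $g_\phi$ project to closed geodesics of $S^1$ that are contractible, because $p_*\beta=0$ for a fiber class. By the hypothesis on $g_Y$ these projections are constant, so every class-$\beta$ geodesic lies in a single fiber. Within a fiber it represents one of the finitely many classes in the holonomy orbit. Tautness of the fibers then gives compactness of $S(g_\phi,\beta)$, uniformly over the compact base $S^1$. With tautness in hand, Theorem \ref{theorem_Eulercharacteristic} gives $\chi^{S^1}(L_\beta X_\phi)=F(g_\phi,\beta)=0$.

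Now let $g$ be any $\beta$-regular forward complete Finsler metric on $X_\phi$ with finitely many class-$\beta$ strings. For $F(g,\beta)$ to be defined, $g$ must be $\beta$-taut, and I take tautness as part of the hypothesis. Since $\beta$ is indivisible, every string has $\mathrm{mult}(o)=1$, and Theorem \ref{theorem_Eulercharacteristic} gives $\sum_o(-1)^{\mathrm{morse}(o)}=0$. A sum of $\pm1$ terms vanishes only when the number of terms is even, so the number of class-$\beta$ strings is even.

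\emph{Main obstacle.} The delicate point is the vanishing $F(g_\phi,\beta)=0$. The geodesics of $g_\phi$ come in degenerate families parametrized by the base circle, so $g_\phi$ is not $\beta$-regular and the naive count \eqref{eq_Fgb} does not apply. One needs either the abstract Fuller-index perturbation underlying Theorem \ref{thm:EulerProduct}, or a direct argument that the Fuller index of each fiberwise family is multiplied by $\chi(S^1)=0$. I would try the second first. After a fiberwise perturbation, each family becomes a mapping-torus family over $S^1$ of nondegenerate fiber geodesics. The holonomy permutes finitely many of them, and a vector field on the base with no zeros should produce a perturbation of the flow with no $\beta$-class orbits at all, which forces the Fuller index to vanish.
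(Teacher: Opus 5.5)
Your proposal is correct and follows the paper's proof. Parts (1) and (2) come from Theorem \ref{thm:Fibration} via Example \ref{example_phi} and Lemma \ref{lem:isometry-finite-orbits-finite-type}, and part (3) comes from $F(g_\phi,\beta)=0$, given by Theorem \ref{thm:EulerProduct} with $\chi(S^1)=0$, combined with Theorem \ref{theorem_Eulercharacteristic}. The extra $\beta$-tautness hypothesis you impose on $g$ is automatic, since an odd (hence finite) number of strings makes $S(g,\beta)$ a finite union of compact $S^1$-orbits. Your main-obstacle paragraph is also unnecessary, because Theorem \ref{thm:EulerProduct} already supplies both the $\beta$-tautness of $g_\phi$ and the vanishing of $F(g_\phi,\beta)$.
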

% In the special case when $Z$ is compact, the first part of
% the above corollary can be deduced from
% Preissmann's theorem, see also ~\cite[Theorem
% 9.3.4]{cite_BuragoLengthSpaces} for a generalization that
% fits our Finsler setting. 

The following questions seem to be open.
\begin{question} \label{que_holonomy_finite_auto} Is the 
finite type assumption necessary for Corollary
\ref{corollary_Thurston0} in the case of surfaces? The
answer is likely no, as the intuition of Nielsen-Thurston classification (it cannot be
directly applied in the infinite type setting) suggests that
$\phi $ should have pseudo-Anosov type behavior which an
isometry even on an infinite type surface should not have.
\end{question}

% \begin{comment}
% \subsection{Estimated counts of multiply covered
% geodesics} \label{sec_Applications to counts of multiply covered
% geodesics}
% The following is a more extensive version of Theorem
% \ref{thm_IntroTn}.
% \begin{theorem} [Proof in Section \ref{sec_Proof of Theorem thm_estimategeodesics}]\label{thm_estimateIntro}
% Suppose that $g _{1}$ is a taut metric on $X$, taut
% deformation equivalent to a complete metric of negative
% curvature (everything is Finsler). 
% Suppose that $\beta \in \pi _{1} ^{inc} (X)$ is a $k$-power.
% Let $L _{\beta}$ be the length of a class $\beta$, $g _{1}$-geodesic.
% For all $\epsilon >0$
% sufficiently small, whenever $g'$ is $C
% ^{2}$ $\epsilon$ close to $g _{1}$, and is $\beta $-regular,
% we have:
% \begin{equation*} 
% \displaystyle \sum_{o \in
% \mathcal O _{2  L _{\beta}} (g',\beta)} \frac{(-1) ^{\operatorname {morse}  (o)}}
% {\mult (o)} = \frac{1}{k},  
% \end {equation*}
% where $\mathcal O _{2  L _{\beta}} (g', \beta)$ is the set of class $\beta
% $ geodesic strings with $g'$-length less than $2  L _{\beta}$.
% \end{theorem}
% \begin{corollary} \label{cor_estimate}
% Let $X,g$ be a complete Finsler manifold with finitely many
% class $\beta \in \pi _{1}  ^{inc} (X)$, geodesic strings.  Suppose that $\beta$ is a $p$-power for $p$ a prime. 
% Then whenever $g$ has  a class $\beta$ geodesic string with multiplicity $k \neq p$, then $g$ has at least two such geodesic strings.
% \end{corollary}
\section{Compactness preliminaries} \label{sec_Compactness preliminaries}
\begin{lemma} \label{lemma_containedinCompact} Suppose that
$g$ is a complete metric on $X$, $\beta \in \pi _{1}  ^{inc}
(X)$ and let $S \subset L _{\beta} X$ be a subset on which
the $g$-length functional is bounded from above. Then the images in $X$ of
elements of $S$ are contained in a fixed compact subset of $X$.
\end{lemma}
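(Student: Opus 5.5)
The plan is to argue by contradiction using the end-incompressibility of $\beta$, much as in the proof of Lemma \ref{lem:isometry-finite-orbits-finite-type}. Let $L$ be an upper bound for $\ell_g$ on $S$, and fix a compact exhaustion $\bigcup_i K_i = X$ as in Definition \ref{definition_boundaryincompressible}. Since $\beta$ is end incompressible, there is an index $i_0$ such that no representative of $\beta$ lies in $X - K_{i_0}$. So every loop $o \in S$ meets $K_{i_0}$. Each such loop has length at most $L$, so its image lies in the closed forward $L$-neighborhood
\[
N_L^+(K_{i_0}) = \{x \in X \,|\, d_g(y,x) \le L \text{ for some } y \in K_{i_0}\}.
\]
Here we measure along the loop starting from a point of $K_{i_0}$; for a closed loop, every point is reached by a forward path of length at most $L$ from any given point on it.

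It then remains to show that $N_L^+(K_{i_0})$ is compact. In the Riemannian case this is Hopf--Rinow: the set is closed and bounded, hence compact. In the forward complete Finsler case we use the Finsler Hopf--Rinow theorem, which says that forward bounded closed sets are compact. Concretely, $N_L^+(K_{i_0})$ is contained in the forward ball $B^+(y_0, L + \operatorname{diam}^+ K_{i_0})$ around a fixed $y_0 \in K_{i_0}$. The forward diameter of $K_{i_0}$ is finite because $K_{i_0}$ is compact and $d_g$ is continuous. Closed forward balls are compact for forward complete metrics, which gives the claim.

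The main subtle point is the use of end incompressibility: we need a single index $i_0$ that works uniformly for all loops in $S$. This is exactly what the negation of end compressibility provides, since it yields one $i_0$ for which no representative of $\beta$ avoids $K_{i_0}$. A second, smaller care point is the asymmetry of Finsler distance. We handle it by always taking forward distances from points of $K_{i_0}$ along the loop's own orientation. The images in $X$ of all elements of $S$ then lie in the fixed compact set $N_L^+(K_{i_0})$.
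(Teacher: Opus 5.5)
Your proposal is correct and uses the same ingredients as the paper's proof: end incompressibility forces every loop of $S$ to meet a fixed $K_{i_0}$, the length bound then confines the loops to a bounded neighborhood of $K_{i_0}$, and Hopf--Rinow makes that neighborhood compact. The difference is only presentational. The paper argues by contradiction with a case split and appeals to bounded diameter. You argue directly and work with forward (asymmetric) Finsler distances via the forward Hopf--Rinow theorem, which the paper leaves implicit.
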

\begin{proof} [Proof]
Suppose otherwise. Fix an exhaustion by
nested compact sets $$\bigcup _{i \in \mathbb{N} } K _{i}
= X, \quad K ^{\circ} _{i} \supset
K _{i-1}.$$ 
Then either there is sequence $\{o _{i}\} _{i \in \mathbb{N}
}$, $o _{i} \in S$  such that $o _{i} \in K _{i} ^{c} $, for $K _{i} ^{c}$ the
complement of $K _{i}$, which contradicts the fact that
$\beta $ is end incompressible. 
Or there is a sequence $\{o _{k} \} _{k \in \mathbb{N} }$,
$o _{k} \in S$ such that:
\begin{enumerate}
\item Each $o _{k}$
intersects $K _{i _{0}}$ for some $i _{0}$ fixed.
\label{condition_intersects}
\item For each $i \in \mathbb{N} $
there is a $k _{i} >i$ such that $o _{k _{i}}$ is
not contained in $K _{i}$. \label{condition_notcontained}
\end{enumerate}
Now if $\operatorname {diam} (o _{k})$ is bounded in $k$, then condition
\ref{condition_intersects} implies that $o _{k}$ are
contained in a set of bounded diameter. (Here $\operatorname
{diam} (o _{k})$ denotes the diameter of $\image o _{k}$.) Consequently, by
Hopf-Rinow theorem ~\cite{cite_ChernBook},  $o _{k}$ are contained in a compact set.
But this contradicts condition \ref{condition_notcontained},
and the fact that $K _{i}$ form an exhaustion of $X$. 

Thus, we conclude that $\operatorname {diam} (o _{k})$ is
unbounded, but this contradicts the hypothesis.
\end{proof}
\begin{lemma} \label{lem_SA} 
Let $g$ be a complete metric on $X$, and let
$\beta\in \pi_1^{\mathrm{inc}}(X)$. Denote by $S _{A} (g,
\beta ) \subset S (g, \beta) $ the subset of elements with
$g$-length at most $A$. Then $S _{A} (g,
\beta ) \subset S (g, \beta ) $ is compact.
\end{lemma}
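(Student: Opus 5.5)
We prove compactness of $S_A(g,\beta)$ by sequential compactness, since $L_\beta X$ with the compact-open topology (equivalently the $C^1$ topology on constant speed geodesics) is metrizable. Take a sequence $\{o_k\} \subset S_A(g,\beta)$. By Lemma \ref{lemma_containedinCompact}, applied to $S = S_A(g,\beta)$ on which length is bounded by $A$, all images $o_k(S^1)$ lie in a fixed compact set $K \subset X$.

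Each $o_k$ is a constant speed geodesic parametrized by $S^1=\mathbb{R}/\mathbb{Z}$. Its speed equals $\ell_g(o_k) \le A$. So the initial data $(o_k(0), \dot o_k(0))$ lie in the compact set $\{v \in TX|_K : F_g(v) \le A\}$, where $F_g$ is the Finsler norm. Passing to a subsequence, $(o_k(0),\dot o_k(0)) \to (x,v)$.

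The geodesic flow (the flow of the geodesic spray, smooth on the slit tangent bundle and $C^1$ at the zero section) depends continuously on initial data. Hence $o_k \to o$ in $C^1$, and in fact in $C^\infty$ away from the zero section, uniformly on $[0,1]$. Here $o(t)=\exp(t v)$, defined for all $t$ by forward completeness. Closedness passes to the limit: $o(1)=\lim o_k(1)=\lim o_k(0)=o(0)$ and likewise $\dot o(1)=\dot o(0)$, so $o$ is a smooth closed constant speed geodesic. Uniform convergence of loops in a compact region implies that $o_k$ is homotopic to $o$ for large $k$: nearby points are joined by unique short geodesics, using a positive lower bound on the injectivity/convexity radius over $K$. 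Therefore $o \in L_\beta X$. Its length is $\lim \ell_g(o_k) \le A$.

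The main obstacle is to rule out the degenerate limit $v=0$, i.e. $o$ constant, which would not lie in $L_\beta X$ since $\beta$ is nontrivial. We handle it with a systole-type bound on $K$. There is $\epsilon>0$, for instance a uniform lower bound for the injectivity radius (or the radius of a convex normal neighborhood) over a compact neighborhood of $K$, such that any loop in $K$ of length $<\epsilon$ is contractible. The $o_k$ are noncontractible because $\beta \in \pi_1^{inc}(X)$ is nonconstant. Hence $\ell_g(o_k) \ge \epsilon$, and so $F_g(v) = \lim \ell_g(o_k) \ge \epsilon > 0$. The Finsler non-reversibility only enters in requiring forward and backward distances to be comparable on compact sets, which holds by continuity. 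So $o \in S_A(g,\beta)$, and the subsequence converges to it in the topology of $L_\beta X$, proving compactness.
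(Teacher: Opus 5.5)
Your proof is correct. It shares the paper's first step, confining all images to a compact $K$ via Lemma \ref{lemma_containedinCompact}, but it extracts the convergent subsequence by a different tool.

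The paper argues that constant speed plus the length bound makes $S_A(g,\beta)$ equicontinuous, and then applies Arzel\`a--Ascoli. It leaves to ``standard arguments'' the facts that the $C^0$ limit is a smooth closed geodesic, has length at most $A$, and lies in class $\beta$. You instead use compactness of the initial vectors in $\{v\in TX|_K : F_g(v)\le A\}$, together with continuous dependence of the geodesic flow on initial data. This gives $C^1$ convergence outright, and makes the geodesic property, closedness and the length bound of the limit immediate. The only cost is the mild regularity of the Finsler spray at the zero section, which you address.

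Your separate systole argument excluding $v=0$ is correct but redundant. Your own observation that uniformly close loops over $K$ are homotopic already places the limit in the component $L_\beta X$, which is open and hence closed in the loop space. That component contains no constant loops, because $\beta$ is nontrivial.
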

\begin{proof} [Proof] By Lemma
\ref{lemma_containedinCompact}, the images of elements of $S _{A} (g,
\beta ) \subset S (g, \beta ) $ are contained in compact
$K$. Since elements of $S _{A} (g,
\beta ) \subset S (g, \beta ) $ are parametrized by constant
speed, and since we have a length bound $S _{A} (g,
\beta )$  is an equicontinuous collection. The result then
follows by the Arzelà--Ascoli theorem and standard arguments. 
\end{proof}
\begin{corollary} \label{lemma_compact}
Suppose that $g$ is a complete metric on $X$, all
of whose class $\beta \in \pi _{1} ^{inc} (X)$ geodesics are
minimizing, then $g$ is $\beta$-taut.
\end{corollary}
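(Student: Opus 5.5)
The plan is to deduce the corollary directly from Lemma \ref{lem_SA}, by showing that all class $\beta$ closed $g$-geodesics have uniformly bounded length, so that $S(g,\beta)=S_A(g,\beta)$ for a suitable $A$.

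First, fix any representative loop $c$ of $\beta$ that is piecewise smooth (for instance, smooth it out if necessary), and set $A=\ell_g(c)<\infty$. Let $o\in S(g,\beta)$ be any constant speed closed $g$-geodesic in class $\beta$. By hypothesis $o$ is minimizing in its free homotopy class. This means $\ell_g(o)\le \ell_g(c')$ for every piecewise smooth loop $c'$ freely homotopic to $o$, and in particular $\ell_g(o)\le \ell_g(c)=A$. Hence every element of $S(g,\beta)$ has length at most $A$, that is, $S(g,\beta)=S_A(g,\beta)$.

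By Lemma \ref{lem_SA}, $S_A(g,\beta)$ is compact. Since $g$ is complete by assumption, both conditions of Definition \ref{definition_betataut} hold, and so $g$ is $\beta$-taut.

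The only point requiring care is the meaning of ``minimizing in their class''. The natural reading is length-minimizing among all loops in $L_\beta X$ of finite length. This reading is exactly what gives the uniform bound: the common value $\inf_{L_\beta X}\ell_g$ is finite because $L_\beta X$ is nonempty. Under this reading all closed geodesics in $S(g,\beta)$ actually have the same length, which is even stronger than what is needed. I do not expect any genuine obstacle here, since the real work (confinement to a compact set via end incompressibility, followed by Arzelà--Ascoli) is already done in Lemmas \ref{lemma_containedinCompact} and \ref{lem_SA}.
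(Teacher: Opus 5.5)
Your proof is correct and matches the argument the paper intends: the corollary is stated without proof right after Lemma \ref{lem_SA}, and the implicit reasoning is that minimizing class $\beta$ geodesics have length bounded by (indeed equal to) $\inf_{L_\beta X}\ell_g$. Hence $S(g,\beta)=S_A(g,\beta)$ is compact by that lemma, and together with completeness this gives $\beta$-tautness.
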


\begin{lemma} \label{lem_existsgeod}
Let $g$ be a complete metric on $X$, and let
$\beta\in \pi_1^{\mathrm{inc}}(X)$. Then there exists a closed
$g$-geodesic minimizing length among all loops in the free homotopy class
$\beta$.
\end{lemma}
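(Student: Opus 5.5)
The plan is the direct method of the calculus of variations. Fix a class $\beta \in \pi_1^{inc}(X)$ and set
\[
\ell_0 = \inf\{\ell_g(c) \mid c \in L_\beta X \text{ piecewise smooth}\}.
\]
First we show $\ell_0>0$. If loops in the class $\beta$ of arbitrarily small length existed, then by Lemma \ref{lemma_containedinCompact} they would all lie in a fixed compact set $K$. On $K$ there is a uniform radius below which loops are contractible, since the injectivity radius is bounded below on compacta. Such short loops would therefore be contractible, contradicting that $\beta$ is nontrivial; by convention $\beta$ is nonconstant in the closed case, and end incompressibility gives this in general.

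Next take a minimizing sequence $c_k$ with $\ell_g(c_k)\to \ell_0$, each parametrized proportionally to arc length (constant speed, in the Finsler sense). The lengths are bounded, so Lemma \ref{lemma_containedinCompact} again puts all $c_k$ in a compact $K$. The family is then equi-Lipschitz with respect to the distance $d_g$; in the Finsler case, forward and backward distances are comparable on $K$. By Arzelà--Ascoli we extract a uniformly convergent subsequence with limit $c$. To see that $c$ lies in $\beta$, note that uniformly close loops in $K$ are freely homotopic: for $k$ large, join $c_k(t)$ to $c(t)$ by short unique minimal geodesics, which is possible below the uniform convexity radius of $K$. Lower semicontinuity of length under uniform convergence, available since Finsler length is defined via the metric, gives $\ell_g(c)\le \ell_0$. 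Hence $c$ is a length minimizer in $\beta$.

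Finally we show $c$ is a smooth closed geodesic. Subdivide $S^1$ into arcs so short that each image arc lies in a (forward) convex ball of $K$. Replace each arc by the minimizing geodesic segment between its endpoints. This does not increase length and stays in the class $\beta$, because the homotopy runs inside convex balls. Minimality forces each arc to already be a minimizing geodesic segment. Shifting the subdivision points and repeating shows there are no corners, since a corner could be cut within a convex ball to strictly decrease length. So $c$ is a closed geodesic; reparametrized at constant speed, it is the desired minimizer.

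The main obstacle is the compactness step in the noncompact setting: ensuring that the minimizing sequence does not escape to infinity and does not degenerate to a point. Both are handled by end incompressibility through Lemma \ref{lemma_containedinCompact}, together with the positive lower bound on $\ell_0$. The Finsler asymmetry requires only minor care, namely uniform comparability of $F(v)$ and $F(-v)$ on compacta.
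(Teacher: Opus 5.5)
Your argument is correct, but it takes a different route from the paper's.

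\textbf{The paper's route.} The paper argues by contradiction. If the infimum $A$ of lengths in $\beta$ were not realized by a closed geodesic, then compactness of $S_{A+1}(g,\beta)$ (Lemma \ref{lem_SA}) gives an $\epsilon>0$ such that no class $\beta$ closed geodesic has length in $[A,A+\epsilon)$. The paper then runs Birkhoff's curve shortening process on a loop of length less than $A+\epsilon$. All stages stay in the compact set supplied by Lemma \ref{lemma_containedinCompact}, because lengths do not increase. It then quotes the standard convergence of the process to a closed geodesic in the same class, whose length lies in $[A,A+\epsilon)$, giving the contradiction.

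\textbf{Your route.} You use the direct method: Arzel\`a--Ascoli on a constant-speed minimizing sequence confined to a compact set, preservation of the class under uniform limits, lower semicontinuity of length, and local replacement by minimizing segments to obtain regularity.

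\textbf{Comparison.} Both proofs use end incompressibility at exactly the same point, namely Lemma \ref{lemma_containedinCompact}, to prevent the loops escaping to infinity.
\begin{itemize}
\item Your version is more self-contained. It needs neither Lemma \ref{lem_SA} nor the convergence theory of the shortening process, which itself hides an Arzel\`a--Ascoli plus fixed-point argument.
\item It produces the minimizer directly rather than by contradiction.
\item The Finsler adjustments you flag are all that is needed: forward convex balls, comparability of $F(v)$ and $F(-v)$ on compacta, and the metric definition of length.
\item The paper's version is shorter on the page because it outsources these steps to the classical Birkhoff--Milnor machinery.
\end{itemize}

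One small remark: your first step proving $\ell_0>0$ is redundant. Once you know the uniform limit $c$ lies in the nontrivial class $\beta$, it cannot be constant, so degeneration is already ruled out.
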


\begin{proof}
Suppose otherwise and
let
\[
        A=\inf_{\gamma\in L_\beta X}\ell_g(\gamma).
\]
By the hypothesis that there is no geodesic with length $A$, 
and by Lemma \ref{lem_SA}, there is 
an $\epsilon >0$ such that there is no closed geodesic $o$
with $A \leq \ell_{g} (o) < A + \epsilon$.  

By Lemma \ref{lemma_containedinCompact}
the subset $M _{\beta, A} \subset L _{\beta}X$ consisting of
elements with length less than $A+ \epsilon$ is contained in a compact $K$. 
Pick
$o \in M _{\beta,A}$ and apply the curve shortening
process to it. Let $o _{k}$, $k \in \mathbb{N} $  be the stages of this process.
Then $\image o _{k} \subset K$. In particular $\{o _{k}\}$
must converge to a geodesic, for
instance ~\cite{cite_MilnorMorsetheory}, (we can also argue
via Arzelà--Ascoli to prove convergence)
but this contradicts the condition on $\epsilon $. 
\end{proof}

\subsection{Proof of Theorem \ref{thm:notgeodesible}}
\label{sec_proofthmnotgeodesible}
Let $\{g _{t}\}$, $t \in [0,1]$  be as in the
hypothesis, with 
\begin{equation} \label{eq_mainC}
\sup _{t} |\sup _{o \in S  (g _{t}, \beta )} \ell _{g _{t}}
(o) -  \inf
_{o \in S  (g _{t}, \beta )} \ell _{g _{t}} (o)| < C, 
\end{equation}
and suppose that 
\begin{equation*}
\sup _{(o,t) \in \mathcal S(\{g _{t}\},\beta )} \ell _{g _{t}} (o) = \infty.
\end{equation*}
Then we have a sequence $\{o _{k}\}$, $k \in \mathbb{N}
$, of closed class $\beta $ $g _{t _{k}}$-geodesics
in $X$, satisfying:
\begin{enumerate} 
	\item $\lim
_{k \to \infty } t _{k} = t _{\infty } \in [0,1]. $
\label{cond:1}
\item  $\lim _{k \to \infty }\ell_{g _{t _{k}}} (o _{k})
= \infty $,  where $\ell_{g _{t _{k}}} (o _{t _{k}})$ is
the length with respect to $g _{t _{k}}$. \label{cond:2}
\end{enumerate}
Applying Lemma \ref{lem_existsgeod}, we may choose a length-minimizing closed
$g_{t_\infty}$-geodesic $o_\infty$ in the class $\beta$.
And let $L$ denote
its $g _{t_{\infty} }$-length. Let $g _{aux}$ be
a fixed auxiliary metric on $X$, and let $L _{aux} $ be the
$g _{aux }$ length of $o _{\infty }$.

Define a pseudo-metric $d _{C^0}$ on the space of metrics on
$X$ as follows. Set $K = \image o _{\infty}$. And
set $$V = \{v \in TX \,|\, \pi ({v})  \in
K \text{ for $\pi _{}: TX \to X$ the canonical
projection, and $|v| _{aux} = 1$} \},$$
where $|v| _{aux}$ is the norm taken with
respect to $g _{aux}$.

Then define: $$d _{C ^{0}} (g _{1}, g _{2}) = \sup _{v \in
V} | |v| _{g _{1}} - |v| _{g _{2}}|. $$  

By Properties \ref{cond:1} and \ref{cond:2}, for every
$\epsilon >0$  we may find
a  $k > 0$ such that: 
\begin{equation} \label{eq:lessepsilon}
d _{C ^{0}} (g _{t
_{k}}, g _{t _{\infty }}) < \epsilon 
\end{equation}
and 
\begin{equation} \label{eq_Laux}
\ell_{g _{t _{k}}}
(o _{k}) > C + L + L _{aux} \cdot \epsilon.
\end{equation}
% By \eqref{eq:lessepsilon} and	\eqref{eq_Laux},  

By \eqref{eq:lessepsilon},
we have:
% \begin{equation*}
% |\ell_{g _{t _{k}}} (o _{k}) - \ell_{g _{t _{k}}} (o _{\infty})|
% \geq |C + L + L _{aux} \cdot \epsilon - \ell_{g _{t _{k}}} (o
% _{\infty})|
% \end{equation*}
$$\ell_{g _{t _{k}}} (o
_{\infty}) < \ell_{g _{t _{\infty }}} (o _{\infty}) + L _{aux}
\cdot \epsilon = L + L _{aux} \cdot \epsilon.$$
Combining with \eqref{eq_Laux} we get:
\begin{equation*}
\ell_{g _{t _{k}}}
(o _{k}) > \ell_{g _{t _{k}}} (o _{\infty }) +C.
\end{equation*}
Since we may find a closed $g _{t _{k}}$-geodesic $o'$
satisfying $\ell_{g _{t _{k}}} (o') \leq \ell_{g _{t _{k}}} (o
_{\infty })$,
we get that
$$|\max _{o \in S  (g _{t _{k}}, \beta)} \ell_{g _{t _{k}}} (o)
-  \min _{o \in S  (g _{t _{k}}, \beta)} \ell_{g _{t _{k}}} (o)|> C,
$$
and so we are in contradiction. 

Thus, \begin{equation*}
\sup _{(o,t) \in \mathcal O(\{g _{t}\},\beta )} \ell_{g
_{t}} (o) < \infty.
\end{equation*}
Then compactness of $\mathcal S(\{g _{t}\},\beta )$
follows by a slight modification of the proof of Lemma \ref{lem_SA}. 
\qed

\section{Preliminaries on Reeb flow} \label{sec_Some preliminaries on Reeb dynamics}
Let $(C ^{2n+1}, \lambda ) $ be a contact manifold with
$\lambda$ a contact form, that is a one form such that $\lambda
\wedge (d \lambda) ^{n} \neq 0$.    Denote by
$R^{\lambda} $ the Reeb vector field 
satisfying: $$ d\lambda (R^{\lambda}, \cdot ) = 0, 
\quad \lambda (R^{\lambda}) = 1.$$ 
Recall that a \textbf{\emph{closed $\lambda $-Reeb orbit}}
(or just Reeb orbit when $\lambda $ is implicit)  is
a smooth map $$o: (S ^{1} = \mathbb{R} / \mathbb{Z})    \to
C $$ such 
that $$ \dot o (t) = c R^{\lambda}  (o (t)), $$ 
with $\dot o (t) $ denoting the time derivative,
for some $c>0$ called the period. Let $S(R^{\lambda},\beta)$ denote the space of all closed ${\lambda }$-Reeb
orbits in free homotopy class $\beta $, with its compact
open topology, and set $$\mathcal O(R^{\lambda},\beta) = S(R^{\lambda},\beta)/S ^{1}, $$
where $S ^{1} = \mathbb{R} ^{} /\mathbb{Z}  $ acts by reparametrization $t \cdot o(\tau)
= o (t + \tau)$.

% The period map $A: S(R^{\lambda},\beta) \to \mathbb{R} ^{}  $,
% is the map $o \mapsto \int _{S ^{1}} o ^{*} \lambda $. This
% induces a map $A: \mathcal O(R^{\lambda}, \beta ) \to
% \mathbb{R} ^{} $, likewise called the period.

\section{\texorpdfstring{Definition of the functional $F$ and proofs of auxiliary results}{Definition of the functional F and proofs of auxiliary results}} \label{sec:Definition of F}
Let $X$ be a manifold. Fix once and for all an auxiliary Riemannian metric
$g_0$, and set
\[
        C=S^*_{g_0}X.
\]
For any Riemann-Finsler metric $g$, let $F_g^*$ denote the dual norm, and let
\[
        \rho_g:C\to S_g^*X,
        \qquad
        \rho_g(\xi)=\frac{\xi}{F_g^*(\xi)}
\]
be the fiberwise radial projection.  We define a contact form on the fixed
manifold $C$ by
\[
        \lambda_g=\rho_g^*\lambda_{\mathrm{can}},
\]
where $\lambda_{\mathrm{can}}$ is the Liouville one-form on $T^*X$.  We write
\[
        R^{\lambda_g}
\]
for the Reeb vector field of $\lambda_g$ on this fixed manifold $C$.  Under
$\rho_g$, the flow of $R^{\lambda_g}$ corresponds to
the $g$-geodesic flow.

The same convention applies to the Sasaki metric.  If $g$ is Riemannian, let
$g_S^{\mathrm{nat}}$ denote the usual Sasaki metric on $S_g^*X$, and set
\[
        g_S=\rho_g^*g_S^{\mathrm{nat}}
\]
on the fixed manifold $C$.    Similarly, all metric-dependent functions on $S_g^*X$, such
as verticality functions, are regarded as functions on $C$ by pullback through
$\rho_g$.  This fixed-$C$ convention will be used throughout.

% \subsection{Lift classes in the cosphere bundle}\label{subsec:lift-classes}
%
%
% We record the mechanism used to pass from a class of curves in $X$ to a
% free homotopy class in the cosphere bundle.  

\subsection{Lifted geodesic strings}\label{subsec:lifted-geodesic-strings}
We recall how closed geodesics in $X$ are lifted to closed Reeb orbits in
its cosphere bundle.  Let $C$ be the fixed cosphere model used in this paper,
and let
\[
        q:C\to X
\]
denote the projection.  For a Riemann-Finsler metric $g$, the contact form
$\lambda_g$ on $C$ has Reeb vector field $R^{\lambda_g}$.  Its closed orbits
project to oriented closed constant-speed $g$-geodesics in $X$.

Conversely, every oriented closed constant-speed $g$-geodesic has a unique
Reeb lift to $C$.  In the Riemannian case, if
\[
        o:S^1\to X
\]
is a nonconstant closed $g$-geodesic, its lift to the $g$-unit cotangent bundle
is
\[
        \widetilde o_g(t)
        =
        \frac{g_{o(t)}(\dot o(t),\,\cdot)}
             {|\dot o(t)|_g}
        \in S_g^*X .
\]
Since $o$ is a geodesic, this covector field is parallel along $o$.  Thus,
under the Levi--Civita splitting of $T(T^*X)$, the curve $\widetilde o_g$ is
horizontal.  Hence, after the constant reparametrization determined by
the speed of $o$, it is a trajectory of the Reeb vector field.  Passing to the
fixed model $C$ by radial projection gives the corresponding closed
$R^{\lambda_g}$-orbit in $C$.  The same statement holds in the
Riemann-Finsler case using the Legendre transform.

The projection $q$ induces a homeomorphism between closed $R^{\lambda_g}$-orbit strings
in $C$ and oriented closed constant-speed $g$-geodesic strings in $X$.  Hence, for every class
$\beta\in\pi_1^{\mathrm{inc}}(X)$, if $\mathcal O(g,\beta)$
denotes the class $\beta$ $g$-geodesic string set as usual, then its Reeb lift
\[
        \widetilde{\mathcal O}(g,\beta)
        =
        \left\{
        \widetilde o
        \ \middle|\
        o\in\mathcal O(g,\beta)
        \right\}
        \subset
        \mathcal O(R^{\lambda_g})
\]
is homeomorphic to $\mathcal O(g,\beta)$.  In particular, if
$\mathcal O(g,\beta)$ is compact, then
$\widetilde{\mathcal O}(g,\beta)$ is compact.
Moreover, $\widetilde{\mathcal O}(g,\beta)$ is open in
$\mathcal O(R^{\lambda_g})$, since the projected free homotopy class in
$X$ is locally constant on the loop space of $C$.
\subsection {Definition via the Fuller index}
Suppose then that $g$ is $\beta$-taut.  Define 
$$
        F(g,\beta) = i(\widetilde{\mathcal
				O}(g,\beta), R ^{\lambda _{g}}) \in \mathbb Q,
$$
where the right hand side is the Fuller index, see Appendix~\ref{appendix:Fuller}.  

For a path of metrics $\{g_t\}_{t\in[0,1]}$,
the lifted cobordism of orbit string spaces
\[
        \widetilde{\mathcal O}(\{g_t\},\beta)
        =
        \left\{
        (\widetilde o, t)
        \ \middle|\
        o\in\mathcal O(g_t,\beta)
        \right\}
\]
is likewise homeomorphic to the downstairs cobordism
$\mathcal O(\{g_t\},\beta)$.  

\begin{proof} [Proof of Theorem \ref{thm:invariantF}]
Let $\beta \in \pi _{1} ^{inc} (X)$ be given, and let
$\{g_t\}_{t\in[0,1]}$ be a $\beta $-taut deformation.
As discussed above, the lifted cobordism
$\widetilde{\mathcal O}(\{g_t\},\beta)$ is compact by
$\beta$-tautness.  It is also open in the orbit cobordism, since the projected free homotopy class in $X$ is locally constant.  
Basic
invariance of the Fuller index, namely
\eqref{eq_basicinvariance}, then immediately gives 
\[
        F(g_0,\beta)=F(g_1,\beta).
\]
\end{proof}
% \section{Proof of Theorem \ref{thm_estimateIntro}} \label{sec_Proof of Theorem thm_estimategeodesics}
% We already know by Example
% \ref{exm_negativecurvatureFuller} that $F (g _{0}, \beta)
% = \frac{1}{k}$ and hence by Theorem \ref{thm:invariantF}
% $F (g _{1}, \beta) = \frac{1}{k}$.
% Let $U$ denote the open subset of $L _{\beta} X$ consisting
% of loops with $g _{1}$-length less then $2  L _{\beta}$.
% By ~\cite[Lemma 4.1]{cite_SavelyevFuller}, 
% for all $\epsilon >0$ sufficiently small, for any $g'$, $C
% ^{2}$ $\epsilon $-close to $g$ the following holds.
% Set $g' _{t} = (1-t) \cdot g _{1} + t \cdot g'$, for
% $t \in [0,1]$, then $$ \widetilde{N} = \mathcal O(\{g'
% _{t}\}, \beta) \cap (U \times [0,1])$$ is an open and compact subset of $\mathcal O(\{g' _{t}\}, \beta)$. 	
%
% Now set $$N_1 = \widetilde{N} \cap (L _{\beta} X \times \{1\}), $$ and $N_0= \mathcal O
% (g _{1}, \beta)$. 
% By the invariance property \eqref{eq_basicinvariance} of the Fuller index, we then have
% that $$\frac{1}{k} = i (N_0, R ^{\lambda _{g _{1}}})
% = i (N_1, R ^{\lambda _{g'}}). $$
% On the other hand, by construction and by index computations
% as in ~\cite[Section 2]{cite_SavelyevFuller}),
% we get:
% $$i (N_1, R ^{\lambda _{g'}}) =  \displaystyle \sum_{o \in
% \mathcal O(g',\beta) \cap U} \frac{(-1) ^{\operatorname {morse}  (o)}} {\mult (o)}.$$
% If $\epsilon $ is chosen to be sufficiently small then
% $\mathcal O(g',\beta) \cap U = \mathcal O _{2  L _{\beta}} (g', \beta
% )$. So that we are done. 
% \qed
\section{Proof of Theorem \ref{theorem_Eulercharacteristic}}
\label{sec_Proof of Theorem theorem_Eulercharacteristic}
For simplicity, as the full argument is already lengthy, we prove the theorem under the assumption that $g$ is Riemannian. In
the Finsler case we must use a more modern variational setup
for the Hilbert manifold of $H^1$-loops.  In the Finsler
case the energy functional is generally only $C^1$, rather than $C^2$, on the
Hilbert loop space.  Nevertheless the required Palais--Smale compactness and
critical point theory for closed Finsler geodesics are standard; see
Caponio--Javaloyes--Masiello
\cite{cite_CaponioJavaloyesMasiello2011},
Mercuri \cite{cite_Mercuri1977}, and Lu \cite{cite_Lu2012}.  With these replacements, the compactness argument for
end-incompressible classes, and the Fuller-index invariance
argument go through without substantial change.

The circle
$S^1$ acts on $L_\beta X$ by reparametrization:
\[
        (s\cdot\gamma)(t)=\gamma(t+s).
\]
If some $\gamma\in L_\beta X$ had nontrivial stabilizer, then $\gamma$
would factor through a nontrivial covering $S^1\to S^1$,
contradicting indivisibility.
Therefore the $S^1$-action on $L_\beta X$ is free. Hence
\[
        H^{S^1}_*(L_\beta X,\mathbb Z)
        \cong
        H_*(L_\beta X/S^1,\mathbb Z),
\]
and we will construct a finite total rank model for the latter.

Let
\[
        \mathcal L_\beta X=W^{1,2}(S^1,X)_\beta
\]
be the Hilbert manifold of $H^1$-loops in the free homotopy class $\beta$.
Consider the energy functional
\[
        \mathcal E_g:\mathcal L_\beta X\to \mathbb R,
        \qquad
        \mathcal E_g(\gamma)=\int_{S^1}|\dot\gamma(t)|_g^2\,dt.
\]
Since $S(g,\beta)$ is compact, $\mathcal E_g$ is bounded above on the critical
set of $\mathcal E_g$ in class $\beta$.  Choose
\[
        E_0>\max_{\gamma\in S(g,\beta)}\mathcal E_g(\gamma)=M^2,
\]
which exists by the $\beta$-tautness assumption.  Set
\[
        \mathcal U_{E_0}
        =
        \{\gamma\in\mathcal L_\beta X\mid \mathcal E_g(\gamma)<E_0\}.
\]

\begin{lemma}\label{lem_inclusion}
The inclusion
\[
        \mathcal U_{E_0}\hookrightarrow \mathcal L_\beta X
\]
is an $S^1$-equivariant homotopy equivalence.  Consequently,
\[
        \mathcal U_{E_0}/S^1\simeq \mathcal L_\beta X/S^1.
\]
Moreover, the inclusion of smooth loops into the Hilbert loop space is an
$S^1$-equivariant homotopy equivalence, and hence
\[
        \mathcal L_\beta X/S^1\simeq L_\beta X/S^1.
\]
Therefore
\[
        \mathcal U_{E_0}/S^1\simeq L_\beta X/S^1.
\]
\end{lemma}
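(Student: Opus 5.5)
The plan is to prove that $\mathcal U_{E_0}$ is an $S^1$-equivariant deformation retract of $\mathcal L_\beta X$, using the negative gradient flow of $\mathcal E_g$. We then pass to quotients and invoke the standard comparison between $H^1$ loops and continuous loops. Since the critical points of $\mathcal E_g$ in class $\beta$ are exactly the constant speed closed geodesics $S(g,\beta)$, the choice $E_0>M^2$ means there are no critical values in $[E_0,\infty)$.

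The argument has three steps.

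\textbf{Step 1: Palais--Smale on sublevel sets.} We verify that $\mathcal E_g$ satisfies the Palais--Smale condition on each sublevel set $\{\mathcal E_g\le c\}$. Take a sequence $\gamma_k$ with bounded energy and $\|d\mathcal E_g(\gamma_k)\|\to 0$.
\begin{itemize}
\item Bounded energy gives bounded length.
\item By Lemma \ref{lemma_containedinCompact}, the images of the $\gamma_k$ lie in a fixed compact $K\subset X$. This is exactly where end incompressibility of $\beta$ is used.
\item Inside a compact $K$, the classical argument (Klingenberg; for Finsler, Mercuri and Caponio--Javaloyes--Masiello) applies. One embeds a neighborhood of $K$ isometrically into some $\mathbb R^N$ and uses weak $H^1$ compactness with strong $C^0$ convergence. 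The gradient condition then upgrades this to strong $H^1$ convergence to a critical point.
\end{itemize}
Completeness of $\mathcal L_\beta X$ near $K$ follows from completeness of $g$.

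\textbf{Step 2: the gradient flow.} We use the pseudo-gradient (or actual gradient) flow of $\mathcal E_g$ for the $H^1$ metric induced by $g$. This metric is $S^1$-invariant, so the flow is $S^1$-equivariant.

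On $\{\mathcal E_g\ge E_0\}$ there are no critical points. Palais--Smale on each band $\{E_0\le\mathcal E_g\le c\}$ then gives $\|\nabla\mathcal E_g\|\ge\delta(c)>0$ there, so every trajectory starting at energy $\le c$ enters $\mathcal U_{E_0}$ in a finite time, bounded by $(c-E_0)/\delta(c)^2$. The flow is also defined for all forward time, because $\mathcal E_g$ decreases along it and the sublevel sets are bounded in the compact-image sense above.

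\textbf{Step 3: the retraction.} Let $\tau(\gamma)$ be the first time the trajectory of $\gamma$ reaches energy $E_0-\eta$, for a small $\eta>0$ chosen so that $E_0-\eta>M^2$. Set $\tau(\gamma)=0$ when $\mathcal E_g(\gamma)\le E_0-\eta$. This hitting time is continuous, because the level $E_0-\eta$ is regular and the flow crosses it transversally. Flowing each $\gamma$ for time $\tau(\gamma)$ gives an equivariant deformation retraction of $\mathcal L_\beta X$ onto $\{\mathcal E_g\le E_0-\eta\}$. The same flow retracts $\mathcal U_{E_0}$ onto this set. Hence the inclusion is an $S^1$-homotopy equivalence.

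\textbf{Passing to quotients.} The $S^1$ action is free, so these equivariant homotopies descend to homotopies of the orbit spaces, giving $\mathcal U_{E_0}/S^1\simeq\mathcal L_\beta X/S^1$.

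\textbf{Smooth versus $H^1$ loops.} For the remaining claim we use Palais' theorem on inclusions of loop spaces. The inclusions
\[
C^\infty\subset H^1\subset C^0
\]
are homotopy equivalences, and they can be made equivariant via the $S^1$-equivariant smoothing by convolution, $\gamma\mapsto \gamma*\rho_\epsilon$ followed by projection back to $X$. This smoothing is defined on loops lying in a compact set and commutes with rotations. Equivariant homotopy equivalences between spaces with free $S^1$ action descend to quotients, so $\mathcal L_\beta X/S^1\simeq L_\beta X/S^1$. Composing with the previous equivalence gives $\mathcal U_{E_0}/S^1\simeq L_\beta X/S^1$.

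The main obstacle is the Palais--Smale and flow-completeness step for noncompact $X$. Lemma \ref{lemma_containedinCompact} reduces this to the compact case, but the retraction must be built carefully so that hitting times stay continuous on a possibly unbounded energy range. I would build it locally on each $\{\mathcal E_g<c\}$ and take the union.
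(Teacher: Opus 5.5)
Your proposal is correct and follows the paper's proof. Both arguments get Palais--Smale on sublevels from Lemma~\ref{lemma_containedinCompact}, use the $S^1$-equivariant negative gradient flow of $\mathcal E_g$ with no critical points in $\{\mathcal E_g\geq E_0\}$, pass to quotients, and use equivariant smoothing for $C^\infty\subset H^1$. Your explicit hitting-time retraction onto $\{\mathcal E_g\le E_0-\eta\}$ just makes precise the paper's ``let $A\to\infty$'' step. Freeness of the action is not needed to descend equivariant homotopies to orbit spaces, though invoking it does no harm.
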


\begin{proof}
The energy functional $\mathcal E_g$ is smooth and $S^1$-invariant.  We
equip $\mathcal L_\beta X$ with the standard $S^1$-invariant Hilbert
metric induced by $g$.  With respect to this metric, the negative gradient
flow of $\mathcal E_g$ is $S^1$-equivariant.

We first recall the Palais--Smale compactness needed for the deformation
lemma.  Let $\{\gamma_j\}\subset \mathcal L_\beta X$ be a Palais--Smale
sequence for $\mathcal E_g$ with
\[
        \mathcal E_g(\gamma_j)\leq A.
\]
Then
\[
        \ell_g(\gamma_j)
        \leq
        \mathcal E_g(\gamma_j)^{1/2}
        \leq
        A^{1/2}.
\]
Since $\beta\in\pi_1^{\mathrm{inc}}(X)$,
Lemma~\ref{lemma_containedinCompact} implies that the images
$\gamma_j(S^1)$ lie in a fixed compact subset of $X$.  On this compact
set, the usual Hilbert-loop compactness argument for the geodesic energy gives
a strongly convergent subsequence in $\mathcal L_\beta X$.  Thus
$\mathcal E_g$ satisfies the Palais--Smale condition on sublevels.

By the choice of $E_0$, the functional $\mathcal E_g$ has no critical
points in
\[
        \{\mathcal E_g\geq E_0\}.
\]
Therefore, for every $A>E_0$, Palais' deformation lemma gives an
$S^1$-equivariant homotopy equivalence
\[
        \{\mathcal E_g<E_0\}\hookrightarrow \{\mathcal E_g<A\}.
\]
Letting $A\to\infty$, or equivalently applying the negative gradient flow
on the exhaustion by sublevels, we obtain an $S^1$-equivariant homotopy
equivalence
\[
        \mathcal U_{E_0}\simeq_{S^1}\mathcal L_\beta X.
\]
Passing to quotients gives
\[
        \mathcal U_{E_0}/S^1\simeq \mathcal L_\beta X/S^1.
\]

Finally, the inclusion
\[
        C^\infty(S^1,X)_\beta\hookrightarrow W^{1,2}(S^1,X)_\beta
\]
is an $S^1$-equivariant homotopy equivalence by standard equivariant
smoothing approximation for loops.  Since $L_\beta X$ denotes the same free
homotopy component in the smooth or compact-open loop space, we get
\[
        \mathcal L_\beta X\simeq_{S^1}L_\beta X.
\]
Combining the quotient equivalences gives
\[
        \mathcal U_{E_0}/S^1\simeq L_\beta X/S^1.
\]
\end{proof}

We now compare the Fuller index with the equivariant Morse model.  The only
subtlety is that after perturbing $g$ to a regular metric $g'$, the full
class $\beta$ geodesic set of $g'$ need not be compact.  We therefore work in
an isolating neighborhood of the compact lifted orbit block coming from $g$.

Let $g_S$ denote the Sasaki metric on the fixed contact manifold $C$ induced
by $g$ under the convention of Section~\ref{sec:Definition of F}.  Define the
$S^1$-invariant loop set $\widehat U_{\sqrt{E_0}}$ to consist of all loops
$\gamma:S^1\to C$ such that
\[
        [q\circ\gamma]=\beta,
        \qquad
        \ell_{g_S}(\gamma)<\sqrt{E_0}.
\]
Let
\[
        U_{\sqrt{E_0}}=\widehat U_{\sqrt{E_0}}/S^1
\]
be its image in the orbit-string quotient.  By \eqref{eq_sasaki} and the choice of $E_0$, this is an isolating
neighborhood of
\[
        N_0=\widetilde{\mathcal O}(g,\beta)
\]
in the sense of Lemma~\ref{lem_isolation}. It is also
$g _{S}$-controlled, in the sense of Definition
\ref{def_hcontrolled}, with the lower length bound supplied by the nontrivial
projected class $\beta$ and $K$ provided by Lemma
\ref{lemma_containedinCompact}. 

Let $W$ be as in Lemma~\ref{lem_isolation}, and let
$\epsilon$ be the corresponding Fuller constant for the isolating data
\[
        (N_0,U_{\sqrt{E_0}},W).
\]

Choose a $\beta$-regular complete metric $g'$ sufficiently
$C^2$-close $\epsilon'$-close to $g$
so that, on the fixed manifold $C$, the vector field $R^{\lambda_{g'}}$ is
$C^0\epsilon$-close to $R^{\lambda_g}$.  Choose $E'$ with
\[
        \max_{\gamma\in
				N_0}\ell_{g_S}(\gamma)<\sqrt{E'}<\sqrt{E_0}.
\]
Set $\widehat U$ to be the set of loops $\gamma:S^1\to C$ such that
\[
        [q\circ\gamma]=\beta,
        \qquad
        \ell_{g'_S}(\gamma)<\sqrt{E'},
\]
and set
\[
        U=\widehat U/S^1.
\]
After shrinking the perturbation if necessary, we may assume
\[
        W\subset U\subset U_{\sqrt{E_0}}.
\]
Thus Lemma~\ref{lem_continuation} gives
\[
        F(g,\beta)
        =
        i(N_1,R^{\lambda_{g'}}),
\]
where
\[
        N_1=U\cap\mathcal O(R^{\lambda_{g'}}).
\]

It remains to compute the last Fuller index.  Let
\[
        \mathcal U_{E'}^{g'}
        =
        \{\gamma\in\mathcal L_\beta X
        \mid \mathcal E_{g'}(\gamma)<E'\}.
\]
For a constant-speed $g'$-geodesic, its canonical lift has
$g'_S$-length equal to its $g'$-length.  Hence the critical circles of
$\mathcal E_{g'}$ in $\mathcal U_{E'}^{g'}$ are in one-to-one correspondence
with the elements of $N_1$.  Since $g'$ is $\beta$-regular, this critical set
is a finite union of nondegenerate critical circles
\[
        C_o\simeq S^1.
\]

Although the $S^1$-action on $\mathcal L_\beta X$ is only continuous,
each individual rotation is a smooth isometry preserving $\mathcal
E_{g'}$, so the negative gradient flow of $\mathcal E_{g'}$ commutes
with the action, and the deformations below are $S^1$-equivariant.
The Morse--Bott handlebody $\mathcal{MB}$ obtained from the unstable disk
bundles of these critical circles is $S^1$-invariant, and the standard
Hilbert-manifold Morse--Bott deformation theorem gives an $S^1$-equivariant
homotopy equivalence
\[
        \mathcal{MB}\simeq_{S^1}\mathcal U_{E'}^{g'}.
\]
After passing to the quotient by $S^1$, the unstable disk bundle over
$C_o$ contributes one cell of dimension $\operatorname{morse}(o)$;
here freeness of the action (indivisibility of $\beta$) is used, as
the quotient of the disk bundle by the free $S^1$-action is a single
cell. Therefore
\[
        \chi(\mathcal U_{E'}^{g'}/S^1)
        =
        \chi(\mathcal{MB}/S^1)
        =
        \sum_{o\in N_1}(-1)^{\operatorname{morse}(o)}.
\]

We now compare the $g'$-sublevel with the Hilbert sublevel for $g$.  Choose
numbers $a,b$  such that
\[
        M^2<a <E'<b<E_0
\]
and such that $\mathcal E_g$ has no critical points on
\[
        \{a\leq \mathcal E_g\leq b\}.
\]

For $\epsilon'$ small enough we have $\mathcal U^g_a \subset
\mathcal U^{g'}_{E'} \subset \mathcal U^g_b$, so that by
Lemma~\ref{lem_deformg'} the inclusion
\[
        \mathcal U_{E'}^{g'}\hookrightarrow \mathcal U_b^g
\]
is an $S^1$-equivariant homotopy equivalence.  Since all critical points of $\mathcal E_g$ have energy at most $M^2<b$, the inclusion
\[
        \mathcal U_b^g\hookrightarrow \mathcal U_{E_0}^g
\]
is also an $S^1$-equivariant homotopy equivalence.  Thus
\[
        \mathcal U_{E'}^{g'}\simeq_{S^1}\mathcal U_{E_0}^g.
\]
By Lemma~\ref{lem_inclusion},
\[
        \mathcal U_{E_0}^g/S^1\simeq L_\beta X/S^1.
\]
Consequently
\[
        \chi(\mathcal U_{E'}^{g'}/S^1)=\chi(L_\beta X/S^1).
\]
In particular $H_*^{S^1}(L_\beta X,\mathbb Z)$ has finite total rank.

On the other hand, since $g'$ is $\beta$-regular, the Fuller index formula,
summed over the finitely many lifted free homotopy classes met by $N_1$, gives
\[
        i(N_1,R^{\lambda_{g'}})
        =
        \sum_{o\in N_1}
        \frac{(-1)^{\operatorname{morse}(o)}}{\operatorname{mult}(o)}.
\]
Since $\beta$ is indivisible, all multiplicities in this class are $1$.  Hence
\[
        i(N_1,R^{\lambda_{g'}})
        =
        \chi(\mathcal U_{E'}^{g'}/S^1)
        =
        \chi(L_\beta X/S^1).
\]
Therefore
\[
        F(g,\beta)=\chi(L_\beta X/S^1)=\chi^{S^1}(L_\beta X).
\]
\qed

We state the following elementary lemma separately, since it may be of independent interest.

\begin{lemma}\label{lem_deformg'}
For a metric $h$ and a number $r$, set
\[
        \mathcal U_r^h
        =
        \{\gamma\in\mathcal L_\beta X\mid \mathcal E_h(\gamma)<r\}.
\]
Let $a<b$, and suppose that $\mathcal E_g$ satisfies the Palais--Smale
condition on the band
\[
        \mathcal B=\{a\leq \mathcal E_g\leq b\}
\]
and has no critical points there.  Let $g'$ be sufficiently $C^2$-close to
$g$, so that $\nabla\mathcal E_{g'}$ is $C^0$-close to
$\nabla\mathcal E_g$ on $\mathcal B$.  Suppose
\[
        \mathcal U_a^g\subset \mathcal C\subset \mathcal U_b^g,
\]
where
\[
        \mathcal C=\{\mathcal E_{g'}<c\}
\]
is an $S^1$-invariant sublevel set of $\mathcal E_{g'}$.  Then the
inclusion
\[
        \mathcal C\hookrightarrow \mathcal U_b^g
\]
is an $S^1$-equivariant homotopy equivalence.
\end{lemma}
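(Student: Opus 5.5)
We prove Lemma~\ref{lem_deformg'} by building an explicit $S^1$-equivariant deformation retraction of $\mathcal U_b^g$ onto $\mathcal C$. It uses the flow of a vector field that is gradient-like for both $\mathcal E_g$ and $\mathcal E_{g'}$ on the band $\mathcal B$.

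\emph{Step 1: a uniform lower bound for the gradient.} Since $\mathcal E_g$ satisfies Palais--Smale on $\mathcal B$ and has no critical points there, there is a $\delta>0$ with $\|\nabla\mathcal E_g\|\geq\delta$ on $\mathcal B$. Indeed, otherwise a sequence in $\mathcal B$ with gradients tending to zero would be Palais--Smale and would converge to a critical point in the closed set $\mathcal B$. Choose $g'$ close enough that $\|\nabla\mathcal E_{g'}-\nabla\mathcal E_g\|<\delta/2$ on $\mathcal B$. Then, with $V=-\nabla\mathcal E_g$, we get on $\mathcal B$
\[
d\mathcal E_g(V)\leq-\delta^2,\qquad d\mathcal E_{g'}(V)\leq -\delta^2+\tfrac{\delta}{2}\|\nabla \mathcal E_g\|.
\]
This second bound is not obviously negative. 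So instead we take $V=-\nabla\mathcal E_g/\|\nabla\mathcal E_g\|$ on $\mathcal B$. Then $d\mathcal E_g(V)=-\|\nabla\mathcal E_g\|\le -\delta$ and $d\mathcal E_{g'}(V)\leq -\|\nabla\mathcal E_g\|+\delta/2\leq-\delta/2$. We cut $V$ off by an $S^1$-invariant function of $\mathcal E_g$ that equals $1$ on a slightly smaller band and $0$ below $a$. Both $\mathcal E_g$ and the Hilbert metric are $S^1$-invariant, so the flow $\phi_s$ of $V$ is $S^1$-equivariant. Because $V$ has unit length, the flow exists for all forward time on the band until it exits through $\{\mathcal E_g=a\}$.

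\emph{Step 2: the deformation.} For $\gamma\in\mathcal U_b^g$, define $\tau(\gamma)=\inf\{s\geq0 \mid \mathcal E_{g'}(\phi_s\gamma)<c\}$, interpreting this suitably so that the hitting time of the level is used. Along the flow, $\mathcal E_g$ decreases at rate at least $\delta$. So after time at most $(b-a)/\delta$ the trajectory enters $\mathcal U_a^g\subset\mathcal C$, and $\tau$ is finite and bounded. On trajectories in the band, $\mathcal E_{g'}$ is strictly decreasing, with $d\mathcal E_{g'}(V)\le-\delta/2$. Hence the level $\{\mathcal E_{g'}=c\}$ is crossed transversally and only once. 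Also $\mathcal C$ is forward invariant: once $\mathcal E_{g'}<c$ it stays so, because outside the band below $a$ we are already inside $\mathcal U^g_a\subset\mathcal C$. From transversality, $\tau$ is continuous on $\mathcal U_b^g\setminus\mathcal C$ via the implicit function theorem, and it is $S^1$-invariant.

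\emph{Step 3: the homotopy.} The map $H_t(\gamma)=\phi_{t\tau(\gamma)}(\gamma)$ gives an $S^1$-equivariant homotopy from the identity to a retraction $r\colon\mathcal U_b^g\to\overline{\mathcal C}$ that preserves $\mathcal C$. To land in the open set $\mathcal C$ itself, we replace $c$ by flowing for time $\tau+\eta(\gamma)$, where $\eta>0$ is small. Alternatively, we note that $\mathcal C$ deformation retracts equivariantly onto its intersection with a collar $\{\mathcal E_{g'}\leq c-\eta\}\cup\mathcal U_a^g$ by the same flow. Then the inclusion and $r$ are mutually inverse up to equivariant homotopy.

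The main obstacle is continuity of $\tau$ at the boundary of $\mathcal C$ and near the region where the cutoff acts. We handle it by the strict transversality $d\mathcal E_{g'}(V)\le-\delta/2$, which holds on all of $\mathcal B\supset\overline{\mathcal C}\setminus\mathcal U_a^g$. This requires the inclusions to be arranged with a small margin, e.g.\ $\mathcal U_{a'}^g\subset\mathcal C$ for some $a'<a$, so that the cutoff region lies inside $\mathcal C$.
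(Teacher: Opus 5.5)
Your argument is sound in its main line. It uses the same key estimate as the paper, $\|\nabla\mathcal E_{g'}-\nabla\mathcal E_g\|<\delta/2$ on $\mathcal B$, but gives the two functionals the opposite roles.

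The paper flows along $-\nabla\mathcal E_{g'}$. Then $\mathcal C$, being an $\mathcal E_{g'}$-sublevel, is forward invariant for free, and $\mathcal E_g$ decreases along the flow on $\mathcal B$, so it acts as a barrier at $\{\mathcal E_g=b\}$. The conclusion is then delegated to Palais's deformation argument for $\mathcal E_{g'}$ on $\mathcal U_b^g-\mathcal C$.

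You flow along the normalized $-\nabla\mathcal E_g$ instead. Then $\mathcal E_g$ drops at rate at least $\delta$, so the arrival time is bounded uniformly by $(b-a)/\delta$; along $-\nabla\mathcal E_{g'}$ it is controlled only by $\mathcal E_{g'}(\gamma)-c$. Meanwhile $\mathcal E_{g'}$ drops at rate at least $\delta/2$, which gives transversality to $\{\mathcal E_{g'}=c\}$ and the forward invariance of $\mathcal C$; this is no longer automatic, but you prove it correctly. Your version is self-contained, and you rightly flag the open-versus-closed issue at $\partial\mathcal C$ that the paper's ``deformation retraction onto $\mathcal C$'' passes over.

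The step that fails as written is the cutoff. A cutoff in $\mathcal E_g$ that equals $1$ only on a smaller band and vanishes at $\{\mathcal E_g=a\}$ destroys the rates $\delta$ and $\delta/2$ on part of $\mathcal B$. The hypothesis $\mathcal U_a^g\subset\mathcal C$ allows $\{\mathcal E_{g'}=c\}$ to touch $\{\mathcal E_g=a\}$. Such touching points are zeros of your cut-off field. Trajectories outside $\mathcal C$ can converge to them without ever reaching $\overline{\mathcal C}$, so $\tau$ is infinite or discontinuous there, and no extra push by $\eta$ moves these points into $\mathcal C$. Your proposed margin does not help: $\mathcal U_{a'}^g\subset\mathcal C$ for $a'<a$ already follows from $\mathcal U_a^g\subset\mathcal C$. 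Moving the cutoff below $a$ would instead require $\nabla\mathcal E_g\neq0$ there, which is not assumed.

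The repair is to cut off in $\|\nabla\mathcal E_g\|$ rather than in $\mathcal E_g$. Take $V=-\chi(\|\nabla\mathcal E_g\|)\,\nabla\mathcal E_g/\|\nabla\mathcal E_g\|$ with $\chi=1$ on $[\delta,\infty)$ and $\chi=0$ on $[0,\delta/2]$. This is a bounded, locally Lipschitz, $S^1$-equivariant field on all of $\mathcal L_\beta X$. It equals your unit field on all of $\mathcal B$, and $\mathcal E_g$ is nonincreasing along it everywhere. Consequently:
\begin{itemize}
\item $\mathcal U_b^g$ is forward invariant;
\item $\mathcal C$ is forward invariant by your transversality argument, since $\{\mathcal E_{g'}=c\}\cap\mathcal U_b^g\subset\mathcal B$;
\item every trajectory starting in $\mathcal U_b^g$ lies in $\mathcal C$ at all times $s>(b-a)/\delta$.
\end{itemize}
Hence, for any $T>(b-a)/\delta$, the time-$T$ map $\phi_T\colon\mathcal U_b^g\to\mathcal C$ is an equivariant homotopy inverse of the inclusion, with both homotopies given by $\phi_{tT}$, $t\in[0,1]$. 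The hitting time $\tau$, the implicit function theorem and the collar step then become unnecessary.
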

\begin{proof}
By the Palais--Smale condition and the absence of critical points on
$\mathcal B$, there exists $\mu>0$ such that
\begin{equation}\label{eq_deltaEg}
        \|\nabla\mathcal E_g\|\geq \mu
\end{equation}
on $\mathcal B$.  For $g'$ sufficiently close to $g$, we have
\begin{equation}\label{eq_delta2Eg}
        \|\nabla\mathcal E_{g'}-\nabla\mathcal E_g\|<\mu/2
\end{equation}
on the same band.  Therefore
\[
        \langle \nabla\mathcal E_g,\nabla\mathcal E_{g'}\rangle>0
\]
on $\mathcal B$.

Since
\[
        \mathcal U_a^g\subset \mathcal C\subset \mathcal U_b^g,
\]
we have
\[
        \mathcal U_b^g-\mathcal C
        \subset
        \mathcal U_b^g-\mathcal U_a^g
        \subset
        \mathcal B.
\]
Now along a negative $\mathcal E_{g'}$-gradient trajectory 
lying in $\mathcal{B} $ we have: 
\[
        \frac{d}{dt}\mathcal E_g
        =
        -\langle \nabla\mathcal E_g,\nabla\mathcal E_{g'}\rangle
        <0.
\]
Note that the topological boundary of $\mathcal U^g_b - \mathcal C$
is contained in $\{\mathcal E_g = b\} \cup \{\mathcal E_{g'} =
c\}$. Consequently the negative $\mathcal E_{g'}$-gradient flow 
line of an element in $\mathcal U_b^g-\mathcal C $  
cannot exit $\mathcal U_b^g$ before hitting
$\mathcal C$.  Moreover, $\mathcal C$ is forward-invariant because it is a
sublevel set of $\mathcal E_{g'}$.

The estimates above also imply that $\mathcal E_{g'}$ has no critical points
on $\mathcal U_b^g-\mathcal C$.  The standard deformation argument of
Palais~\cite[Section 10, Proposition (2)]{cite_Palais1963}, applied to
$\mathcal E_{g'}$ on the region $\mathcal U_b^g-\mathcal C$, therefore
gives a deformation retraction of $\mathcal U_b^g$ onto $\mathcal C$.  The
energies and the Hilbert metrics are $S^1$-invariant, so the gradient flow is
$S^1$-equivariant.  Hence the deformation retraction is $S^1$-equivariant.
\end{proof}

\section{Product Formula} \label{sec_Product Formula}
In this section we prove a kind of product formula for the
$F$-count. This is used in all our applications. 
\begin{theorem}[Product formula] \label{thm:EulerProduct}
Let
\[
        Z\hookrightarrow (X,g) \xrightarrow{p} (Y, g _{Y})
\]
be a $\beta$-periodic fibration, with $Y$ closed,
connected and such that all smooth closed contractible $g
_{Y}$-geodesics in $Y$ are constant. Then $p$ is $\beta $-taut.
Fix $y_0\in Y$, set $Z_0=p^{-1}(y_0)$, and choose 
\[
        \beta_Z\in \pi_1^{\mathrm{inc}}(Z_0)
\]
such that
\[
        (i_{y_0})_*(\beta_Z)=\beta,
\]
where $i_{y_0}:Z_0\hookrightarrow X$ is the inclusion.
Denote by $g _{y _{0}}$ the restriction of $g$ to the fiber
of $y _{0}$.  Let
\[
        H=\pi_1(Y,y_0)
\]
act on $\pi_1^{\mathrm{inc}}(Z_{y_0})$ by holonomy, and set
\[
        S_{y_0}=H\cdot\beta_Z.
\]
Then   
\[
        F(g,\beta)
        =
        (-1) ^{\dim Y} \cdot \operatorname{card}(S_{y_0}) \cdot \chi(Y) \cdot
				F(g_{y_0},\beta_Z).
\] 
\end{theorem}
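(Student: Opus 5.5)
We first establish $\beta$-tautness of $g$. Since $p$ is a Riemannian submersion with $\nabla\pi^{\mathrm{vert}}=0$, a $g$-geodesic $o$ decomposes: its horizontal part projects to a $g_Y$-geodesic $p\circ o$, and its vertical part is a geodesic in the fibre. As $\beta$ is a fiber class, $p\circ o$ is a contractible closed $g_Y$-geodesic, hence constant by hypothesis. So every class $\beta$ closed $g$-geodesic lies in a single fiber $Z_y$ and is a $g_y$-geodesic there, in some fiber class $\alpha\in\pi_1^{inc}(Z_y)$ mapping to $\beta$. Parallel transport along paths in $Y$ gives isometries between fibers (the parallel condition makes the holonomy isometric). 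By Lemma \ref{lem_fiber-representatives-holonomy-orbit}, the possible $\alpha$ form the finite holonomy orbit $S_{y}$. Thus
\[
S(g,\beta)\cong \bigcup_{y\in Y}\ \bigcup_{\alpha\in S_y} S(g_y,\alpha),
\]
a locally trivial bundle over the compact $Y$ with fiber $\bigsqcup_{\alpha\in S_{y_0}}S(g_{y_0},\alpha)$. Each piece is compact by tautness of $g_{y_0}$, so $S(g,\beta)$ is compact.

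To compute $F$, we note that all $S(g_{y_0},\alpha)$ for $\alpha\in S_{y_0}$ are carried to one another by holonomy isometries. Hence $F(g_{y_0},\alpha)=F(g_{y_0},\beta_Z)$ for all $\alpha$ in the orbit, which accounts for the factor $\operatorname{card}(S_{y_0})$. The remaining task is local, over the Fuller index of the lifted orbit set $\widetilde{\mathcal O}(g,\beta)$, which fibers over $Y$. We perturb abstractly at the level of Reeb vector fields, as Fuller theory permits (Appendix \ref{appendix:Fuller}), rather than via metrics.

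First we perturb the fiberwise family to make each fiber orbit set nondegenerate: choose a fiberwise regularizing perturbation over a neighbourhood, using the parallel structure to keep it compatible. Then we add a horizontal perturbation coming from a Morse function $f$ on $Y$, namely a small vector field $-\epsilon\,\widetilde{\nabla f}$ lifted horizontally on $C$, whose effect is to kill all orbits except those in fibers over $\operatorname{crit}(f)$. This is exactly the local fibered computation deferred to Appendix \ref{sec_A local fibered calculation}: near a critical point $y$ of index $k$, a nondegenerate fiber orbit $o$ with index contribution $(-1)^{\operatorname{morse}(o)}/\mult(o)$ persists as a unique nondegenerate orbit. Its Fuller index is multiplied by the sign $(-1)^{\dim Y-k}$ or $(-1)^{k}$; the Poincaré-map linearization acquires the base Hessian block, and the exact sign convention of the Fuller index produces a global $(-1)^{\dim Y}$. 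The multiplicity is unchanged, because the perturbation is $S^1$-invariant on the fiber orbit.

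Summing over critical points gives $\sum_y(-1)^{\mathrm{ind}(y)}=\chi(Y)$, times $(-1)^{\dim Y}$, times $\sum_{\alpha}F(g_{y_0},\alpha)$. Continuation invariance of the Fuller index, via isolating neighbourhoods as in Lemma \ref{lem_continuation}, equates this with $F(g,\beta)$. We expect the main obstacle to be this local fibered sign and isolation computation: we must show the combined perturbation stays $C^0$-close within an isolating neighbourhood, and that no new orbits of class $\beta$ appear. Orbits drifting in the base are excluded because the horizontal component strictly decreases $f$, which is impossible along a closed orbit unless $df=0$. The Poincaré-return determinant then splits as fiber times base.
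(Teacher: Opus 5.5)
Your tautness argument and the reduction to $\operatorname{card}(S_{y_0})\cdot F(g_{y_0},\beta_Z)$ via holonomy isometries are fine and agree with the paper. There is, however, a genuine gap in the localization step.

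\textbf{The localization fails.} With only $R^{\lambda_g}-\epsilon\nabla_{g_S}\widetilde f$, closed orbits need not be vertical, and your monotonicity argument for $f$ breaks down. At a unit covector with horizontal part $\eta\neq 0$, the Reeb field itself moves the base point in $Y$, with velocity $\eta^\sharp$. So along an orbit
\[
\frac{d}{d\tau}f=df(\eta^\sharp)-\epsilon|\nabla f|^2,
\]
which has no sign.

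Concretely, work in a local product chart $p^{-1}(D)\cong Z_y\times D$. The identities \eqref{eq_dpRlambda} and \eqref{eq_dPNablaGS} hold for your field as well, so $P=1-|\eta|^2$ is \emph{conserved*} rather than decreasing. The $Y$-part evolves by $\dot y=\eta^\sharp-\epsilon\nabla f(y)$, with $\eta$ parallel along $y(\tau)$. Hence for \emph{every} $y\in Y$, not only critical ones, the choice $\eta=\epsilon\,df_y$ makes the base point stationary. Every closed geodesic of $(Z_y,g_y)$, traversed at speed $\sqrt{1-\epsilon^2|df_y|^2}$, is then a closed orbit of your field. These orbits are $O(\epsilon)$-close to the unperturbed block, so they lie in the isolating neighbourhood.

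Consequences:
\begin{itemize}
\item The continued orbit set is still a family over all of $Y$.
\item The return map has eigenvalue $1$ in the $\eta$-directions, so it is degenerate there.
\item Neither of your claims holds: perturbations are not ``killed'' away from $\operatorname{Crit}(f)$, and a fiber orbit does not ``persist as a unique nondegenerate orbit''.
\item Your preliminary fiberwise regularization cannot repair this, since it does not act on the $\eta$-directions.
\end{itemize}

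\textbf{The missing idea.} The paper adds the verticality function $P(\xi)=|\pi^{\mathrm{vert}}v_\xi|_g^2$ and uses
\[
V_\varepsilon=R^{\lambda_g}-\varepsilon\nabla_{g_S}(P+\widetilde f).
\]
Since $dP(R^{\lambda_g})=0$ and $dP(\nabla_{g_S}\widetilde f)=0$, which follow from $\nabla\pi^{\mathrm{vert}}=0$, one gets $\frac{d}{d\tau}P=-\varepsilon|\nabla_{g_S}P|^2$ along orbits. So $P$ is a Lyapunov function, and a closed orbit must lie in $\operatorname{Crit}(P)$. Choosing the isolating neighbourhood away from $\{P=0\}$, closed orbits lie in the vertical locus $\{P=1\}$. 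There the Reeb field has no base component, and only then does your $f$-monotonicity argument become valid.

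\textbf{The sign.} The same $P$-term gives the $\dim Y$ horizontal-covector directions a nondegenerate, outward-radial linearization. This is exactly the source of the factor $(-1)^{\dim Y}$ in Lemma~\ref{lem_fuller-local-product-geodesic}:
\[
i=(-1)^{\operatorname{morse}(y)}\cdot(-1)^{\dim Y}\cdot i_{\mathrm{fib}}.
\]
Your appeal to ``the exact sign convention of the Fuller index'', hedging between $(-1)^{\dim Y-k}$ and $(-1)^k$, does not derive it. In your model those directions are degenerate, so no local sign is even defined. Globally the ambiguity happens to be harmless, since $\chi(Y)=0$ when $\dim Y$ is odd. But the nondegeneracy supplied by the $P$-term is needed for the local index computation to make sense at all.
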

\begin{proof}
We prove $\beta $-tautness. We first show that every class $\beta$ $g$-geodesic in $X$ is vertical.
Let
\[
        o:S^1\to X
\]
be a closed $g$-geodesic representing the class $\beta$.  Since $\beta$
is a fiber class, the loop
\[
        p\circ o:S^1\to Y
\]
is contractible.  Since $p$ is a parallel Riemannian submersion, the local
de Rham product decomposition for $p$ implies that $p\circ o$ is a
$g_Y$-geodesic, possibly constant.  Hence $p\circ o$ is a smooth closed
contractible $g_Y$-geodesic.  By hypothesis, every such geodesic is
constant.  Therefore $p\circ o$ is constant, and so $o$ is contained in a
single fiber
\[
        Z_y=p^{-1}(y).
\]
Thus the class $\beta$ geodesic strings in $X$ are precisely the vertical
geodesic strings whose fiber class represents $\beta$ in $X$.

It remains to prove compactness.  Let
        $o_j$
be a sequence of class $\beta$ $g$-geodesic strings in $X$.  By the
previous paragraph, each $o_j$ is contained in a fiber $Z_{y_j}$.
Since $Y$ is compact, after passing to a subsequence we may assume
\[
        y_j\to y_\infty .
\]

Choose a contractible coordinate ball
\[
        U\subset Y
\]
around $y_\infty$.  For all large $j$, we have $y_j\in U$.  Since
$p$ is a parallel Riemannian submersion, the local de Rham product theorem
gives a local Riemannian product decomposition
\[
        p^{-1}(U)\cong Z_{y_\infty}\times U.
\]
Under this identification, vertical geodesics in the fibers $Z_{y_j}$ are
identified with geodesics of the fiber metric on $Z_{y_\infty}$, and their
fiber free homotopy classes lie in the finite holonomy orbit
\[
        S_{y_\infty}.
\]

By the fiber-tautness condition in the definition of a $\beta$-periodic
fibration, the union of the geodesic string sets
\[
        \bigcup_{\alpha\in S_{y_\infty}} S(g_{y_\infty},\alpha)
\]
is compact.  Since $S_{y_\infty}$ is finite, this is a finite union of
compact sets.  Therefore, after passing to a further subsequence, the vertical
geodesic strings $o_j$, transported to the model fiber $Z_{y_\infty}$,
converge to a fiber geodesic string.  Transporting back through the local
product chart gives convergence of the original $o_j$ in the class $\beta$
geodesic string space of $X$.

Thus every sequence of class $\beta$ geodesic strings has a convergent
subsequence.  Hence the class $\beta$ geodesic string set is compact, and
$g$ is $\beta$-taut.

Let
\[
        C=S^*X
\]
be the unit cotangent bundle of $(X,g)$, with projection
\[
        \operatorname{pr}:C\to X.
\]
For this fixed metric calculation, this is equivalent to the fixed-$C$
convention of Section~\ref{sec:Definition of F}.  Let $R^{\lambda_g}$ denote
the Reeb vector field on $C$.  The $F$-invariant is computed from the compact
lifted orbit set
\[
        N_0=\widetilde{\mathcal O}(g,\beta).
\]
The idea is to localize contributions over critical points of a Morse function
on $Y$.

Let
\[
        \pi^{\mathrm{vert}}:TX\to T^{\mathrm{vert}}X=\ker(dp)
\]
be the $g$-orthogonal projection.  By assumptions on $p:
X \to Y$, 
\[
        \nabla \pi^{\mathrm{vert}}=0,
\]
where $\pi^{\mathrm{vert}}$ is viewed as a $(1,1)$-tensor field.

For $\xi\in C$, let
\[
        v_\xi\in T_{\operatorname{pr}(\xi)}X
\]
be the $g$-dual unit vector to $\xi$.  Define
\[
        P:C\to \mathbb R
\]
by
\[
        P(\xi)=|\pi^{\mathrm{vert}}v_\xi|_g^2.
\]
Thus $P=1$ on vertical unit covectors and $P=0$ on horizontal unit
covectors.

Let $f:Y\to\mathbb R$ be a Morse function.  Define
\[
        \widetilde f:C\to\mathbb R
\]
by
\[
        \widetilde f=f\circ p\circ \operatorname{pr}.
\]
Equip $C=S^*X$ with the Sasaki metric $g_S$ induced by
$g$,  as in the Proof of Theorem \ref{theorem_Eulercharacteristic}. 
We shall use the following two elementary identities.

First,
\begin{equation}\label{eq_dpRlambda}
  dP(R^{\lambda_g})=0.
\end{equation}
%.
Indeed, let $\xi(\tau)$ be a flow line of $R^{\lambda_g}$, and set
\[
        x(\tau)=\operatorname{pr}(\xi(\tau)).
\]
Then $x(\tau)$ is a unit-speed $g$-geodesic.  Let
\[
        v(\tau)=\dot x(\tau)=v_{\xi(\tau)}
\]
be its velocity vector field.  Since $x(\tau)$ is a geodesic,
\[
        \nabla_\tau v=0.
\]
Since $\nabla \pi^{\mathrm{vert}}=0$, we have
\[
        \nabla_\tau(\pi^{\mathrm{vert}}v)
        =
        \pi^{\mathrm{vert}}(\nabla_\tau v)
        =
        0.
\]
Therefore
\[
\begin{aligned}
        \frac{d}{d\tau}P(\xi(\tau))
        &=
        \frac{d}{d\tau}|\pi^{\mathrm{vert}}v(\tau)|_g^2        \\
        &=
        2\left\langle
        \nabla_\tau(\pi^{\mathrm{vert}}v),
        \pi^{\mathrm{vert}}v
        \right\rangle_g                                      \\
        &=
        0.
\end{aligned}
\]
Hence $dP(R^{\lambda_g})=0$.

Second,
\begin{equation} \label{eq_dPNablaGS}
  dP(\nabla_{g_S}\widetilde f)=0.
\end{equation}
%.
Since $\widetilde f=f\circ p\circ \operatorname{pr}$ depends only on the
base point $\operatorname{pr}(\xi)$: 
\[
        \nabla_{g_S}\widetilde f\in T^{\mathrm{hor}}C.
\]
More precisely, $\nabla_{g_S}\widetilde f$ is the horizontal lift of
\[
        \nabla_g(f\circ p)
\]
on $X$.  Let $\xi(s)$ be a horizontal curve in $C$, and set
\[
        x(s)=\operatorname{pr}(\xi(s)).
\]
By definition of the Levi-Civita horizontal distribution on $S^*X$, the
covectors $\xi(s)$ are parallel along $x(s)$.  Equivalently, their
$g$-dual vectors
\[
        v(s)=v_{\xi(s)}
\]
are parallel along $x(s)$:
\[
        \nabla_s v(s)=0.
\]
Using again $\nabla \pi^{\mathrm{vert}}=0$, we get
\[
        \nabla_s(\pi^{\mathrm{vert}}v(s))
        =
        \pi^{\mathrm{vert}}(\nabla_s v(s))
        =
        0.
\]
Hence
\[
\begin{aligned}
        \frac{d}{ds}P(\xi(s))
        &=
        \frac{d}{ds}|\pi^{\mathrm{vert}}v(s)|_g^2        \\
        &=
        2\left\langle
        \nabla_s(\pi^{\mathrm{vert}}v(s)),
        \pi^{\mathrm{vert}}v(s)
        \right\rangle_g                                \\
        &=
        0.
\end{aligned}
\]
Taking $\dot\xi(0)=\nabla_{g_S}\widetilde f(\xi)$ gives
\[
        dP_\xi(\nabla_{g_S}\widetilde f)=0.
\]
Since $\xi$ was arbitrary,
\[
        dP(\nabla_{g_S}\widetilde f)=0.
\]

Now set
\[
        V_\varepsilon
        =
        R^{\lambda_g}
        -
        \varepsilon\nabla_{g_S}(P+\widetilde f).
\]

Recall first the discussion of Fuller continuation in the Appendix.  The set
$N_0=\widetilde{\mathcal O}(g,\beta)$ is compact by $\beta$-tautness.  Choose a controlled isolating neighborhood $U$ of $N_0$ and an open
neighborhood $W$ with $\overline W\subset U$, and let $\epsilon'$ be the
corresponding Fuller constant.  Take
$\varepsilon>0$ small enough so that $V_\varepsilon$ is $C^0$ $\epsilon'$-close
to $R^{\lambda_g}$ on the compact region determined by $U$.  Lemma~\ref{lem_continuation} gives a compact continued set $N_\varepsilon$ and
\[
        F(g,\beta)
        =
        i(N_\varepsilon,V_\varepsilon).
\]
Note that
the above does not require any further properties of $V_\varepsilon$, such as
completeness.  We set up the Fuller index theory to sidestep such issues.

We now identify the closed orbits in $N_\varepsilon$.  We
show that $P$ is a Lyapunov function for $V _{\epsilon}$.
Mainly, along a trajectory $\xi(\tau)$ of $V_\varepsilon$, we have
\[
\begin{aligned}
        \frac{d}{d\tau}P(\xi(\tau))
        &=
        dP(R^{\lambda_g})
        -
        \varepsilon dP(\nabla_{g_S}P)
        -
        \varepsilon dP(\nabla_{g_S}\widetilde f)          \\
        &=
        -\varepsilon|\nabla_{g_S}P|_{g_S}^2
        \leq 0.
\end{aligned}
\]
Thus $P$ is nonincreasing along the $V_\varepsilon$-flow.

If $\xi(\tau)$ is a closed orbit of $V_\varepsilon$, then
$P(\xi(\tau))$ returns to its initial value after one period.  Since $P$
is nonincreasing, it must be constant along the orbit.  Hence
\[
        \nabla_{g_S}P=0
\]
along the orbit.

On each unit cotangent sphere, $P$ is the squared norm of the vertical
component of the corresponding unit vector.  Therefore the critical locus of
$P$ consists of the two critical loci
\[
        P=1
        \qquad\text{and}\qquad
        P=0,
\]
corresponding respectively to vertical and horizontal covectors.  At
$\varepsilon=0$, the lifted orbit set under consideration
is vertical, hence lies in $P=1$.  Taking the controlled
neighborhood $U$ (above) so that all its elements are
disjoint (as loops) from $P=0$, we get that every closed orbit in $N_\varepsilon$ lies in the vertical critical locus  $P=1$.
Consequently, every closed orbit in $N_\varepsilon$ projects to a geodesic
contained in a fiber $Z_y$.

It remains to see over which fibers these vertical orbits occur.  On $P=1$, the Reeb vector field restricts to the geodesic Reeb
field of the fiber $(Z_y,g_y)$, while $\widetilde f$ depends only on the
base point $y$.  The perturbation
\[
        -\varepsilon\nabla_{g_S}\widetilde f
\]
moves the base point in the negative gradient direction of $f$.  Therefore
a vertical closed orbit of $V_\varepsilon$ can occur only over a critical
point of $f$.  Conversely, if $y\in \operatorname{Crit}(f)$, then
\[
        \nabla_{g_S}\widetilde f=0
\]
over $Z_y$, and the vertical part of $V_\varepsilon$ is just the fiber
geodesic flow, up to the harmless $P$-term which vanishes on the vertical
critical locus.  Thus the closed orbits of $V_\varepsilon$ in
$N_\varepsilon$ are precisely the vertical closed geodesic orbits lying in
fibers over critical points of $f$.

We now compute the index.  For $y\in Y$, put
\[
        H_y=\pi_1(Y,y),
\]
acting on $\pi_1^{\mathrm{inc}}(Z_y)$ by holonomy.  Choose
\[
        \beta_Z\in \pi_1^{\mathrm{inc}}(Z_y)
\]
with
\[
        (i_y)_*(\beta_Z)=\beta.
\]
Set
\[
        S_y=H_y\cdot \beta_Z.
\]

The Fuller index is additive over disjoint compact open subsets.  Thus it remains to compute the local index contribution
$i(N_{\varepsilon,y},V_\varepsilon)$ corresponding to the isolated subset of
$N_\varepsilon$ lying over a critical point $y$ of $f$.  We defer this
computation to the Appendix as Lemma~\ref{lem_fuller-local-product-geodesic};
it yields that
\begin{equation}\label{eq_localindex}
        i(N_{\varepsilon,y},V_\varepsilon)
        =
        (-1)^{\dim Y+\operatorname{morse}(y)}
        \sum_{\alpha\in S_y}F(g_y,\alpha),
\end{equation}

Next, we show that $F(g_y,\cdot)$ is constant on $S_y$.  Indeed, let
\[
        \gamma: [0,1] \to Y
\]
be a path based at $y$.  Pulling back $p:X\to Y$ along $\gamma$ gives a
fibration over $[0,1]$.  By the same proof that $p$ is
$\beta $-taut, this
pullback is a taut deformation of the fiber metric from $(Z_y,g_y)$ back to
itself, with final fiber identified with the initial one by the holonomy map
$h_\gamma:Z_y\to Z_y$.  Fuller index invariance under taut deformation gives
\[
        F(g_y,\alpha)=F(g_y,h_{\gamma,*}\alpha).
\]
Thus $F(g_y,\alpha)$ is constant on the holonomy orbit $S_y$.  Hence
\[
        \sum_{\alpha\in S_y}F(g_y,\alpha)
        =
        \#S_y \cdot F(g_y,\beta_Z).
\]
If $y,y' \in Y$, holonomy along any path from $y$ to $y'$ identifies
$S_y$ with $S_{y'}$.  The same path gives a taut deformation between the fiber metrics, so
\[
        \sum_{\alpha\in S_{y}} F(g_{y},\alpha)
\]
is independent of $y$.  
So we get:
\begin{align*}
  F (g, \beta ) & = \sum_{y \in \operatorname {Crit} (f)}
	i(N_{\varepsilon,y}, V_\varepsilon) \\
        & =
        \sum_{y \in \operatorname {Crit} (f)}
(-1)^{\dim Y+\operatorname{morse}(y)}
        \sum_{\alpha\in S_y}F(g_y,\alpha), \\
				& = \sum_{y \in \operatorname {Crit} (f)} (-1)^{\dim
				Y+\operatorname{morse}(y)} \cdot \# S _{y _{0}}
				\cdot F (g
				_{y _{0}}, \beta _{Z}) \\
				& = (-1)^{\dim Y} \cdot \chi(Y) \cdot \# S _{y _{0}}
				\cdot F (g
				_{y _{0}}, \beta _{Z}).
\end{align*}
%. 
% We denote this common fiber contribution by
% \[
%         A.
% \]

\end{proof}
\begin{corollary}  \label{cor_productformula}
Let $X = Z \times Y$, with $Y$ closed,
connected, such that all smooth closed contractible $g
_{Y}$-geodesics in $Y$ are constant and such that $Z,
g _{Z}$ is taut. Let $\beta \in \pi _{1} ^{inc} (X) $ be
a fiber class. Then $g= g _{Z} \times g _{Y}$  is $\beta $-taut.
Furthermore,   
\[
        F(g,\beta)
        =
        (-1) ^{\dim Y} \cdot \chi(Y) \cdot
				F(g_{Z},\beta_Z).
\] 
\end{corollary}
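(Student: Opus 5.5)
This is a direct specialization of the product formula, Theorem \ref{thm:EulerProduct}, to the trivial bundle $p: X = Z\times Y \to Y$ with the product metric $g = g_Z\times g_Y$. The plan is to check that this is a $\beta$-periodic fibration in the sense of Definition \ref{def_periodic}. We then show that the holonomy orbit $S_{y_0}$ is a single point, so $\operatorname{card}(S_{y_0})=1$. The formula then reads
\[
F(g,\beta) = (-1)^{\dim Y}\chi(Y)\, F(g_{y_0},\beta_Z),
\]
and $g_{y_0}=g_Z$ under the identification $Z_{y_0}=Z\times\{y_0\}\cong Z$. The $\beta$-tautness of $g$ is part of the conclusion of Theorem \ref{thm:EulerProduct}.

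The verification of Definition \ref{def_periodic} goes item by item.
\begin{enumerate}
\item Completeness: a product of complete Riemannian metrics is complete. This needs $g_Z$ Riemannian. If the corollary is meant to allow Finsler $g_Z$, I would note that the periodic-fibration definition is Riemannian and restrict accordingly.
\item $p$ is a Riemannian submersion, and it is $g$-parallel. Indeed, $T^{\mathrm{vert}}X = TZ$ and $T^{\mathrm{hor}}X=TY$ are parallel distributions of the Levi-Civita connection of a product metric, so $\nabla\pi^{\mathrm{vert}}=0$.
\item The fibers $(Z\times\{y\}, g_Z)$ are all isometric to $(Z,g_Z)$, hence taut by hypothesis.
\item Holonomy-finiteness: the bundle is trivial, so parallel transport along any loop in $Y$ is the identity on $Z$. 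The holonomy action of $\pi_1(Y,y_0)$ on $\pi_1^{inc}(Z)$ is therefore trivial, and $S_{y_0}=\{\beta_Z\}$.
\end{enumerate}

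Two hypotheses need a little care.

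First, the condition $\pi_2(Y)=0$ is required by Definition \ref{def_periodic}, but the corollary does not state it. I would derive it from the assumption that every contractible closed $g_Y$-geodesic is constant. Suppose $\pi_2(Y)\neq 0$. Then the Birkhoff/Lusternik–Fet minimax over a nontrivial sphere class produces a nonconstant closed geodesic, and it is contractible. This contradicts the hypothesis. Alternatively, one can observe that the proof of Theorem \ref{thm:EulerProduct} uses $\pi_2(Y)=0$ only through Lemma \ref{lem_fiber-representatives-holonomy-orbit}. For a trivial product, $\pi_1(X)=\pi_1(Z)\times\pi_1(Y)$ directly, so the fiber representatives of $\beta$ form the single class $\beta_Z$ and no $\pi_2$ condition is needed.

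Second, we need the fiber representative $\beta_Z$ to be well defined. A fiber class is $\beta=i_*(\beta_Z)$. Using $\pi_1(X)=\pi_1(Z)\times\pi_1(Y)$, the conjugacy class of $(\gamma,1)$ determines the conjugacy class of $\gamma$, so $\beta_Z$ is uniquely determined by $\beta$.

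The main, though mild, obstacle is this $\pi_2(Y)=0$ point; the rest is bookkeeping. I would handle it by the direct product argument rather than the minimax route, since it avoids any appeal to existence theory for closed geodesics. The remaining ingredients are that $\chi(Y)$ is computed by the Morse function used in the proof of the theorem, and that $\operatorname{card}=1$. Substituting these gives the displayed formula.
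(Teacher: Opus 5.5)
Your proposal is correct and is the paper's argument: the paper simply declares the corollary immediate from Theorem \ref{thm:EulerProduct}. Your item-by-item check of Definition \ref{def_periodic} (trivial holonomy, hence $\operatorname{card}(S_{y_0})=1$, and $g_{y_0}=g_Z$) supplies exactly the content left implicit there. Your care about $\pi_2(Y)=0$ is warranted, since the paper does not address it for this corollary. Elsewhere, in the proof of Theorem \ref{thm:Fibration}, the paper obtains it by your first route, deducing $\pi_j(Y)=0$ for $j\ge 2$ from the no-contractible-geodesics hypothesis via Morse theory on the loop space. That route has the advantage of letting you apply the theorem as a black box rather than re-reading its proof.
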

\begin{proof} [Proof] This is immediate from the theorem. \end{proof}
\section{Proof of Theorem \ref{thm:Fibration} and
its corollaries}
\label{sec:Proof of Theorem fibration}
We prove the first part. By Cartan--Hadamard, a complete negatively curved
Riemannian manifold has at most one closed geodesic in each nontrivial free
homotopy class; moreover the absence of nontrivial periodic Jacobi fields in
negative curvature gives nondegeneracy. Existence of a geodesic in the end-incompressible class $\beta$ 
follows from Lemma~\ref{lem_existsgeod}.  Hence a complete negatively curved
metric would give a unique nondegenerate class $\beta$
geodesic string. It thus suffices to prove the second part.

We prove the second part. Suppose first that $\beta
$ is atomic, so that $\beta $ has a representative that is
a $k$-cover of a loop, $k \geq 1$,  whose corresponding class $\alpha $ is indivisible.   By the assumption that
all contractible $g _{Y}$ geodesics are constant, classical 
Morse theory Milnor~\cite{cite_MilnorMorsetheory} tells us
that $Y$ has vanishing higher homotopy groups $\pi _{j} (Y,
y _{0})$, $j \geq 2$. Thus by the long exact sequence of a fibration $i _{Z, *}: \pi
_{1} (Z, z _{0}) \to \pi _{1} (X, i_Z (z _{0})) $ is a group
injection, and $\image i _{Z,* } = \ker p _{*}$.
Now $\pi _{1} (Y, y _{0}) $ is torsion-free since it is
a finite dimensional $K (\pi _{1} (Y, y _{0}),1  )$ space, 
Hatcher ~\cite[Prop. 2.45]{cite_HatcherAlgebraic}. 
It follows that $\alpha$ is also a fiber class since $$p _{*}
(\alpha ^{k}) = 0 \implies p_* (\alpha ) =0.$$  Furthermore,
$\alpha \in \pi _{1} ^{inc} (X) $ (otherwise $\beta
$ isn't).

\begin{lemma}\label{lem_powerperiodic}
Let
\[
        Z\hookrightarrow (X,g)\xrightarrow{p}(Y,g_Y)
\]
be a $g$-parallel Riemannian submersion with complete total space, and assume
that $Y$ is connected with $\pi_2(Y)=0$.  Let
\[
        \beta,\gamma\in \pi_1^{\mathrm{inc}}(X)
\]
be fiber classes such that
\[
        \beta=\gamma^m
\]
for some $m\geq1$.  If $g$ is $\beta$-taut, then $g$ is
$\gamma$-taut and $p$ is $\gamma$-holonomy-finite.
\end{lemma}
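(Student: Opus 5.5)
Two things must be shown: $\gamma$-tautness, and finiteness of the holonomy orbit of a fiber representative of $\gamma$. Both come from comparing class $\gamma$ geodesics with their $m$-fold covers, which are class $\beta$ geodesics.

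\emph{Tautness.} Let $S(g,\gamma)$ be the space of class $\gamma$ closed geodesics. The $m$-fold iteration map $c\mapsto c^m$, with $c^m(t)=c(mt)$, sends $S(g,\gamma)$ into $S(g,\beta)$. This map is continuous and injective, and $\ell_g(c^m)=m\,\ell_g(c)$. Since $S(g,\beta)$ is compact, lengths there are bounded by some $A$. Hence every class $\gamma$ geodesic has length at most $A/m$, so $S(g,\gamma)=S_{A/m}(g,\gamma)$. This set is compact by Lemma~\ref{lem_SA}. Completeness is assumed throughout, so $g$ is $\gamma$-taut. Alternatively, the image of iteration is closed in $S(g,\beta)$, since limits of $m$-fold iterates are $m$-fold iterates, and iteration is a homeomorphism onto its image. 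Either route works; the length bound is cleaner.

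\emph{Holonomy finiteness.} Fix $y_0$ and a fiber representative $\gamma_Z\in\mathcal R_{y_0}(\gamma)$, and put $\beta_Z=\gamma_Z^m\in\mathcal R_{y_0}(\beta)$. Holonomy commutes with taking powers of free homotopy classes, so $h\cdot\gamma_Z\mapsto (h\cdot\gamma_Z)^m=h\cdot\beta_Z$. This gives a map from the orbit $H\cdot\gamma_Z$ to $\mathcal R_{y_0}(\beta)$, which is a single $H$-orbit by Lemma~\ref{lem_fiber-representatives-holonomy-orbit}. That orbit need not be finite a priori, since $p$ is not assumed $\beta$-holonomy-finite. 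I would therefore not argue through it, and instead bound the orbit of $\gamma_Z$ directly by lengths.

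Take a length-minimizing closed geodesic $c$ in class $\gamma$, which exists by Lemma~\ref{lem_existsgeod}. As in the proof of Theorem~\ref{thm:EulerProduct}, I want $c$ to be vertical. Its projection $p\circ c$ is contractible, and by the parallel submersion structure it is a $g_Y$-geodesic. Without the hypothesis that contractible $g_Y$-geodesics are constant, I would instead argue via the local de Rham splitting: minimizing length in the class forces the horizontal component to vanish, because shrinking the projected loop to a point while keeping the vertical part shortens the curve. This step needs care.

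Given a vertical $c$, each class $h\cdot\gamma_Z$ is represented in $Z_{y_0}$ by parallel transport of $c$ around a loop representing $h$. Here I use that the holonomy maps of a parallel Riemannian submersion are isometries between fibers. All these representatives have the same length $\ell(c)$. They lie in a single fiber, and they are end-incompressible because $\gamma$ is. So the finiteness argument in the proof of Lemma~\ref{lem:isometry-finite-orbits-finite-type} applies, provided $Z$ has finite type.

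\emph{Main obstacle.} Lacking a finite type hypothesis, I would instead use tautness. The transported loops are class $\gamma$ geodesics in $X$, and they lie in $S(g,\gamma)$, which is compact. Classes in $\pi_1^{\mathrm{inc}}(Z_{y_0})$ are locally constant on the loop space of $Z_{y_0}$. Hence only finitely many distinct fiber classes can occur among geodesics of $S(g,\gamma)$ lying in the fixed fiber $Z_{y_0}$, which is a closed subset, so this subset is compact. Therefore $H\cdot\gamma_Z$ is finite. I expect the main difficulty to be verifying verticality of the transported or minimizing geodesics.
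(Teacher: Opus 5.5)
Your tautness argument is the paper's. For holonomy-finiteness you end up with the paper's mechanism too: compactness of $S(g,\gamma)$ plus local constancy of fiber free homotopy classes. What differs is how you produce representatives.

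\begin{itemize}
\item You take one vertical geodesic and transport it around by holonomy isometries.
\item The paper instead picks, for each $\gamma_i\in H\cdot\gamma_Z$, a minimizing closed geodesic of the fiber $(Z_{y_0},g_{y_0})$ in the class $\gamma_i$, using Lemma~\ref{lem_existsgeod} applied to the fiber.
\end{itemize}

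This makes the step you flag as the main difficulty unnecessary. A closed geodesic of the fiber metric is vertical by construction. It is also a $g$-geodesic of $X$, because $\nabla\pi^{\mathrm{vert}}=0$ makes the fibers totally geodesic. Both you and the paper use this fact implicitly to place such loops in $S(g,\gamma)$, with Lemma~\ref{lem_fiber-representatives-holonomy-orbit} giving their class in $X$. Even on your own route, you could start from a fiber minimizer of $\gamma_Z$ rather than a minimizer in $X$.

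Your verticality sketch can be made rigorous. The horizontal distribution is parallel, hence integrable and flat, so a loop with contractible projection lifts to the Riemannian product $Z\times\widetilde Y$, where discarding the $\widetilde Y$-component shortens it. But this, together with the isometric-holonomy fact, costs global Ehresmann/de Rham input that the paper avoids. What it buys you is a uniform length bound on the orbit representatives, and that is not needed once you invoke compactness of $S(g,\gamma)$.

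Conversely, your finiteness step is a more direct version of the paper's argument. You restrict to the closed, hence compact, subset of $S(g,\gamma)$ consisting of loops in $Z_{y_0}$, on which the fiber class is locally constant. The paper instead separates classes with the open sets $\mathcal V_{U,o}\subset LX$, which needs contractibility of $U$ to tell fiber classes apart inside $p^{-1}(U)$.
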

\begin{proof}
We show $\gamma$-tautness.
Every closed $\gamma$-geodesic $o_\gamma$ has $m$-fold cover $o_\gamma^{\,m}\in S(g,\beta)$, so
\[
  \ell_g(o_\gamma)=\tfrac1m\,\ell_g(o_\gamma^{\,m})\le A/m,
  \qquad A:=\max_{o\in S(g,\beta)}\ell_g(o)<\infty,
\]
where finiteness of $A$ is $\beta$-tautness. Hence
$S(g,\gamma)=S_{A/m}(g,\gamma)$, which is compact by
Lemma~\ref{lem_SA}. 

We show $\gamma$-holonomy-finiteness. Choose
\[
        \gamma_Z\in\mathcal R(\gamma)
\]
and set
\[
        S_{y_0}=\pi_1(Y,y_0)\cdot\gamma_Z.
\]
Suppose, toward a contradiction, that $S_{y_0}$ is infinite.  For each
\[
        \gamma_i\in S_{y_0}
\]
choose a closed $g_{y_0}$-geodesic
\[
        o_i\subset Z_{y_0}
\]
representing $\gamma_i$.  Such geodesics exist by
Lemma~\ref{lem_existsgeod}.  Since the classes $\gamma_i$ are distinct in
$Z_{y_0}$, the loops $o_i$ are pairwise distinct as elements of the fiber
loop space.

We first record a separation property for these loops in the ambient loop
space $LX$.  Choose a contractible trivializing neighborhood
\[
        U\subset Y
\]
of $y_0$.  For a loop $o\subset Z_{y_0}$, let
\[
        \mathcal V_{U,o}\subset LX
\]
be the set of loops $\eta:S^1\to X$ such that
\[
        p(\eta(S^1))\subset U
\]
and such that $\eta$, regarded as a loop in $p^{-1}(U)$, is freely
homotopic in $p^{-1}(U)$ to $o$.  The set $\mathcal V_{U,o}$ is open in
$LX$: the condition that the image lie in $p^{-1}(U)$ is open, and free
homotopy classes are open components of the loop space of $p^{-1}(U)$.

Moreover, if $o_i$ and $o_j$ represent distinct free homotopy classes in
$Z_{y_0}$, then
\[
        \mathcal V_{U,o_i}\cap \mathcal V_{U,o_j}=\varnothing.
\]
Indeed, an element of the intersection would give a free homotopy between
$o_i$ and $o_j$ inside $p^{-1}(U)$.  Since $U$ is contractible, the
inclusion
\[
        Z_{y_0}\hookrightarrow p^{-1}(U)
\]
is a homotopy equivalence, and hence $o_i$ and $o_j$ would already be
freely homotopic inside $Z_{y_0}$, contrary to construction.

On the other hand, $S(g,\gamma)$ is compact.
Thus the infinite family $\{o_i\}\subset S(g,\gamma)$ has an accumulation
point $o_\infty\in S(g,\gamma)$.  Since every $o_i$ lies in the fiber
$Z_{y_0}$, the limit $o_\infty$ also lies in $Z_{y_0}$.  The open
neighborhood $\mathcal V_{U,o_\infty}$ of $o_\infty$ therefore contains
$o_i$ for infinitely many $i$, forcing these $o_i$'s to lie in the same
fiber free homotopy class as $o_\infty$.  This contradicts the fact that the
classes $\gamma_i$ are pairwise distinct.  Hence $S_{y_0}$ is finite and
$p$ is $\gamma$-holonomy-finite.

\end{proof}

By Theorem \ref{thm:EulerProduct} $p$ is $\beta $-taut. By
the lemma above $p$ is $\alpha $-holonomy finite
and hence is an $\alpha $-periodic fibration.
Now, if $\chi(Y) \neq \pm 1$ then by the product formula of
Theorem \ref{thm:EulerProduct} $F (g, \alpha
) \neq  \pm 1$, since $F (g _{y _{0}}, \alpha _{Z})$ is an integer by
Theorem \ref{theorem_Eulercharacteristic}.  
% By Theorem \ref{theorem_Eulercharacteristic}
% \begin{equation*}
% F (g, \alpha ) = \chi ^{S ^{1}}(L	_{\alpha}X).
% \end{equation*}

Now, if $X$ admits a complete metric $h$ with a unique and
nondegenerate class $\beta$ $h$-geodesic string $o _{\beta }$, then it
clearly also has a unique class $\alpha$ $h$-geodesic
string $o _{\alpha }$. Furthermore, $o _{\alpha }$ must be
nondegenerate as otherwise $o _{\beta }$, which is
a $k$-fold cover of $o _{\alpha }$ is degenerate.
Then we have: $$F (h, \alpha) = \pm 1.$$ 
But this is a contradiction, since $F (h, \alpha) = F (g,
\alpha )$ by Theorem \ref{theorem_Eulercharacteristic}.

We now prove the general case. Suppose by contradiction that $X$
admits a complete metric $h$ with a unique and nondegenerate class $\beta$
$h$-geodesic string $o _{\beta}$. By the preceding paragraph, the class $\beta$ is not atomic.    
Now $o _{\beta}$ covers a multiplicity one geodesic
string $o _{\alpha}  $ in some class $\alpha \in  \pi _{1}
^{inc} (X)$. Moreover, $o _{\alpha }$ is the unique
geodesic string in its class, otherwise $o _{\beta}$ would not be unique in its class.
We prove that $\alpha $ is indivisible, which will be a contradiction to $\beta $ not being atomic and will complete the proof.

Suppose otherwise, so that  $\alpha  _{x
_{0}}  = \rho  ^{k}$ for $k>1$ and $\rho \in \pi _{1} (X,
x _{0})$, (with the notation of Definition
\ref{definition_indivisible}). 
Let $o _{\rho}$ be a class $\rho $, closed
$h$-geodesic string (where by slight abuse $\rho$ also denotes the class
in $\pi _{1}  ^{inc} (X)$ corresponding to the based class
$\rho $.) 
It is immediate
that the $k$ cover of $o _{\rho }$, $o _{\rho } ^{k}$ represents
$\alpha$ and is a $h$-geodesic string. By the
uniqueness, ${o} _{\alpha} = o _{\rho } ^{k}$. But this contradicts
multiplicity one of ${o} _{\alpha}$. So $\alpha $ is
indivisible.

\qed
\begin{proof} [Proof of Corollary \ref{corollary_product}]
Let $g _{Z}, g _{Y}$ be complete
Riemannian metrics with $g _{Z}$ taut, and $g _{Y}$ not admitting contractible
closed geodesics. Take the product metric $g = g _{Z} \times
g _{Y}$ on $X = Z \times Y$.  For a class $\beta \in \pi
^{inc}_{1} (X)$ in the image of the inclusion $\pi _{1}
^{inc} (Z) \to \pi _{1} ^{inc} (X)$, the natural projection
$X \to Y$ is automatically a $\beta $-periodic fibration. Then the
conclusion readily follows from Theorem \ref{thm:Fibration}.
\end{proof}
\begin{proof} [Proof of Corollary \ref{corollary_Thurston0}]
The first two parts follow from Theorem
\ref{thm:Fibration}, Lemma
\ref{lem:isometry-finite-orbits-finite-type} and Example
\ref{example_phi}.
The last part is deduced as follows. The product formula
Theorem \ref{thm:EulerProduct} gives us $F (g _{\phi },
\beta ) = 0$.  So for any $\beta $-regular $g$ with finitely
many class $\beta$ $g$-geodesic strings $o$, since $\beta
$ is indivisible, by Corollary \ref{cor_chi}  we have:
$$\chi ^{S ^{1}}(L _{\beta} X _{\phi } ) = 
F (g, \beta ) = \sum _{o} (-1) ^{\operatorname {morse}  o} =0.$$ Then the
conclusion readily follows.
\end{proof}

\begin{proof} [Proof of Theorem \ref{theorem_valuesOfInvariant}]
By Theorem \ref{thm:EulerProduct} 0 is certainly a value of
the invariant $F$. We first prove that every negative rational number is
the value of the invariant. 
Let $p,q $ be positive integers. Let $Y
$ be a closed surface of genus $(p +1) >1$ with a hyperbolic
metric $g _{Y}$, let $Z$ be the
genus 2 closed surface with a hyperbolic metric $g _{Z}$ and
let $\beta _{Z} \in \pi _{1} ^{inc} (Z)$ be
the class represented by a $2 \cdot q$-fold covering of
a simple closed loop representing a generator of the fundamental group of $Z$.

Let $X = Y \times Z$ with the product metric $g = g _{Y}
\times g _{Z}$ and $p: X \to Y $  the
canonical projection. Taking $\beta = (i _{Z}) _{*} (\beta
_{Z})$, by Corollary \ref{cor_productformula}
since $(-1) ^{\dim Y} = 1$,  $$F
(g, \beta) = \chi(Y)  \cdot F (g _{Z}, \beta _{Z}) = (
- 2p) \cdot \frac{1}{2q} = - \frac{p}{q}. $$
So we
proved our first claim.

Let again $p,q$ be positive integers. Let $Y$ be closed surface of
genus $2$, with a hyperbolic metric $g _{Y}$. And let $Z$ be a manifold satisfying $F (g _{Z}, \beta
_{Z}) = -\frac{p}{2q}$ for some $\beta _{Z}$-taut metric $g
_{Z}$ on $Z$ and for some class $\beta _{Z} \in \pi _{1} ^{inc} (Z)$. This
exists by the discussion above. Let $g = g _{Y} \times
g _{Z}$ be the product metric on $Y \times Z$, and $\beta
 $ as above. Analogously to the discussion above we get:
$$F (g, \beta) = \chi(Y)  \cdot F (g _{Z}, \beta _{Z}) = (
-2) \cdot \frac{-p}{2q} =  \frac{p}{q}. $$
\end{proof}

% ------------------------------------------------------------------
% Near-flat rigidity via KAM confinement (Proposition A).
% Self-contained section for GromovFuller.tex.
%
% External references into the main paper:
%   Theorem \ref{thm:notgeodesible}   (bounded length spread => taut family)
%   Lemma   \ref{lem_SA}              (compactness of S_A(g,\beta))
% Bibliography: append nearflat-rigidity.bib to link.bib.
% ------------------------------------------------------------------

\section{Near-flat rigidity via KAM confinement}
\label{sec_nearflat}

We show that every Riemann-Finsler metric on $T^2$ sufficiently close
to a flat metric $g_0$ is taut, with every class $\beta$ closed geodesic
of length at most $(1+\delta)$ times the minimal $g_0$-length in the
class, and is taut homotopic to $g_0$.

\begin{proposition}[Near-flat rigidity]
\label{prop_nearflat}
Let $g_0$ be a flat Riemannian metric on $T^2$. There is an absolute $k_0 \in
\mathbb{N}$ such that for every $\delta
\in (0,1]$ there is $\varepsilon > 0$ with the following property. If
$g'$ is a Riemann-Finsler metric on $T^2$ with $\|g' - g_0\|_{C^{k_0}}
< \varepsilon$, then for \emph{every} nontrivial free homotopy class
$\beta$, every closed $g'$-geodesic $o$ of class $\beta$ satisfies
\begin{equation}
\label{eq_lengthbound}
\ell_{g'}(o) \;\le\; (1+\delta)\, L_\beta,
\end{equation}
where $L_\beta$ denotes the minimal $g_0$-length in class $\beta$.
Consequently $g'$ is taut, and any two metrics in the $\varepsilon$-ball
are joined by a taut deformation; in particular every such $g'$ is taut
homotopic to $g_0$.
\end{proposition}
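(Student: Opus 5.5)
The plan is to pass to the universal cover and exploit KAM invariant tori of the geodesic flow to confine every closed geodesic to a thin strip around a flat geodesic line. Then integrate the direction to get a length bound, and finally apply Theorem \ref{thm:notgeodesible} to the straight-line homotopy.

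\textbf{Step 1: KAM setup.} The geodesic flow of $g_0$ on $S^*T^2\cong T^2\times S^1$ is integrable and nondegenerate in the isoenergetic sense. Its invariant tori are $T^2\times\{\theta\}$, and the rotation vector is the direction $\theta$. For $g'$ that is $C^{k_0}$-close, with $k_0$ chosen to be the universal smoothness threshold in the isoenergetic KAM theorem for two degrees of freedom, I would invoke Moser's twist theorem or Herman's version. This gives a family of invariant Lagrangian graphs $\Lambda_\theta=\{(x,\theta+u_\theta(x))\}$, one for each Diophantine direction $\theta$, with $\|u_\theta\|_{C^0}<\eta(\varepsilon)$, where $\eta(\varepsilon)\to0$. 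Fix $\delta$ and choose a set of Diophantine directions that is $\eta$-dense in $S^1$. Each such set is chosen finite, which gives uniformity in $\varepsilon$.

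\textbf{Step 2: Confinement.} On the 3-dimensional energy surface, each invariant 2-torus separates. So a closed orbit of $g'$ with lifted direction $\theta(t)$ stays between two adjacent KAM tori. Its direction therefore satisfies $|\theta(t)-\theta_\beta|<2\eta$, where $\theta_\beta$ is the direction of $\beta$. A closed orbit must have rotation direction $\theta_\beta$, and this direction lies in some gap between KAM tori; this is the point that uses the separation in dimension 3. Integrating over the lift $\tilde o$ to $\mathbb R^2$, the displacement over one period is the vector $v_\beta$ with $|v_\beta|_{g_0}=L_\beta$. Projecting onto the unit vector $e_{\theta_\beta}$ gives
\[
L_\beta=\int \langle \dot{\tilde o},e_{\theta_\beta}\rangle\,dt\ge \cos(2\eta)\,\ell_{g_0}(o).
\]
Comparing $g_0$- and $g'$-lengths gives $\ell_{g'}(o)\le(1+C\varepsilon)\ell_{g_0}(o)$. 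Choosing $\varepsilon$ small then yields \eqref{eq_lengthbound}. This holds uniformly in $\beta$, because the angle estimate is independent of $\beta$.

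\textbf{Step 3: Tautness and homotopy.} The bound \eqref{eq_lengthbound}, combined with Lemma \ref{lem_SA}, gives compactness of $S(g',\beta)$, so $g'$ is taut. For two metrics $g_1,g_2$ in the $\varepsilon$-ball, the path $g_t=(1-t)g_1+tg_2$ stays in the ball, since the ball is convex for Finsler norms with strong convexity preserved for small $\varepsilon$. Every $g_t$ therefore satisfies \eqref{eq_lengthbound}. Every class $\beta$ geodesic has length at least $(1-C\varepsilon)L_\beta$, so the length spread is at most $(\delta+C\varepsilon)L_\beta$, uniformly in $t$. Theorem \ref{thm:notgeodesible} then gives $\beta$-tautness of $\{g_t\}$ for each $\beta$.

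\textbf{Main obstacle.} The hardest step is making Step 2 rigorous and uniform over all $\beta$. If $\theta_\beta$ is itself Diophantine and a KAM torus of that exact rotation exists, closed orbits of class $\beta$ could lie on or near it. That case is fine, since the directions still lie within $\eta$ of $\theta_\beta$. The real point is to apply KAM in the Finsler setting. My first attempt would be to write the flow near $g_0$ as an exact area-preserving twist map on a global annular section $\{x_2=0\}$, for directions not parallel to that circle, using two transverse sections to cover all directions. Then I would apply Moser's twist theorem with a fixed finite list of rotation numbers. This is what fixes the absolute $k_0$.
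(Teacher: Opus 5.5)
Your proposal is correct and follows essentially the paper's argument. KAM graph tori confine the momentum angle of every orbit to a short arc; pairing the velocity with the displacement direction $\beta/|\beta|$ then gives the length bound uniformly in $\beta$; and tautness of the interpolating family follows from Lemma \ref{lem_SA} and Theorem \ref{thm:notgeodesible}. The differences from the paper are minor, and a few points should be tightened:
\begin{itemize}
\item You fix once and for all a finite dense set of Diophantine directions. This sidesteps the measure estimate of Lemma \ref{lem_gooddense}, but you must then choose $\varepsilon$ so that the torus deviation is much smaller than the spacing between chosen directions; as written, the symbol $\eta$ is used for both.
\item You compare $g_0$- and $g'$-lengths at the end, rather than working with $g'$-unit speed as the paper does. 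This is equally fine.
\item It is a \emph{pair} of adjacent graph tori, not a single torus, that separates $T^2\times S^1$.
\item You should state why the confining band is tied to $\theta_\beta$: the average velocity $\beta/T$ points into the arc swept by the velocities, up to an $O(\varepsilon)$ error.
\end{itemize}
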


Note the order of quantifiers: $\varepsilon$ does not depend on $\beta$.
For Finsler $g'$, $\|g' - g_0\|_{C^{k_0}}$ denotes the $C^{k_0}$-distance
of $F'^2$ to $F_0^2$ on $\{\tfrac12 \le F_0 \le 2\} \subset TT^2$.

The classical theory of minimal geodesics on the torus (Morse
\cite{cite_Morse1924}, Hedlund \cite{cite_Hedlund1932}, Bangert
\cite{cite_Bangert1990MinimalGeodesics}, Mather
\cite{cite_Mather1990}) controls, for every metric, the
\emph{minimizers} in each class. Proposition \ref{prop_nearflat} is
orthogonal to it: the bound is on \emph{all} class $\beta$ closed
geodesics, not just the minimizers, and no minimizer-theoretic
argument reaches it. The mechanism is confinement by invariant tori
of the geodesic flow which are graphs over $T^2$ and hence, in pairs,
separate the unit tangent bundle. Intrinsically this graph property is the second Birkhoff theorem,
established for Lagrangian invariant tori by Bialy--Polterovich and
Polterovich \cite{cite_BialyPolterovich1989LagrangianTori,
cite_Polterovich1991SecondBirkhoff,
cite_BialyPolterovich1992BirkhoffTheorem,
cite_BialyPolterovich1992LagrangianDistributions} and, without a
dynamical assumption, by Arnaud \cite{cite_Arnaud2010Birkhoff}; for
the merely continuous invariant tori of $T^2$ geodesic flows it is due
to Bialy \cite{cite_Bialy1989AubryMather} (see also
\cite{cite_BialyPolterovich1986}). In the perturbative regime below
it comes for free from KAM. E.\ Hopf's rigidity theorem
\cite{cite_Hopf1948}---$ST^2$ is foliated by incompressible invariant
tori if and only if the metric is flat---explains why the smallness of
$\varepsilon$ cannot be removed: total confinement characterizes
flatness, and away from $g_0$ one has only the Cantor family of
surviving tori that KAM provides, which suffices for
\eqref{eq_lengthbound}.
\subsection{Setup and normalization}
\label{subsec_nearflat_setup}
Every flat torus is isometric to $\mathbb{R}^2/\Lambda$ with the
Euclidean metric; composing with a linear identification
$\mathbb{R}^2/\mathbb{Z}^2 \cong \mathbb{R}^2/\Lambda$ produces a
diffeomorphism $\phi$ of $T^2$ with $\phi^*g_0$ constant-coefficient.
Pulling the problem back by $\phi$ distorts $C^{k_0}$-balls by a fixed
constant and preserves geodesics, lengths and classes. So assume $T^2 = \mathbb{R}^2/\mathbb{Z}^2$ and $g_0 = G_0$, a constant
positive-definite matrix; to simplify notation further we take $G_0$
to be the identity, so that $g_0$ is the standard flat metric. The general case is identical, with the constants below depending on
$G_0$; see also Remark \ref{rem_nearflat_scope}. Here and below $C$
denotes a positive absolute constant whose value may change between
occurrences, while each numbered constant $C_1, C_2, C_3$ denotes a
single specific absolute constant, produced at its first appearance
and unchanged thereafter. A
nontrivial free homotopy class $\beta$ simultaneously represents a
deck vector $\beta \in \mathbb{Z}^2 - \{0\}$, and $L_\beta = |\beta
|$, the Euclidean norm.

The unit-speed geodesic flow of $g'$ is the Hamiltonian flow of
$H'(x,p) = \tfrac12 F'^*(x,p)^2$ on $\Sigma' = \{H' = \tfrac12\} \subset
T^*T^2$, where $F'^*$ is the dual norm; a periodic orbit of period $T$
projects to a closed geodesic of length $T$. The fiberwise Legendre dual
costs at most one derivative, so
\begin{equation*}
\|H' - H\|_{C^{k_0-1}(A)} \le C\varepsilon
\quad \text{on} \quad A = \{\tfrac14 \le H \le 1\},
\qquad H(p) = \tfrac12 |p|^2;
\end{equation*}
we rename $k_0 - 1 \to k_0$ and, enlarging $\varepsilon$ by this
fixed factor, assume $\|H' - H\|_{C^{k_0}(A)} \le \varepsilon$.

The flat system is integrable: $\dot x = p$, $\dot p = 0$. Its unit
level $\Sigma = \{H = \tfrac12\}$ is
foliated by the invariant tori $T_\psi = \{p = (\cos \psi, \sin
\psi)\}$, $\psi \in S^1$; on $T_\psi$ the flow is linear with
frequency vector $(\cos \psi, \sin \psi)$. Here and for the rest of
the section $S^1 := \mathbb{R}/2\pi\mathbb{Z}$ denotes the angle
circle, of total measure $2\pi$---in contrast with the parametrizing
circle $\mathbb{R}/\mathbb{Z}$ of loops used elsewhere in the paper. For $\varepsilon$ small, $D^2_p H'$ is close to the
identity, so each locus $\{p : H'(x,p) = \tfrac12\}$ is a strictly
convex curve enclosing the origin; it is therefore the radial graph of
a function $\rho(x, \cdot) : S^1 \to \mathbb{R}_{>0}$ with $\|\rho -
1\|_{C^1} \le C\varepsilon$. In particular,
\begin{equation*}
\Psi : \Sigma' \to T^2 \times S^1, \qquad (x,p) \mapsto (x, \psi := \arg
p)
\end{equation*}
is a diffeomorphism. For $(x, p) \in \Sigma'$, with $\psi = \arg p$
as above, the radial graph description of the fiber says exactly that
$p = \rho(x, \psi)\, (\cos \psi, \sin \psi)$; hence
\begin{equation*}
|p - (\cos \psi, \sin \psi)| = |\rho(x, \psi) - 1| \le
C\varepsilon.
\end{equation*}

\subsection{The KAM input}
\label{subsec_nearflat_KAM}
For a real number $r > 0$, call $\omega \in \mathbb{R}^2$
\emph{$(r,2)$-Diophantine} if
$|\langle \gamma, \omega \rangle| \ge r |\gamma|^{-2}$ for every integer
vector $\gamma \in \mathbb{Z}^2 - \{0\}$, and call $\psi \in S^1$
\emph{good} if
$(\cos \psi, \sin \psi)$ is $(r,2)$-Diophantine. The notion of good
depends on the parameter $r$, which the notation suppresses; $r$ is an
adjustable parameter, chosen small in the proof of Proposition
\ref{prop_nearflat}.

\begin{theorem}[Isoenergetic KAM]
\label{thm_isoKAM}
There is an absolute $k_0$ such that for every $r, \eta > 0$ there
is $\varepsilon_0 = \varepsilon_0(r, \eta) > 0$ with the
following property. If $\|H' - H\|_{C^{k_0}(A)} \le \varepsilon_0$,
then for every good $\psi_0$ there is a $2$-torus $T'_{\psi_0} \subset
\Sigma'$, invariant under the $H'$-flow, of the form $T'_{\psi_0} =
\{(x, P(x)) : x \in T^2\}$ with $\sup_x |P(x) - (\cos \psi_0, \sin \psi_0)| \le \eta$.
\end{theorem}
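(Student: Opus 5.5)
The plan is to read Theorem~\ref{thm_isoKAM} as a direct instance of the classical isoenergetic KAM theorem in finite differentiability. The only work is to check its hypotheses uniformly over the good angles and to convert the resulting parametrized torus into a graph. The coordinates $(x,p)\in T^2\times\mathbb{R}^2=T^*T^2$ are already action--angle coordinates for the unperturbed system $H(p)=\tfrac12|p|^2$, whose frequency map is $\omega(p)=p$.

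\emph{Step 1: nondegeneracy.} Isoenergetic nondegeneracy requires
\[
\det\begin{pmatrix} D^2H & \nabla H\\ \nabla H^{T} & 0\end{pmatrix}=-|p|^2\neq 0 ,
\]
and this holds uniformly on $A$. Hence the frequency ratio map $p\mapsto[\omega(p)]$ restricted to $\{H=\tfrac12\}$ is a uniform local diffeomorphism; here it is simply $p\mapsto\arg p$.

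\emph{Step 2: setting up the perturbation.} We multiply $H'-H$ by a cutoff supported near $A$. This changes neither the flow on $\Sigma'$ nor the $C^{k_0}$ size of the perturbation up to a constant.

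\emph{Step 3: invoking isoenergetic KAM.} We apply a finite-smoothness isoenergetic KAM theorem, for instance Salamon's $C^k$ version of Moser's theorem combined with the standard isoenergetic reduction (Broer--Huitema, Delshams--Guti\'errez). Given a $(r,2)$-Diophantine direction $\omega_0=(\cos\psi_0,\sin\psi_0)$, it produces an invariant torus on the level $\{H'=\tfrac12\}$ carrying quasi-periodic flow with frequency $\lambda\omega_0$ for some $\lambda$ near $1$. The torus is given by an embedding
\[
\theta\mapsto\bigl(\theta+u(\theta),\,p_0+v(\theta)\bigr),\qquad
\|u\|_{C^1}+\|v\|_{C^0}\le C(r)\,\varepsilon_0^{a},
\]
for some $a>0$, where $p_0=\omega_0$. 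The required smoothness is of the form $k_0>2\tau+2$ with $\tau=2$ here. Since the dimension and $\tau$ are fixed, $k_0$ is absolute, and only $\varepsilon_0$ depends on $r$ and $\eta$. Uniformity in $\psi_0$ holds because all estimates depend only on $r$, on the fixed nondegeneracy constants, and on bounds for $H$ on $A$; no quantity depends on the particular good angle.

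\emph{Step 4: graph property.} Choose $\varepsilon_0$ so small that $\|u\|_{C^1}<1$. Then $\theta\mapsto\theta+u(\theta)$ is a diffeomorphism of $T^2$ homotopic to the identity. Writing $x=\theta+u(\theta)$ and inverting, we set
\[
P(x)=p_0+v\bigl((\mathrm{id}+u)^{-1}(x)\bigr),
\]
which gives $\sup_x|P(x)-(\cos\psi_0,\sin\psi_0)|\le\|v\|_{C^0}\le\eta$ once $\varepsilon_0=\varepsilon_0(r,\eta)$ is small enough. The torus lies in $\Sigma'$ by construction of the isoenergetic version, and it is invariant under the $H'$-flow.

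\emph{Main obstacle.} The delicate point is Step 3: locating a reference that simultaneously gives the isoenergetic statement, finite $C^{k_0}$ regularity, and quantitative closeness uniform over the Diophantine class. If no single source does this, the fallback is the $n=2$ reduction. On $\Sigma'\cong T^2\times S^1$, a Poincar\'e section transverse to the flow gives an exact area-preserving twist map of an annulus, to which the Moser twist theorem (R\"ussmann/Herman finite smoothness) applies. The invariant curve with Diophantine rotation number, saturated by the flow, gives the torus. The angles $\psi_0$ near the coordinate axes would be handled by choosing the section $\{x_1=\mathrm{const}\}$ or $\{x_2=\mathrm{const}\}$ according to which component of $\omega_0$ is bounded below, which keeps the constants uniform.
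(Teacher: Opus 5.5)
Your proposal is correct and follows essentially the paper's route. The paper likewise treats the statement as an instance of the finite-smoothness isoenergetic KAM theorem (Arnold, Broer--Huitema, P\"oschel), checks only the bordered-determinant condition $-|p|^2=-1$, and offers the same alternative via Moser's twist theorem in Herman's form on four sectors where a coordinate velocity is bounded below. Your explicit inversion of $\theta\mapsto\theta+u(\theta)$ spells out the paper's remark that $C^1$-closeness to the flat sections makes the tori graphs.
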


This is the isoenergetic KAM theorem, in the graph form in which the
surviving tori are labelled by their frequency rays; see Arnold
\cite{cite_Arnold1963} for the analytic case, Broer--Huitema
\cite{cite_BroerHuitema1991} for the derivation of the isoenergetic
statement from the ordinary one, and P\"oschel \cite{cite_Poschel1982,
cite_Poschel2001} for quantitative versions in finite smoothness. The
one hypothesis to verify is isoenergetic nondegeneracy of $H$, i.e.\
the nonvanishing of the bordered determinant, which here is immediate:
\begin{equation*}
\det
\begin{pmatrix}
D^2 H & \nabla H \\
\nabla H^\top & 0
\end{pmatrix}
= -|p|^2 = -1 \quad \text{on } \{H = \tfrac12\}.
\end{equation*}
Alternatively---and this is the
source of the absolute constant $k_0$---on each of four sectors of
$S^1$ on which a coordinate velocity is bounded away from zero, the
flow on $\Sigma'$ reduces to an exact area-preserving twist map of an
annulus whose twist is, up to a positive factor, the bordered
determinant; Moser's invariant curve theorem \cite{cite_Moser1962}, in
Herman's optimal-smoothness form \cite{cite_Herman1983}, then produces
the invariant tori, and $C^1$-closeness to the flat sections makes them
automatically graphs.

\begin{lemma}[Good directions are dense at scale $r$]
\label{lem_gooddense}
There is an absolute constant $C_1$ such that, for every $r > 0$,
$\operatorname{Leb}\big(S^1 - \{\text{good}\}\big) \le C_1
r$. Since the good set is closed, every complementary gap has
length $\le C_1 r$. In particular, when $C_1 r < 2\pi$ the good set
is nonempty and every interval in $S^1$ of length $> C_1 r$
contains a good angle.
\end{lemma}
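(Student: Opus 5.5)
The plan is a union bound over the countably many resonant conditions, followed by an elementary measure argument on the circle.

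Fix $r>0$ and, for each $\gamma \in \mathbb{Z}^2 - \{0\}$, set
\[
B_\gamma = \{\psi \in S^1 : |\langle \gamma, (\cos\psi,\sin\psi)\rangle| < r|\gamma|^{-2}\}.
\]
By definition $S^1 - \{\text{good}\} = \bigcup_\gamma B_\gamma$. Write $\langle \gamma,(\cos\psi,\sin\psi)\rangle = |\gamma|\cos(\psi-\theta_\gamma)$, where $\theta_\gamma = \arg \gamma$. Then $B_\gamma$ is the set where $|\cos(\psi-\theta_\gamma)| < r|\gamma|^{-3}$.

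\emph{Measure of each $B_\gamma$.} The set where $|\cos u|<s$ consists of two arcs around $u=\pm\pi/2$. Each arc has half-width $\arcsin(\min(s,1)) \le \tfrac{\pi}{2}s$, using $\arcsin t \le \tfrac{\pi}{2} t$ on $[0,1]$. Hence
\[
\operatorname{Leb}(B_\gamma) \le 2\pi r|\gamma|^{-3}.
\]

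\emph{Summation.} Since $\sum_{\gamma\neq 0}|\gamma|^{-3}$ converges, we get
\[
\operatorname{Leb}\Big(\bigcup_\gamma B_\gamma\Big) \le 2\pi r\sum_{\gamma\ne 0}|\gamma|^{-3} =: C_1 r.
\]
For convergence, the number of lattice points with $n\le|\gamma|<n+1$ is $O(n)$, so the sum is comparable to $\sum_n n\cdot n^{-3}<\infty$. The constant $C_1$ is absolute. This is the only place where care is needed: the exponent $2$ in the Diophantine condition combined with the factor $|\gamma|$ from the inner product gives decay $|\gamma|^{-3}$. That is exactly what beats the two-dimensional lattice count. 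A weaker exponent, such as $|\gamma|^{-1}$, would give a divergent sum.

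\emph{Closedness.} The good set is $\bigcap_\gamma \{\psi : |\langle\gamma,(\cos\psi,\sin\psi)\rangle| \ge r|\gamma|^{-2}\}$. Each set in this intersection is closed because the map $\psi\mapsto\langle\gamma,(\cos\psi,\sin\psi)\rangle$ is continuous. An intersection of closed sets is closed, so the complement is open in $S^1$.

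\emph{Gaps.} An open subset of $S^1$ is a countable disjoint union of open arcs, which are the complementary gaps (or all of $S^1$). Each gap has length at most the total measure $C_1 r$. If $C_1 r<2\pi$, the complement cannot be all of $S^1$, so the good set is nonempty. Every interval of length $>C_1 r$ cannot lie inside a single gap. An interval contained in the complement would be contained in one gap by connectedness, so such an interval meets the good set, i.e. contains a good angle.
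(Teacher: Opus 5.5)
Your proof is correct and takes essentially the same route as the paper. Both arguments use a union bound over the open resonant sets $B_\gamma$, the estimate $\operatorname{Leb}(B_\gamma)\le 2\pi r|\gamma|^{-3}$ via $\arcsin s\le \tfrac{\pi}{2}s$, the annular lattice count to sum over $\gamma$, and the observation that each complementary gap is a connected component of an open set of measure at most $C_1 r$.
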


\begin{proof}
The good set is the complement of the union of the sets
\begin{equation*}
B_{\gamma} = \{\psi \in S^1 : |\langle \gamma, (\cos \psi, \sin
\psi) \rangle| < r
|\gamma|^{-2}\},
\end{equation*}
over integer vectors $\gamma \in \mathbb{Z}^2 - \{0\}$. Each
$B_{\gamma}$ is open, so the good set is closed. Writing $\gamma = |\gamma|
(\cos \theta_{\gamma}, \sin \theta_{\gamma})$, we have $\langle \gamma, (\cos \psi, \sin \psi) \rangle = |\gamma|
\cos(\psi - \theta_{\gamma})$, so that
\begin{equation*}
B_{\gamma} = \{\psi \in S^1 : |\cos(\psi - \theta_{\gamma})| < r
|\gamma|^{-3}\}.
\end{equation*}
For $0 \le s \le 1$ the set $\{\phi \in S^1 : |\cos \phi| < s\}$
is the union of two arcs of length $2 \arcsin(s)$, centered at $\pm
\pi/2$, and $\arcsin(s) \le \tfrac{\pi}{2}\, s$ by convexity of
$\arcsin$ on $[0,1]$; hence
\begin{equation*}
\operatorname{Leb}(B_{\gamma}) \le 2\pi\, r\, |\gamma|^{-3},
\end{equation*}
trivially also when $r |\gamma|^{-3} > 1$. The number of integer vectors
$\gamma$ with $n \le |\gamma| < n+1$ is at most $Cn$, so
\begin{equation*}
\operatorname{Leb}\big(S^1 - \{\text{good}\}\big) \;\le\;
\sum_{\gamma \neq 0} \operatorname{Leb}(B_{\gamma}) \;\le\; 2\pi\,
r \sum_{n \ge 1} \frac{Cn}{n^{3}} \;\le\; C_1 r.
\end{equation*}
Finally, a complementary gap is a connected component of the open set
$S^1 - \{\text{good}\}$, so its length is at most the measure $C_1 r$
of that set. When $C_1 r < 2\pi$ the complement cannot be all of
$S^1$, so the good set is nonempty, and an interval of length $> C_1
r$ cannot lie in a single gap, so it contains a good angle.
\end{proof}

\begin{lemma}[Confinement]
\label{lem_confinement}
For $r$ and $\eta$ sufficiently small and $\varepsilon \le
\varepsilon_0(r, \eta)$, along every orbit of
the $H'$-flow on $\Sigma'$ the momentum angle $\psi(t)$ stays in a fixed
arc of length $W := C_2(r + \eta)$, where one may take $C_2 = 4 C_1
+ 24$.
\end{lemma}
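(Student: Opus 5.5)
The plan is to use pairs of KAM tori as barriers. In dimension $3$, the energy surface $\Sigma' \cong T^2\times S^1$ (via $\Psi$) is cut into pieces by any two disjoint invariant tori that are graphs over $T^2$. An orbit cannot cross an invariant torus, so an orbit that starts between two such tori stays between them.

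First I will fix the angle neighbourhoods. Fix $r$ with $C_1 r<2\pi$ small, and let $\psi(0)$ be the initial momentum angle of the orbit. By Lemma \ref{lem_gooddense}, the interval $(\psi(0)-2C_1 r,\psi(0)-C_1 r)$ contains a good angle $\psi_-$. Likewise the interval $(\psi(0)+C_1 r,\psi(0)+2C_1 r)$ contains a good angle $\psi_+$. Then
\[
\psi_+-\psi_-\le 4C_1 r,\qquad \psi(0)-\psi_-\ge C_1 r,\qquad \psi_+-\psi(0)\ge C_1 r.
\]

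Next I apply Theorem \ref{thm_isoKAM} with this $r$ and with $\eta$. For $\varepsilon\le\varepsilon_0(r,\eta)$ it gives invariant graph tori $T'_{\psi_\pm}=\{(x,P_\pm(x))\}$ with $|P_\pm(x)-(\cos\psi_\pm,\sin\psi_\pm)|\le\eta$. I then convert this into a bound on the angle. The radial description gives
\[
|p-(\cos\psi,\sin\psi)|\le C\varepsilon,
\]
and combining the two estimates, the angle $\arg P_\pm(x)$ differs from $\psi_\pm$ by at most $c(\eta+\varepsilon)\le 2c\,\eta$, say with $c\le 3$. Under $\Psi$ each torus is therefore the graph of a continuous function $\Theta_\pm:T^2\to S^1$, and $\Theta_\pm$ takes values within $6\eta$ of $\psi_\pm$.

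I choose $\eta$ small relative to $r$, say $6\eta<C_1 r/2$. With that choice the graphs $\Theta_-$ and $\Theta_+$ are disjoint. Moreover, $(x(0),\psi(0))$ lies strictly between them in the short arc direction, because $\psi(0)$ is at distance at least $C_1 r$ from each of $\psi_\pm$.

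The separation step is the main obstacle, because it needs some care with the topology. The complement $T^2\times S^1-(\Gamma_-\cup\Gamma_+)$ of the two graphs has exactly two components. I identify them fibrewise: over each $x$, one component is the open arc from $\Theta_-(x)$ to $\Theta_+(x)$ of length at most $4C_1 r+12\eta$, and the other is the complementary long arc. This description is continuous in $x$, so each set is open and connected. Since the tori are invariant and the flow is a homeomorphism preserving them, the flow preserves each complementary component. The orbit starts in the short-arc component, so it stays there for all time.

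Consequently $\psi(t)$ stays in the fixed arc $[\psi_- -6\eta,\ \psi_+ +6\eta]$. This arc has length at most
\[
4C_1 r+12\eta\le C_2(r+\eta),
\]
which is the claimed bound with any $C_2\ge \max(4C_1,12)$, in particular with $C_2 = 4C_1+24$.

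The one remaining issue is the smallness conditions. The "sufficiently small" hypothesis on $r,\eta$ in the statement is meant to absorb them, namely $6\eta<C_1 r/2$ and $C_1 r$ small. If the intended reading requires $\eta$ to be arbitrary relative to $r$ instead, I would swap roles: choose the good angles at distance about $C_1 r+12\eta$ from $\psi(0)$, which Lemma \ref{lem_gooddense} also allows, at the cost of larger constants that remain within $C_2=4C_1+24$.
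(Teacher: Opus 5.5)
Your proposal is correct and follows the paper's proof essentially step for step: two good angles flanking $\psi(0)$ from Lemma~\ref{lem_gooddense}, the KAM graph tori over them with $O(\eta)$ angular error, and invariance of the band they bound. The differences are bookkeeping only. The paper offsets its search intervals from $\psi(0)$ by $8\eta$, which is your fallback, so no relation between $r$ and $\eta$ is needed. It also bounds $|\arg P(x)-\psi_0|\le 4\eta$ directly, without the radial estimate and hence without needing $C\varepsilon\le\eta$. Finally, take your search intervals closed (or slightly longer than $C_1 r$), since Lemma~\ref{lem_gooddense} only guarantees a good angle in intervals of length strictly greater than $C_1 r$.
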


\begin{proof}
In the coordinates $\Psi$, each $T'_{\psi_0}$ is the graph of
$\Theta_{\psi_0}(x) = \arg P(x)$, and for $\eta \le 1/4$ we have
$|\Theta_{\psi_0}(x) - \psi_0| \le 4\eta$: indeed $|P(x)| \ge 1 -
\eta$, so the sine of the angle between $P(x)$ and $(\cos \psi_0,
\sin \psi_0)$ is at most $\eta/(1-\eta)$, and hence the angle is at
most $4\eta$, using $\theta \le \tfrac{\pi}{2} \sin \theta$ for
$\theta \in [0, \pi/2]$. If the orbit starts on a torus it stays
there and $\psi$ oscillates by at most $8\eta$. Otherwise, the two intervals below are arcs of length $2 C_1 r$,
disjoint since $r$ and $\eta$ are small, so by Lemma
\ref{lem_gooddense} we may choose good angles
\begin{equation*}
\psi_1 \in \big(\psi(0) - 2C_1r - 8\eta,\; \psi(0) - 8\eta\big),
\qquad
\psi_2 \in \big(\psi(0) + 8\eta,\; \psi(0) + 2C_1r + 8\eta\big);
\end{equation*}
then pointwise $\Theta_{\psi_1} < \psi(0) < \Theta_{\psi_2}$ and
$\Theta_{\psi_1} < \Theta_{\psi_2}$. The band $R = \{(x, \psi) :
\Theta_{\psi_1}(x) < \psi < \Theta_{\psi_2}(x)\}$ has $\partial R =
T'_{\psi_1} \sqcup T'_{\psi_2}$. If the orbit ever left $\overline{R}$,
at the first boundary time it would lie on an invariant torus, forcing
the entire orbit---including its initial point---onto that torus, a
contradiction. So $\psi(t)$ lies in the arc $[\psi_1 - 4\eta, \psi_2 + 4\eta]$,
of length at most $4C_1 r + 24\eta \le (4C_1 + 24)(r + \eta)$.
\end{proof}

This is where two degrees of freedom enter: the tori have codimension
one in $\Sigma' \cong T^2 \times S^1$, and a \emph{pair} of graph-tori
separates it. The graph property here is free (KAM tori are $C^1$-small
graphs); the intrinsic statements are the Birkhoff-type graph
theorems for Lagrangian invariant tori
\cite{cite_BialyPolterovich1989LagrangianTori,
cite_Polterovich1991SecondBirkhoff, cite_Arnaud2010Birkhoff} and,
for merely continuous incompressible tori of geodesic flows on
$T^2$, Bialy's theorem \cite{cite_Bialy1989AubryMather}.

\subsection{Proof of Proposition \ref{prop_nearflat}}
\label{subsec_nearflat_proof}

\begin{proof}
\emph{Length bound.} Let $o$ be a closed $g'$-geodesic of class
$\beta$. As a Hamiltonian trajectory on $\Sigma'$ in the 
$\Psi$ coordinates this gives a trajectory $(x(t), p(t))$ of period $T =
\ell_{g'}(o)$, whose lift satisfies $\tilde x(T) - \tilde x(0) =
\beta$, so $\tfrac1T \int_0^T \dot x \, dt = \beta/T$
(recalling that $\beta $ also denotes the corresponding deck vector);
in particular $\beta \neq 0$, as the class is nontrivial. Now
\begin{equation*}
\dot x(t) = \partial_p H'(x,p) = p(t) + O(\varepsilon) =
(\cos \psi(t), \sin \psi(t)) + O(\varepsilon),
\end{equation*}
using $|p - (\cos \psi, \sin \psi)| \le C\varepsilon$. By Lemma
\ref{lem_confinement}, $\psi(t)$ lies in an arc $I$ of length $W$. Set
\begin{equation*}
e_* := \beta/|\beta|, \qquad \varphi_* := \arg e_*,
\end{equation*}
the direction of the total displacement. We claim $\varphi_*$ lies
within distance $4(W + C\varepsilon)$ of $I$. Indeed, fix any $\psi^0
\in I$; then $|\dot x(t) - (\cos \psi^0, \sin \psi^0)| \le W +
C\varepsilon$ for every $t$, hence the same bound holds for the
average $\beta/T$, and, as in the proof of Lemma
\ref{lem_confinement}, a vector within distance $s \le 1/4$ of a unit
vector makes angle at most $4s$ with it (smallness of $W +
C\varepsilon$ is arranged by the choice of $r, \eta, \varepsilon$
below). Consequently $|\psi(t) - \varphi_*| \le 5W + C\varepsilon$
for every $t$, and since $\psi \mapsto (\cos \psi, \sin \psi)$ is
$1$-Lipschitz,
\begin{equation*}
|\dot x(t) - e_*| \le C_3(W + \varepsilon) =: W'
\qquad \text{for all } t.
\end{equation*}
Since $|e_*| = 1$, this gives $\langle \dot x(t), e_* \rangle \ge 1 -
W'$ pointwise, so that
\begin{equation*}
T\, (1 - W') \;\le\; \int_0^T \langle \dot x(t), e_* \rangle \, dt
\;=\; \Big\langle \int_0^T \dot x(t) \, dt, \; e_* \Big\rangle
\;=\; \big\langle \beta, \beta/|\beta| \big\rangle
\;=\; |\beta| \;=\; L_\beta:
\end{equation*}
pairing with the direction of the displacement evaluates its length
exactly, with no inequality on the right. Whence $T \le L_\beta (1 -
W')^{-1}$. Choosing $r, \eta$ with
$C_3 C_2 (r + \eta) \le \delta/4$ and then $\varepsilon \le
\min\{\varepsilon_0(r, \eta),\, \delta/(4 C_3)\}$, we get
$W' \le \delta/2$, and since $(1 - \delta/2)^{-1} \le 1 + \delta$
for $\delta \in (0,1]$, the bound \eqref{eq_lengthbound} follows. The
confinement constrains all unit-speed geodesics simultaneously, so this
holds for every class $\beta$ with the same $\varepsilon$.

\emph{Tautness and the deformation.} By \eqref{eq_lengthbound},
$S(g', \beta) = S_{(1+\delta)L_\beta}(g', \beta)$, which is compact by
Lemma \ref{lem_SA}; so $g'$ is taut. For $g', g''$ in the
$\varepsilon$-ball, interpolate the squared norms, $F_t^2 = (1-t) F'^2
+ t F''^2$: strong convexity and $2$-homogeneity are preserved, and by
convexity of the ball each $g_t$ satisfies \eqref{eq_lengthbound}.
Hence for each $\beta$ the length spread satisfies
\begin{equation*}
\sup_{t \in [0,1]} \Big( \sup_{S(g_t, \beta)} \ell_{g_t} -
\inf_{S(g_t, \beta)} \ell_{g_t} \Big) \;\le\; (1+\delta)\, L_\beta
\;<\; \infty,
\end{equation*}
and Theorem \ref{thm:notgeodesible} shows $\{g_t\}$ is $\beta$-taut for
every $\beta$, i.e.\ a taut deformation. Taking $g'' = g_0$ gives the
last clause.
\end{proof}

\begin{remark}
\label{rem_nearflat_scope}
Dimension two is essential and enters only in Lemma
\ref{lem_confinement}: on $T^n$, $n \ge 3$, KAM tori have codimension
$n - 1 \ge 2$ in the energy level and separate nothing; the analogue of
Proposition \ref{prop_nearflat} is open there and adjacent to Arnold
diffusion. Nothing soft can replace the confinement: pointwise bounds
$|K| \le \varepsilon$ do not exclude long class $\beta$ geodesics---a
$\tau_\beta$-periodic plane curve of geodesic curvature $\le \varepsilon$
can execute U-turns at length cost $O(1/\varepsilon)$ apiece---so the
dynamical confinement does real work. Finally, the proof used only that
$H$ is fiberwise quadratic-convex and $x$-independent with
nonvanishing bordered determinant, so it applies verbatim to any constant-coefficient $g_0$ (as used in
the normalization above) and, more generally, to $g_0$ any flat
Minkowski (locally Minkowski Finsler) metric, whose indicatrix
replaces the round circle and supplies the radial-graph property, the
transversality in Lemma \ref{lem_gooddense}, and the twist.
\end{remark}

\section{Proof of Theorem \ref{thm_IntroTnH}}
\label{sec_Proof of Theorem thm_IntroTnH}
We prove a stronger result. 
\begin{theorem} \label{thm_IntroTn}
Let $g$ be the standard flat metric on $T ^{n}$, $n>1$, and
$\beta \in \pi _{1} (T ^{n})$ a nontrivial class.  % Suppose that $\beta \in \pi _{1} ^{inc} (X)$ is in the image
% of the inclusion $\pi _{1} ^{inc} (Z) \to \pi _{1} ^{inc} (Z
% \times Y)$.
For all $L$ sufficiently large, there exists an $\epsilon >0$
such that whenever $g'$ is a Finsler metric $C^2$ $\epsilon$-close
to $g$ and is $\beta$-regular, the following holds. 
Fix a prime $p$. Suppose that $o$ is a $g'$-geodesic
string such that:
\begin{enumerate}
\item The flat length satisfies $\ell _{g} (o) < L$. 
\item $o$ has class $\beta $.
\item $p | \mult (o) $.
\end{enumerate}
Then there is at least one other such $o$. 
\begin{itemize}
	\item Any  Finsler metric $g _{1}$ taut homotopic to the
	flat metric has the same property as $g$ above.
\item In particular, if $g _{1}$ is $\beta $-regular, is
$\beta $-taut homotopic to the flat metric and if there is a $g
_{1}$-geodesic string $o$ satisfying conditions (2), (3),
then there is another one.
\end{itemize}
\end{theorem}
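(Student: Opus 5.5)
The plan is to show that the relevant $F$-count vanishes and then argue arithmetically. The first step is to compute $F(g,\beta)=0$ for the flat metric $g$ on $T^n$, $n>1$. Write $T^n=S^1\times T^{n-1}$, choosing the splitting so that $\beta$ lies in the image of $\pi_1^{inc}(T^{n-1})\to\pi_1^{inc}(T^n)$. Concretely, pick $Y=S^1$ to be a circle factor transverse to $\beta$; an $SL(n,\mathbb Z)$ change of coordinates acts by flat isometries after adjusting the lattice, and the $F$-count is unchanged under taut deformation of constant-coefficient metrics. The flat metric on $Y=S^1$ has no nonconstant contractible closed geodesics, and flat $T^{n-1}$ is taut by Lemma \ref{lemma_nonpositiveistaut}. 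Corollary \ref{cor_productformula} then gives
\[F(g,\beta)=(-1)\cdot\chi(S^1)\cdot F(g_{T^{n-1}},\beta_Z)=0.\]

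The second step is a localization to the length window. For large $L$, the set $N_0$ of lifted class $\beta$ flat geodesic strings is compact, since all of them have length $|\beta|$. The set $U$ of Reeb orbit strings whose projection has flat length $<L$ is an isolating neighborhood, because flat class $\beta$ geodesics all have length exactly $|\beta|<L$. For $g'$ $C^2$ $\epsilon$-close to $g$, Lemma \ref{lem_continuation} then gives
\[0=F(g,\beta)=i(N_1,R^{\lambda_{g'}}),\qquad N_1=U\cap\mathcal O(R^{\lambda_{g'}}).\]
Here $N_1$ consists exactly of the class $\beta$ $g'$-strings with $\ell_g(o)<L$, after noting that flat length and lifted Sasaki length are comparable in the $C^2$-small regime. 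Since $g'$ is $\beta$-regular, the Fuller index formula yields
\[\sum_{o\in N_1}\frac{(-1)^{\operatorname{morse}(o)}}{\mult(o)}=0.\]
Getting the isolating neighborhood to line up precisely with the condition $\ell_g(o)<L$ is the main technical point. I would handle it by defining $U$ directly through flat length of the projection and checking that no $g'$-orbit has flat length near $L$: any such orbit would persist in the limit $\epsilon\to0$ as a flat geodesic of length near $L\ne|\beta|$, which is impossible.

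The third step is the arithmetic. Let $m=\operatorname{lcm}\{\mult(o)\}$, and let $p^a$ with $a\geq1$ be the maximal power of $p$ dividing some multiplicity. Multiply the identity by $m$. Every term whose multiplicity is not divisible by $p^a$ contributes a summand divisible by $p$, while each term with $p^a\mid\mult(o)$ contributes $\pm m/\mult(o)$, which is prime to $p$. If exactly one string has $p\mid\mult(o)$, then $a$ is attained exactly once, and the sum is congruent to $\pm m/\mult(o)\not\equiv0\pmod p$, a contradiction. So a second string with $p\mid\mult(o)$ exists, indeed one with $p^a\mid\mult(o)$.

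For the bullets, Theorem \ref{thm:invariantF} gives $F(g_1,\beta)=F(g,\beta)=0$ for $g_1$ $\beta$-taut homotopic to the flat metric. The same localization around the compact set $\widetilde{\mathcal O}(g_1,\beta)$ handles nearby regular metrics. In the regular case one applies the arithmetic directly to the finite sum \eqref{eq_Fgb}. Finiteness holds because a compact set of nondegenerate, hence isolated, strings is finite.
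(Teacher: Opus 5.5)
Your proposal is correct and follows the paper's proof. The steps match: $F(g,\beta)=0$ via the product/fibration formula over $S^1$, then Fuller continuation on a flat-length window, then a mod $p$ argument. Two small differences from the paper are worth noting:
\begin{itemize}
\item To compute $F(g,\beta)=0$, you reduce to an honest product through an $SL(n,\mathbb Z)$ change of coordinates and a taut flat deformation. The paper instead applies Theorem \ref{thm:EulerProduct} directly to $T^n\to S^1$, with totally geodesic fiber containing $\beta$.
\item Your lcm/maximal-valuation argument gives slightly more than the paper's product-and-Euclid argument: two strings with $p^a\mid\mult(o)$.
\end{itemize}
One point to tighten: a $U$ cut out by flat projected length alone is not $h$-controlled, so to quote Lemma \ref{lem_continuation} literally you should also impose $\ell_h(\gamma)<A_L$ with $A_L=(A/a+1)L$. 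By the bound $\ell_h\leq (A/a)\,\ell_g(\operatorname{pr}\circ\gamma)$ on closed orbits, which you already invoke, this extra condition removes no closed orbits; it is exactly how the paper defines its $U$.
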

\begin{proof} [Proof]
We work on the fixed contact manifold
\[
        C=S_g^*T^n,
\]
and let $h$ be a fixed auxiliary metric on $C$, for instance the Sasaki
metric associated to the flat metric $g$.  The nearby unit cotangent bundles
are identified with $C$ by the fixed radial convention of
Section~\ref{sec:Definition of F}; hence $R^{\lambda_{g'}}$ denotes the
corresponding pushed-forward Reeb vector field on $C$.

Let $L_\beta$ denote the flat length of a class $\beta$ geodesic, and let
$L>L_\beta$ be the length cutoff appearing in the statement.  Since $C$ is
compact and $R^{\lambda_{g'}}\to R^{\lambda_g}$ in $C^0$ for $g'$ sufficiently $C^2$-close to $g$, there is a preliminary $C^2$-neighborhood of $g$ and
constants
\[
        0<a<A<\infty
\]
such that, for every metric $g''$ in this neighborhood,
\[
        |R^{\lambda_{g''}}|_h\leq A,
        \qquad
        |\operatorname{pr}_*R^{\lambda_{g''}}|_g\geq a
\]
on $C$.  Set
\[
        A_L=\left(\frac{A}{a}+1\right)L.
\]
Define $\widehat U_{L,A_L}$ to be the set of smooth loops
$\gamma:S^1\to C$ such that
\[
        [\operatorname{pr}\circ\gamma]=\beta,
        \qquad
        \ell_g(\operatorname{pr}\circ\gamma)<L,
        \qquad
        \ell_h(\gamma)<A_L,
\]
and let
\[
        U_{L,A_L}=\widehat U_{L,A_L}/S^1
\]
be its image in the orbit-string quotient.  This is
a $h$-controlled neighborhood
of the compact lifted flat orbit block
\[
        N_0=\widetilde{\mathcal O}(g,\beta),
\]
in the sense of Definition \ref{def_hcontrolled}.
Indeed, the $h$-length upper bound is built into the definition of
$U_{L,A_L}$, and the nonzero projected class $\beta$ gives a positive lower
$h$-length bound on $\widehat U_{L,A_L}$. The suitable
compact set $K$ comes from Lemma
\ref{lemma_containedinCompact}.

Since $L>L_\beta$, we have $N_0\subset U_{L,A_L}$, and since
the flat metric has no other class $\beta$ geodesic strings,
$U_{L,A_L}$ is isolating for $N_0$.

Let $W$ be as in Lemma~\ref{lem_isolation}, and let $\epsilon$ be smaller
than both the corresponding Fuller constant for
\[
        (N_0,U_{L,A_L},W)
\]
and the size of the preliminary $C^2$-neighborhood above.
If $g'$ is sufficiently $C^2$ close to $g$, such that the corresponding Reeb vector fields are $C ^{0}$ $\epsilon $-close, Lemma~\ref{lem_continuation} gives
\begin{equation}\label{eq_vanishingF}
        F(g,\beta)
        =
        i(N_1,R^{\lambda_{g'}}),
\end{equation}
where
\[
        N_1=U_{L,A_L}\cap\mathcal O(R^{\lambda_{g'}}).
\]

We now identify the projection of $N_1$ with the length-filtered class $\beta$ geodesic
strings of $g'$.  Let $\gamma$ be a closed $R^{\lambda_{g'}}$-orbit,
parametrized by
\[
        \dot\gamma=T R^{\lambda_{g'}}(\gamma).
\]
Then
\[
        \ell_h(\gamma)
        \leq
        AT,
\]
while
\[
        \ell_g(\operatorname{pr}\circ\gamma)
        =
        T\int_0^1
        |\operatorname{pr}_*R^{\lambda_{g'}}(\gamma(t))|_g\,dt
        \geq
        aT.
\]
Thus
\[
        \ell_h(\gamma)
        \leq
        \frac{A}{a}\,
        \ell_g(\operatorname{pr}\circ\gamma).
\]
Consequently, for closed $R^{\lambda_{g'}}$-orbits,
\[
        \ell_g(\operatorname{pr}\circ\gamma)<L
        \quad\Longrightarrow\quad
        \ell_h(\gamma)<A_L.
\]
Since the definition of $U_{L,A_L}$ already includes the condition
$\ell_g(\operatorname{pr}\circ\gamma)<L$, 
the projection identifies $N_1$ bijectively with the set of class $\beta$
$g'$-geodesic strings satisfying $\ell_g(o)<L$.

Let $Z
\subset T ^{n}$ be a totally $g$-geodesic submanifold
diffeomorphic to $T ^{n-1}$, containing $\image \beta$. Let
$\alpha \in H ^{1} (T ^{n}, \mathbb{Z})$ be the Poincaré dual of $Z$ and
let $p: T ^{n} \to S ^{1}$ be the classifying map of
$\alpha$. Then clearly $p$ is a $\beta$-periodic fibration with
respect to $g$. 
By Theorem \ref{thm:EulerProduct}, $F (g,
\beta) =0$, since $\chi(S ^{1}) =0 $. And hence 
\begin{equation}\label{eq_IN=0}
  i (N _{1}, R
^{\lambda _{g'}}) = 0
\end{equation}
%.
By the previous paragraph,  we may rewrite
\eqref{eq_IN=0} as:
\begin{equation} \label{eq_vanishingmult}
\displaystyle \sum_{o \in \mathcal O_L(g', \beta )} \frac{(-1) ^{\operatorname
{morse}  (o)}} {\mult (o)} = 0,
\end{equation}
where $\mathcal O_L(g', \beta )$ denotes the set of
class $\beta $ $g'$-geodesic strings $o$ with $\ell _{g} (o)
< L$. Since $g'$ is $\beta$-regular and $N_1$ is compact,
$\mathcal O_L(g',\beta)$ is finite.  Enumerate its elements as
$o _{1}, \ldots, o _{N}$.  We then get:
\begin{equation*}
  \sum_{i} (-1) ^{\operatorname
{morse}  (o _{i})} \prod _{j \neq i} \mult (o _{j}) = 0.
\end{equation*}
%.
We show that if a prime $p$ divides the multiplicity
$\operatorname{mult}(o_k)$ for some $k \in \{1, \dots, N\}$,
then there exists at least one other index $m \neq k$ such
that $p$ divides $\operatorname{mult}(o_m)$. This will
complete the proof of the main statement. The two bulleted
addenda follow by the same argument, using that $F$ is a
taut deformation invariant, Theorem \ref{thm:invariantF},
and taking $L$ larger than the maximal flat length of a
class $\beta $ $g_1$-string (a finite quantity by tautness).

Let $M_i = \prod_{j \neq i} \operatorname{mult}(o_j)$ and
let $\sigma_i = (-1)^{\operatorname{morse}(o_i)}$. The
original equation is: $$\sum_{i=1}^{N} \sigma_i M_i = 0.$$
Suppose $p \mid \operatorname{mult}(o_k)$. Then
\begin{equation}\label{eq_pmod0}
  \sum_{i=1}^{N} \sigma_i M_i \equiv 0 \pmod p.
\end{equation}
%.
For any
index $i \neq k$, the product $M_i$ contains the factor
$\operatorname{mult}(o_k)$: $$M_i = \operatorname{mult}(o_k)
\cdot \prod_{j \neq i, k} \operatorname{mult}(o_j).$$ Since
$p \mid \operatorname{mult}(o_k)$, it follows from \eqref{eq_pmod0} that: 
$$\sigma_k M_k + 0 \equiv 0 \pmod p.$$  And so $$M_k
\equiv 0 \pmod p.$$ 

By definition, $M_k = \prod_{j \neq k}
\operatorname{mult}(o_j)$. Since $p$ is prime and $p \mid
\prod_{j \neq k} \operatorname{mult}(o_j)$, by Euclid's
Lemma, $$p \mid
\operatorname{mult}(o_j) \text{ for some } j \neq k.$$
And this completes the proof of the claim and the main
theorem.
\end{proof}
% \section{Proof of Theorem \ref{thm_introConjecture}}
% \label{sec_proofofThmthm_introConjecture}
% Let $p: T ^{n} \to S ^{1}$ be the $\beta$-periodic fibration
% with respect to $g_0$, as
% in the proof of Theorem \ref{thm_IntroTn} above.
% By Theorem \ref{thm:EulerProduct}, $F (g _{0},
% \beta) =0$. Hence, as we assumed Conjecture
% \ref{con_topological}, for any other $\beta $-regular $g$ on
% $T ^{n}$
% with finitely many $\beta$-class geodesic strings we have:
% \begin{equation*}
% \displaystyle \sum_{o \in
% \mathcal O(g,\beta)} \frac{(-1) ^{\operatorname
% {morse}  (o)}} {\mult (o)} = 0.
% \end{equation*}
% Then the conclusion of the Theorem follows by the same
% arithmetic argument as in the proof of Theorem
% \ref{thm_IntroTn} above.    
% \qed

\section{Reeb flows near the flat contact form}
\label{sec_ReebNearFlat}

We continue with the notation of Section \ref{sec_nearflat}: $T^2 =
\mathbb{R}^2/\mathbb{Z}^2$, $g_0 = G_0$ a flat metric, and $\nu, \psi,
W, W', C_3, C_4$ as in the proof of Proposition \ref{prop_nearflat}.
Set $C = S^*_{g_0}T^2$ with canonical contact form $\lambda_{g_0}$ and
contact structure $\xi = \ker \lambda_{g_0}$. Every contact form for
$\xi$ is $f\lambda_{g_0}$ with $f : C \to \mathbb{R}_{>0}$, and in the
fixed-$C$ convention of Section \ref{sec:Definition of F} one has
$\lambda_g = (1/F_g^*|_C)\, \lambda_{g_0}$ for any Riemann-Finsler
$g$. We use the flat trivialization $C \cong T^2 \times S^1 = T^3$,
$(x, \psi)$, under which free homotopy classes of loops in $C$ are
pairs $(\beta, w) \in \mathbb{Z}^2 \oplus \mathbb{Z}$, $\beta$ the
projected class and $w$ the fiber winding; the lift of a straight flat
class $\beta$ geodesic has class $(\beta, 0)$.

The first observation is that near $\lambda_{g_0}$, Reeb flows and
Finsler geodesic flows are the same objects.

\begin{lemma}[Convexity dictionary]
\label{lem_convexdictionary}
For every $k \geq 2$ there is $\varepsilon_c > 0$ such that if $\|f -
1\|_{C^k(C)} < \varepsilon_c$, then the fiberwise radial graph
\begin{equation*}
\Sigma_f \;=\; \{\, f(\xi)\, \xi \ : \ \xi \in C \,\} \;\subset\;
T^*T^2
\end{equation*}
is fiberwise strongly convex, and the $1$-homogeneous function $F^*$
with $F^*(t\xi) = t/f(\xi)$, $\xi \in C$, $t > 0$, is the dual norm of
a (in general non-reversible) Riemann-Finsler metric $g_f$ on $T^2$
with
\begin{equation*}
\lambda_{g_f} \;=\; f\, \lambda_{g_0},
\qquad \text{hence} \qquad
R^{f\lambda_{g_0}} \;=\; R^{\lambda_{g_f}} .
\end{equation*}
Moreover $g_f \to g_0$ in $C^{k-1}$ as $f \to 1$ in $C^k$.
\end{lemma}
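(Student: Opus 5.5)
Everything happens fiberwise on $T^*T^2$, so I will reduce the statement to a convexity check plus the standard Legendre correspondence between norms and their indicatrices.

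\emph{Step 1: strong convexity of $\Sigma_f$.} Fix $x$ and write the unit circle of the flat dual norm as $\theta\mapsto e(\theta)$. Then the fiber of $\Sigma_f$ over $x$ is the closed curve $c(\theta)=f(x,\theta)\,e(\theta)$. This is a radial graph of a function $C^k$-close to $1$. Its curvature is
\[
\kappa=\frac{f^2+2f_\theta^2-ff_{\theta\theta}}{(f^2+f_\theta^2)^{3/2}},
\]
which equals $1+O(\|f-1\|_{C^2})$, so it is positive for $\varepsilon_c$ small. Hence every fiber is a smooth strictly (indeed strongly) convex curve enclosing the origin. The constant $\varepsilon_c$ can be chosen uniformly because $C$ is compact.

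\emph{Step 2: the dual norm.} The function $F^*$ is positively $1$-homogeneous, by definition, and its unit level set is $\Sigma_f$. Smoothness of $F^*$ away from the zero section follows from the implicit function theorem, since the map $(\xi,t)\mapsto t\xi$ is a diffeomorphism $C\times\mathbb{R}_{>0}\to T^*T^2\setminus 0$. Positive curvature of the level set, together with homogeneity, gives that the fiberwise Hessian of $\tfrac12 (F^*)^2$ is positive definite. Legendre duality then produces a Finsler norm $F_{g_f}$ on $TT^2$ whose dual norm is $F^*$. Reversibility fails in general because $f$ need not be even in $\xi$.

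\emph{Step 3: identify the contact form.} With the fixed-$C$ convention, $\rho_{g_f}(\xi)=\xi/F^*(\xi)=f(\xi)\,\xi$, which is exactly the radial map onto $\Sigma_f$. Since $\lambda_{\mathrm{can}}$ is linear in the fiber, $\rho^*\lambda_{\mathrm{can}}=(1/F^*)\lambda_{\mathrm{can}}|_C=f\lambda_{g_0}$, which agrees with the displayed formula $\lambda_g=(1/F_g^*|_C)\lambda_{g_0}$. The Reeb fields coincide because the two contact forms are equal.

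\emph{Step 4: convergence.} We have $F^*=1/f$ on $C$, extended homogeneously, so $F^*$ converges to $F_0^*$ in $C^k$ on the annulus. The Legendre transform involves inverting $d_p\tfrac12(F^*)^2$, and this costs one derivative. Therefore $F_{g_f}^2$ converges to $F_0^2$ in $C^{k-1}$ on $\{\tfrac12\le F_0\le 2\}$. This is the stated $C^{k-1}$ convergence.

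The main obstacle is Step 2, specifically keeping quantitative control of the Legendre inverse in $C^{k-1}$, uniformly in $x$. I would handle it by applying the inverse function theorem with uniform bounds near the flat identity map.
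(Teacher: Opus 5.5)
Your proposal is correct and follows the same route as the paper's brief proof. You get strong convexity of $\Sigma_f$ from $C^2$-closeness to the flat cosphere bundle, use Legendre duality to produce $g_f$ with $F^*_{g_f}=F^*$, obtain $\lambda_{g_f}=(1/F^*|_C)\lambda_{g_0}=f\lambda_{g_0}$ from the fixed-$C$ radial convention, and deduce the $C^{k-1}$ convergence from a one-derivative loss in the Legendre transform. Your explicit polar curvature formula and the pullback computation $\rho^*\lambda_{\mathrm{can}}=f\,\lambda_{\mathrm{can}}|_C$ simply supply details the paper leaves implicit.
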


\begin{proof}
$\Sigma_f$ is $C^2$-close to the $g_0$-unit cosphere bundle, hence
fiberwise strongly convex for $\varepsilon_c$ small; by
$1$-homogeneity $F^*$ is then a co-Finsler norm, and its Legendre dual
is the Finsler metric $g_f$, with the loss of one derivative coming
from the Legendre transform. Since $F_{g_f}^*|_C = 1/f$, the fixed-$C$
convention gives $\lambda_{g_f} = (1/F^*_{g_f}|_C)\lambda_{g_0} =
f\lambda_{g_0}$, and Reeb fields of equal contact forms are equal.
Reversibility of $g_f$ ($F(-\cdot) = F(\cdot)$) holds only when $f$ is
even in the fibers, but no statement of Section \ref{sec_nearflat}
uses reversibility: the arguments there require only fiberwise strong
convexity, star-shapedness, and the twist condition.
\end{proof}

\begin{corollary}
\label{cor_reebrigidity}
Let $k_0$ be as in Theorem \ref{thm_isoKAM}. For every $\delta \in
(0,1]$ there is $\varepsilon > 0$ such that if $\|f - 1\|_{C^{k_0+1}(C)}
< \varepsilon$, then, with $R = R^{f\lambda_{g_0}}$:
\begin{enumerate}
\item Every closed $R$-orbit has free homotopy class $(\beta, 0)$ for
some nontrivial $\beta$, and period at most $(1+\delta)\,|\beta|$. In
particular $R$ has no contractible closed orbits and no closed
orbits in the fiber classes.
\item For the linear path $f_t = (1-t)f + t$, the space of pairs
$(o, t)$, with $o$ a closed class $(\beta,0)$
$R^{f_t\lambda_{g_0}}$-orbit string, is compact for every $v$: the
family $\{R^{f_t\lambda_{g_0}}\}$ is a taut deformation from $R$ to
the flat geodesic Reeb field.
\item If, in addition, all closed class $(\beta, 0)$ $R$-orbits are
nondegenerate for a fixed nontrivial $\beta \in \pi_1(T^2)$, and one
orbit string $o$ satisfies $p \mid \mult(o)$ for a prime $p$, then
there is another such orbit string.
\end{enumerate}
\end{corollary}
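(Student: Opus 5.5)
The plan is to reduce everything to the Finsler statements already proved, via the convexity dictionary. Fix $\delta\in(0,1]$. Let $\varepsilon_P(\delta)$ be the constant of Proposition \ref{prop_nearflat} for the $C^{k_0}$-ball around $g_0$. By Lemma \ref{lem_convexdictionary} with $k=k_0+1$, if $\|f-1\|_{C^{k_0+1}}$ is small then $R^{f\lambda_{g_0}}=R^{\lambda_{g_f}}$ for a Finsler metric $g_f$. Moreover $\|g_f-g_0\|_{C^{k_0}}$ is controlled by $\|f-1\|_{C^{k_0+1}}$, so we choose $\varepsilon$ so that $g_f$ lies in the $\varepsilon_P(\delta)$-ball. (Proposition \ref{prop_nearflat} is stated for general Riemann-Finsler metrics, so non-reversibility causes no harm.) Closed $R$-orbits are then exactly the Reeb lifts of closed constant speed $g_f$-geodesics, by the discussion of lifted geodesic strings in Section \ref{sec:Definition of F}. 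Their periods equal $g_f$-lengths, up to the normalization in the fixed-$C$ convention.

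For part (1), we take a closed $R$-orbit and project it to a closed $g_f$-geodesic $o$ of class $\beta$. We must have $\beta\neq 0$: otherwise the average velocity argument in the proof of Proposition \ref{prop_nearflat} gives $\dot x$ uniformly close to a fixed unit vector $e$, so $\langle\int\dot x,e\rangle\geq T(1-W')>0$, which contradicts zero displacement. This is the same confinement estimate, now applied with $\beta=0$. The length bound $(1+\delta)|\beta|$ is \eqref{eq_lengthbound}. For the fiber winding we use Lemma \ref{lem_confinement}: along the orbit, $\psi(t)$ stays in an arc of length $W<2\pi$. Hence the lift of $\psi$ to $\mathbb R$ is periodic, and the winding satisfies $w=0$. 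This step needs care about which circle coordinate is the fiber in the trivialization. Via $\Psi$, the fiber coordinate of $C$ is $\arg$ of the covector, which differs from $\psi$ by a homotopy that is $C^0$-small, so the winding is the same. Contractible orbits and fiber-class orbits are excluded because these have $\beta=0$.

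For part (2), we note that $f_t=(1-t)f+t$ satisfies $\|f_t-1\|\leq\|f-1\|$, so every $g_{f_t}$ lies in the same ball. Part (1) then gives a uniform period bound $(1+\delta)|\beta|$ for class $(\beta,0)$ orbits of all $R^{f_t\lambda_{g_0}}$. Compactness of the space of pairs $(o,t)$ follows from Arzelà–Ascoli on the compact manifold $C$, using continuity of $t\mapsto R_t$ and a positive lower period bound. The lower bound holds because $\beta\neq0$ and the projected speed is bounded below. Alternatively, we can apply Theorem \ref{thm:notgeodesible} to $\{g_{f_t}\}$ and pass to lifts. Either way this is a taut deformation to $R^{\lambda_{g_0}}$.

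For part (3), we apply the second bullet of Theorem \ref{thm_IntroTn} with $n=2$ to $g_1=g_f$, which is $\beta$-taut homotopic to the flat metric by part (2). $\beta$-regularity of $g_f$ is exactly the nondegeneracy of the class $(\beta,0)$ Reeb orbits, since by (1) all lifted class-$\beta$ geodesics are in class $(\beta,0)$. Multiplicities of orbit strings and geodesic strings agree under the lift bijection. The main obstacle is bookkeeping rather than substance: checking that the derivative loss in the Legendre transform is compatible with $k_0+1$, and that the winding $w=0$ is computed in the correct trivialization.
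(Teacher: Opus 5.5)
Your proposal is correct and follows the paper's proof. Like the paper, you pass through Lemma \ref{lem_convexdictionary} to the Finsler metrics $g_{f_t}$ in the ball of Proposition \ref{prop_nearflat}, get $w=0$ from Lemma \ref{lem_confinement} and the period bound from the average-velocity estimate, and deduce (3) from Theorem \ref{thm_IntroTnH} (the second bullet of Theorem \ref{thm_IntroTn}) via the lift bijection. Three of your choices are, if anything, more careful than the paper's write-up: the direct Arzel\`a--Ascoli argument for (2), the fixed unit vector from the confining arc in place of $\beta/|\beta|$ to rule out $\beta=0$, and the observation that (1) makes class $(\beta,0)$ nondegeneracy equivalent to $\beta$-regularity of $g_f$ (one small slip: a lower period bound needs an \emph{upper} bound on the projected speed, and it is automatic anyway because limits stay in the nontrivial class).
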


\begin{proof}
By Lemma \ref{lem_convexdictionary}, $R^{f_t\lambda_{g_0}} =
R^{\lambda_{g_t}}$ for the Finsler metrics $g_t = g_{f_t}$, which lie
in the $C^{k_0}$-ball of Proposition \ref{prop_nearflat} (the ball in
$f$ is convex, and the dictionary loses one derivative). The Reeb flow
of $\lambda_{g_t}$ is, up to the radial identification, the
Hamiltonian flow of $H_t = \tfrac12 (F_t^*)^2$ on $\Sigma_t = \{H_t =
\tfrac12\}$, with periods equal to $g_t$-lengths.

(1) By Lemma \ref{lem_confinement}, along every orbit the momentum
angle $\psi(t)$ stays in an arc, so every closed orbit has fiber
winding $w = 0$. For a closed orbit of period $T$ and projected
projected class $\beta$, the estimate in the proof of Proposition
\ref{prop_nearflat} gives $|\beta| \geq T(1 - W')$; since $1 -
W' > 0$, the case $\beta = 0$ is impossible, and $T \leq |\beta| (1 -
W')^{-1} \leq (1+\delta)|\beta|$.

(2) By the dictionary, this space of pairs is the lifted cobordism of
the class $\beta$ orbit-string spaces of the Finsler family
$\{g_t\}$; by Section
\ref{subsec:lifted-geodesic-strings} it is homeomorphic to the
downstairs cobordism, which is compact since $\{g_t\}$ is a taut
deformation by Proposition \ref{prop_nearflat}.

(3) Nondegeneracy of the class $(\beta,0)$ closed $R$-orbits is, by
the dictionary, $\beta$-regularity of $g_f$ (as in the
Introduction), and $g_f$ is $\beta$-taut homotopic to the flat metric
by Proposition \ref{prop_nearflat}. Theorem \ref{thm_IntroTnH} applied
to $g_f$ gives a second geodesic string with $p$ dividing its
multiplicity, and the lift of Section
\ref{subsec:lifted-geodesic-strings} preserves classes and
multiplicities.
\end{proof}

The $L$-cutoff form of Theorem \ref{thm_IntroTn} needs neither the
dictionary nor any convexity: its proof uses the perturbed object only
through the $C^0$-distance of Reeb fields.

\begin{theorem}
\label{thm_ReebTn}
Let $g$ be the standard flat metric on $T^n$, $n > 1$, $C =
S^*_gT^n$, and $\beta \in \pi_1(T^n)$ a nontrivial class, of flat
length $L_\beta$. For every $L > L_\beta$ there exists $\epsilon > 0$
with the following property. Let $R$ be the Reeb vector field of any
contact form $f\lambda_g$ on $C$ with
\begin{equation*}
\sup_C\, |R - R^{\lambda_g}|_h \;<\; \epsilon,
\end{equation*}
and suppose that all closed $R$-orbit strings $o$ with projected
class $\beta$ and $\ell_g(\operatorname{pr} \circ o) < L$ are
nondegenerate. Fix a prime $p$. If one such orbit string satisfies $p
\mid \mult(o)$, then there is another one.
\end{theorem}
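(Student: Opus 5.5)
The plan is to rerun the proof of Theorem \ref{thm_IntroTn}, except that the perturbed metric $g'$ is replaced by an arbitrary Reeb field $R$ that is $C^0$-close to $R^{\lambda_g}$. The only places where $g'$ entered that proof were the Fuller continuation (Lemma~\ref{lem_continuation}), which needs only $C^0$-closeness of vector fields, and the length comparison identifying $N_1$. So the argument should go through essentially verbatim.

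First, fix $L>L_\beta$ and choose $0<a<A$ together with a preliminary $C^0$-ball $\mathcal B$ of vector fields around $R^{\lambda_g}$. For every $V\in\mathcal B$ we want
\[
|V|_h\le A,\qquad |\operatorname{pr}_*V|_g\ge a
\]
on $C$. This is possible because $C$ is compact, $|R^{\lambda_g}|_h$ is bounded, and $|\operatorname{pr}_*R^{\lambda_g}|_g=1$. With $A_L=(A/a+1)L$, define the $h$-controlled neighborhood $U_{L,A_L}$ of $N_0=\widetilde{\mathcal O}(g,\beta)$ exactly as before. It is isolating, since the flat metric has no other class $\beta$ strings. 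Take $W$ from Lemma~\ref{lem_isolation}, and let $\epsilon$ be smaller than both the Fuller constant for $(N_0,U_{L,A_L},W)$ and the radius of $\mathcal B$.

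For $R$ within $\epsilon$ of $R^{\lambda_g}$, Lemma~\ref{lem_continuation} then gives
\[
i(N_1,R)=i(N_0,R^{\lambda_g})=F(g,\beta)=0,\qquad N_1=U_{L,A_L}\cap\mathcal O(R).
\]
The vanishing comes from the product formula (Theorem~\ref{thm:EulerProduct}) applied to the $\beta$-periodic fibration $T^n\to S^1$ built in the proof of Theorem~\ref{thm_IntroTn}, using $\chi(S^1)=0$. Next, the estimate $\ell_h(\gamma)\le \frac{A}{a}\,\ell_g(\operatorname{pr}\circ\gamma)$ holds for closed $R$-orbits, since it used only $|R|_h\le A$ and $|\operatorname{pr}_*R|_g\ge a$. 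Hence $N_1$ is exactly the set of closed $R$-orbit strings of projected class $\beta$ with $\ell_g(\operatorname{pr}\circ o)<L$.

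By hypothesis these strings are nondegenerate, and $N_1$ is compact, so they are finitely many. The Fuller index formula for nondegenerate orbits then writes $i(N_1,R)$ as $\sum_o \pm 1/\mult(o)$. Here the sign is the Fuller sign, given by the parity of the Lefschetz sign of the Poincaré return map; since there is no Morse index available for a general Reeb field, we use this sign in place of $(-1)^{\operatorname{morse}}$. The arithmetic only needs these signs to be $\pm1$. Clearing denominators gives
\[
\sum_i\sigma_i\prod_{j\ne i}\mult(o_j)=0 .
\]
Now apply the Euclid-lemma argument from Theorem~\ref{thm_IntroTn}. If $p\mid\mult(o_k)$, then every term with $i\neq k$ contains the factor $\mult(o_k)$, so reducing mod $p$ forces $p\mid\prod_{j\ne k}\mult(o_j)$. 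Hence $p\mid\mult(o_j)$ for some $j\ne k$, which is the second orbit string.

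The main point to check is that nothing in the continuation step used that $R$ is a geodesic Reeb field. Lemma~\ref{lem_continuation} is stated for $C^0$-close vector fields on a controlled isolating neighborhood, so it applies. The remaining care point is that the $h$-controlled condition, including the lower $h$-length bound coming from the nonzero projected class and the compact set $K$, depends only on $U_{L,A_L}$ and not on $R$. The result is uniform over all $R\in\mathcal B$.
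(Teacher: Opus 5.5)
Your proposal is correct and is essentially the paper's own proof. The paper likewise reruns the proof of Theorem~\ref{thm_IntroTn}, observing that the perturbed field enters only through four things: the bounds $|R|_h\le A$ and $|\operatorname{pr}_*R|_g\ge a$; the $C^0$ isolation and continuation of Lemmas~\ref{lem_isolation} and~\ref{lem_continuation}; the flat computation $F(g,\beta)=0$; and the estimate $\ell_h\le (A/a)\,\ell_g$ identifying $N_1$. From there, each nondegenerate string contributes $i(o)/\mult(o)$ with $i(o)\in\{\pm1\}$, and the Euclid-lemma arithmetic finishes the argument, exactly as in your plan (with the continuation implicitly run along the straight-line path $(1-t)R^{\lambda_g}+tR$, which stays in the $\epsilon$-ball and need not consist of Reeb fields).
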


\begin{proof}
The proof of Theorem \ref{thm_IntroTn} applies verbatim. It uses the
perturbed field only through: the $C^0$ bounds $|R|_h \leq A$ and
$|\operatorname{pr}_* R|_g \geq a$ on $C$, which hold for $\epsilon$
small since they hold for $R^{\lambda_g}$; the isolation of the flat
orbit block and the Fuller constant of Lemmas \ref{lem_isolation} and
\ref{lem_continuation}, which are stated for arbitrary continuous
families of vector fields; the computation $F(g,\beta) = 0$, which
concerns the flat metric alone; and the identification of $N_1$ with
the orbit strings satisfying the two constraints, whose argument
($\ell_h \leq (A/a)\,\ell_g$ on closed orbits) is independent of any
metric structure on the perturbed side. Finally, by the Appendix, a
nondegenerate orbit string contributes $i(o)/\mult(o)$ to the Fuller
index with $i(o) \in \{\pm 1\}$ the fixed point index of its return
map, which is all the arithmetic of the proof requires. For geodesic
Reeb fields $i(\widetilde o) = (-1)^{\operatorname{morse}(o)}$
(Appendix \ref{appendix:Fuller}), recovering Theorem
\ref{thm_IntroTn}.
\end{proof}

\begin{remark}
\label{rem_gray}
Corollary \ref{cor_reebrigidity} and Theorem \ref{thm_ReebTn} also
cover Reeb fields of contact forms whose contact structure $\xi'$ is
merely $C^k$-close to $\xi$: by Gray stability
\cite[Theorem~2.2.2]{cite_Geiges2008} there is a $C^{k-1}$-small
diffeomorphism $\phi$, isotopic to the identity, with $\phi_*\xi' =
\xi$; conjugating $R$ by $\phi$ produces a Reeb field for a contact
form on $(C, \xi)$ close to $\lambda_{g_0}$, and $\phi$ preserves all
free homotopy classes.
\end{remark}

\begin{remark}
\label{rem_reebglobal}
The statements of this section are unconditional only near the flat
contact form. Away from it, a contact form $f\lambda_{g_0}$ need not
be fiberwise convex, Lemma \ref{lem_convexdictionary} fails, and
``taut homotopic to $R^{\lambda_{g_0}}$ through Reeb vector fields''
is a genuinely wider equivalence than the metric one. A Reeb analogue
of Theorem \ref{thm_IntroTnH} for this wider equivalence would
require a correspondingly stronger form of Conjecture
\ref{con_topological}.
\end{remark}

\appendix
\section{Fuller index and sky catastrophes}\label{appendix:Fuller}
In this appendix we recall the part of Fuller's periodic orbit index
used in the paper, and fix the notation for orbit strings and for
continuation under small perturbations of vector fields. Some of this
material overlaps with \cite{cite_SavelyevFuller}; we nevertheless
give a brief self-contained account, especially since the noncompact setting needed here is not
treated there, and the details serve to fix notation used throughout
the paper.

Let $X$ be a smooth manifold, and fix a complete auxiliary Riemannian metric
$h$ on $X$.  Let
\[
        \mathcal V(X)
\]
denote the space of smooth vector fields on $X$, with the $C^0$-topology on compact subsets.

Let
\[
        LX=C^\infty(S^1,X)
\]
denote the smooth free loop space.  For $V\in\mathcal V(X)$, define
\[
        S(V)
        =
        \left\{
        o\in LX
        \;\middle|\; o \text{ is nonconstant, }  
        \dot o(t)=T\,V(o(t))
        \text{ for some }T>0
        \right\}.
\]
We call it the period of $o$, and write
\[
        T=T(o),
\]
it is clearly uniquely determined.

The circle $S^1=\mathbb R/\mathbb Z$ acts on $S(V)$ by
reparametrization:
\[
        (s\cdot o)(t)=o(t+s).
\]
We define the space of orbit strings by
\[
        \mathcal O(V)=S(V)/S^1.
\]
If $\beta$ is a free homotopy class in $X$, we write
\[
        S(V,\beta)\subset S(V),
        \qquad
        \mathcal O(V,\beta)\subset \mathcal O(V)
\]
for the corresponding subspaces.  When no confusion is possible, we use the
same letter $o$ for a parametrized orbit and for its associated orbit
string.

Let
\[
        q:LX\to LX/S^1
\]
be the quotient map.  If
\[
        U\subset LX/S^1,
\]
we write
\[
        \widehat U=q^{-1}(U)\subset LX.
\]

\begin{definition}\label{def_hcontrolled}
We say that $U$ is \textbf{\emph{$h$-controlled}}  if there are constants
\[
        0<\ell_-<\ell_+<\infty
\]
and a compact set $K\subset X$ such that every loop $o\in\widehat U$
satisfies
\[
        \operatorname{image}(o)\subset K,
        \qquad
        \ell_-\leq \ell_h(o)\leq \ell_+.
\]

\end{definition}

The lower bound excludes collapse to constant loops; in the applications it is
usually supplied by a nontrivial projected free homotopy class.

For an orbit string $o\in \mathcal O(V)$, its multiplicity
\[
        m(o)\in\mathbb N
\]
is the unique positive integer such that $o$ is the $m(o)$-fold cover of a
primitive orbit string.  Equivalently, if $T(o)$ is the period of $o$ and
$T_{\mathrm{prim}}(o)$ is the period of the underlying primitive orbit
string, then
\[
        m(o)=\frac{T(o)}{T_{\mathrm{prim}}(o)}.
\]

\subsection{Fuller index}

Suppose
\[
        N\subset \mathcal O(V)
\]
is finite and consists of nondegenerate orbit strings.  Fuller associates to
$(N,V)$ the rational number
\[
        i(N,V)
        =
        \sum_{o\in N}\frac{i(o)}{m(o)}.
\]
Here $i(o)$ is the fixed point index of the time-$T(o)$ return map of the
flow of $V$, computed on a local hypersurface transverse to the image of
$o$.

By a compact open subset $N$ of an orbit-string space, we mean a subset which is compact and open in the relative topology.  
In this case Fuller again associates to this a rational
index $i(N,V)$. The latter can be defined perturbatively,
and using the formula as above. 

In the special case of a nondegenerate closed geodesic
orbit $o$, we have: 
\[
        i(\widetilde{o})=(-1)^{\operatorname{morse}(o)}.
\]
Consequently, if $N$ consists of finitely many
nondegenerate closed orbits of the geodesic Reeb flow $R
^{\lambda _{g}}$ on $C$, we get:
\[
        i(N,V)
        =
        \sum_{o\in N}
        \frac{(-1)^{\operatorname{morse}(o)}}{m(o)}.
\]

\subsection{Continuation}

Let
\[
        \{V_t\}_{t\in[0,1]}\subset \mathcal V(X)
\]
be a continuous family.  Define
\[
        S(\{V_t\})
        =
        \left\{
        (o,t)\in LX\times[0,1]
        \;\middle|\;
        o\in S(V_t)
        \right\},
\]
and
\begin{equation} \label{equation_Ohomotopy}
  \mathcal O(\{V_t\}) =
        S(\{V_t\})/S^1,
\end{equation}
where $S^1$ acts on the loop coordinate by
reparametrization as before.

Let
\[
        \mathcal N\subset \mathcal O(\{V_t\})
\]
be compact and open.  Suppose that its endpoint slices are
\[
        N_0=\mathcal N\cap \mathcal O(V_0),
        \qquad
        N_1=\mathcal N\cap \mathcal O(V_1).
\]
Then
\begin{equation}\label{eq_basicinvariance}
  i(N_0,V_0)=i(N_1,V_1).
\end{equation}
We refer to this as the \textbf{\emph{basic invariance}} of the Fuller index.

For applications, we need a local version of this statement under small
perturbations.  This is a variation of
~\cite[Lemma 4.1]{cite_FullerIndex}, with the added control needed for
noncompact $X$.

\begin{lemma}[Isolation lemma]\label{lem_isolation}
Let $h$ be a complete auxiliary Riemannian metric on $X$.  Let
$V_0\in\mathcal V(X)$, be a nonsingular vector field, and let
\[
        N_0\subset \mathcal O(V_0)
\]
be compact and open.  Let
\[
        U\subset LX/S^1
\]
be an $h$-controlled open neighborhood of $N_0$ such that
\[
        \overline U\cap\mathcal O(V_0)=N_0.
\]
Let $W$ be an open neighborhood of $N_0$ satisfying
\[
        \overline W\subset U.
\]
Then there exists $\epsilon>0$ with the following property.  
If $V$ is a smooth vector field satisfying

\[
        \sup_{x\in K}|V(x)-V_0(x)|_h<\epsilon,
\]
where $K$ is the compact set from the $h$-control of $U$, then
\[
        \mathcal O(V)\cap U\subset W.
\]
In particular,
\[
        \mathcal O(V)\cap U
\]
is compact and open in $\mathcal O(V)$.
\end{lemma}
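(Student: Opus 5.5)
Since the $h$-control of $U$ gives a compact $K$ and length bounds $\ell_-\le \ell_h(o)\le \ell_+$ for all $o\in\widehat U$, the natural route is a compactness-and-contradiction argument.

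Suppose no $\epsilon$ works. Then there are smooth $V_j$ with $\sup_K|V_j-V_0|_h<1/j$, together with orbit strings $o_j\in\mathcal O(V_j)\cap(U-W)$.

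\textbf{Step 1: bound the periods.} Lift each $o_j$ to a parametrized loop in $\widehat U$. Its image lies in $K$. Since $V_0$ is nonsingular, it has a positive lower bound on $K$, and so does $V_j$ for $j$ large. From $\dot o_j=T_jV_j(o_j)$ we get $\ell_h(o_j)=T_j\int|V_j|_h$, so the bounds $\ell_-\le\ell_h(o_j)\le\ell_+$ give $0<c\le T_j\le C$. In particular the loops do not collapse, and after passing to a subsequence $T_j\to T_\infty>0$.

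\textbf{Step 2: extract a limit.} The derivatives $|\dot o_j|_h=T_j|V_j|_h$ are uniformly bounded on $K$, so the $o_j$ are equicontinuous with images in the compact set $K$. Arzel\`a--Ascoli gives a $C^0$-limit $o_\infty$. Writing $o_j(t)=o_j(0)+\int_0^t T_jV_j(o_j)$ in charts and using $V_j\to V_0$ uniformly on $K$ and continuity of $V_0$, the limit satisfies $\dot o_\infty=T_\infty V_0(o_\infty)$. So $o_\infty$ is a $C^1$ closed $V_0$-orbit, hence smooth and nonconstant. Bootstrapping through the ODE upgrades the convergence to $C^1$ at least, and convergence in the loop-space topology used for $LX/S^1$ should follow similarly. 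I expect this to be the main technical point: matching the topology of $LX$ (smooth or compact-open) with the convergence actually available. Only $V_j\to V_0$ in $C^0$ is known, so one would either take the topology on $\mathcal O$ to be the $C^0$/compact-open one, as the paper does, or argue that orbit strings only need $C^0$ closeness.

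\textbf{Step 3: reach a contradiction.} The orbit strings $[o_j]$ lie in $U-W\subset\overline U-W$. The limit $[o_\infty]$ lies in $\overline U$ and not in the open set $W$. It is also in $\mathcal O(V_0)$, so $[o_\infty]\in\overline U\cap\mathcal O(V_0)=N_0\subset W$, a contradiction.

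For the final claim, $\mathcal O(V)\cap U$ is open in $\mathcal O(V)$ because $U$ is open. It equals $\mathcal O(V)\cap\overline W$, since $\mathcal O(V)\cap U\subset W$ and $\overline W\subset U$. Compactness then follows from the same Arzel\`a--Ascoli argument applied to a sequence in $\mathcal O(V)\cap\overline W$ with $V$ fixed: the limit is a $V$-orbit lying in $\overline W\subset U$.
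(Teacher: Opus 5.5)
Your proposal is correct and is essentially the paper's proof: you use the same period bounds from nonsingularity of $V_0$ and the $h$-control, and Arzel\`a--Ascoli combined with the orbit equation to produce a limit $V_0$-orbit in $\overline U - W$ contradicting $\overline U\cap\mathcal O(V_0)=N_0\subset W$, and you make the same reduction of compactness to $\mathcal O(V)\cap\overline W$. The topological caveat you flag is genuine, but it is not a gap relative to the paper, which likewise only extracts $C^1$ convergence; the argument is therefore valid for the compact-open or $C^1$ topology on the loop space, though not for the $C^\infty$ one.
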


\begin{proof}
Let $K$, $\ell_-$, and $\ell_+$ be the compact set and length bounds
from the $h$-control of $U$.  Since $V_0$ is nonsingular and $K$ is
compact, there are constants
\[
        0<B<A<\infty
\]
such that
\[
        B\leq |V_0(x)|_h\leq A
\]
for all $x\in K$.  We choose $\epsilon<B/2$.  Then every vector field
$V$ satisfying
\[
        \sup_K |V-V_0|_h<\epsilon
\]
obeys
\[
        |V(x)|_h\geq B/2,
        \qquad
        |V(x)|_h\leq A+\epsilon
\]
for all $x\in K$.

Suppose the conclusion failed for the chosen $W$.  Then there would be
vector fields
\[
        V_j\to V_0
\]
uniformly on $K$, and orbit strings
\[
        o_j\in \bigl(\mathcal O(V_j)\cap U\bigr)-W.
\]
Choose parametrized representatives, still denoted $o_j$, satisfying
\begin{equation}\label{eq_orbitequationT}
        \dot o_j(s)=T_j V_j(o_j(s)).
\end{equation}
 
Since $o_j\in\widehat U$, controlledness gives
\[
        \operatorname{image}(o_j)\subset K,
        \qquad
        \ell_-\leq\ell_h(o_j)\leq\ell_+.
\]
For $j$ sufficiently large,
\[
        B/2\leq |V_j|_h\leq A+\epsilon
\]
on $K$.  Hence
\[
        \frac{\ell_-}{A+\epsilon}\leq T_j\leq \frac{2\ell_+}{B}.
\]
The velocities are also uniformly bounded:
\[
        |\dot o_j(s)|_h
        =
        T_j|V_j(o_j(s))|_h
        \leq
        \frac{2\ell_+(A+\epsilon)}{B}.
\]
Thus the $o_j$ are equicontinuous and have images in the fixed compact set
$K$.  By the Arzelà--Ascoli theorem, after passing to a subsequence,
\[
        o_j\to o_\infty
\]
uniformly.  After passing to a further subsequence, $T_j\to T_\infty$, with $T_\infty>0$.  

Hence $o _{j}$ is a Cauchy sequence in the
$C ^{1}$ space of maps, by the defining equation \eqref{eq_orbitequationT}.  Then $C ^{1}$ completeness gives that
\[
        \dot o_\infty=T_\infty V_0(o_\infty).
\]
Thus $o_\infty$ is a closed $V_0$-orbit string.  Moreover
\[
        o_\infty\in \overline U-W.
\]
This contradicts
\[
        \overline U\cap\mathcal O(V_0)=N_0\subset W.
\]
Therefore
\[
        \mathcal O(V)\cap U\subset W
\]
for all $V$ sufficiently close to $V_0$ on $K$.

It remains to prove compactness.  Let $o_j$ be any sequence in
$\mathcal O(V)\cap U$.  The same estimates as above give uniformly bounded
periods and velocities, with all images contained in $K$.  Arzel\`a--Ascoli
therefore gives a convergent subsequence.  Its limit is again a closed
$V$-orbit.  Since the inclusion just proved gives
\[
        \mathcal O(V)\cap U
        =
        \mathcal O(V)\cap\overline W
\]
and $\overline W\subset U$, the limit lies in
$\mathcal O(V)\cap U$.  Thus $\mathcal O(V)\cap U$ is compact.  It is open
because $U$ is open.
\end{proof}

We call such an $\epsilon$ a \textbf{\emph{Fuller constant}} for the data
\[
        (V_0,N_0,U,W).
\]
It also depends on the auxiliary control data $K,\ell_-,\ell_+$, which will
always be understood from the chosen $U$.

\begin{lemma}[Fuller continuation]\label{lem_continuation}
Let $V_0$, $N_0$, $U$, $W$, and $\epsilon$ be as in
Lemma~\ref{lem_isolation}.  Let
\[
        \{V_t\}_{t\in[0,1]}, \, V_{t=0}=V_0
\]
be a continuous family such that for every $t\in[0,1]$,
\[
        \sup_{x\in K}|V_t(x)-V_0(x)|_h<\epsilon,
\]
where $K$ is the compact set from the $h$-control of $U$.  Define
\[
        \mathcal N
        =
        (U\times[0,1])\cap \mathcal O(\{V_t\}).
\]
Then $\mathcal N$ is compact and open in the orbit cobordism
$\mathcal O(\{V_t\})$.  Its endpoint slices are
\[
        N_0=U\cap\mathcal O(V_0),
        \qquad
        N_1=U\cap\mathcal O(V_1).
\]
Consequently,
\[
        i(N_0,V_0)=i(N_1,V_1).
\]
We call $N_1$ the Fuller continuation of $N_0$.
\end{lemma}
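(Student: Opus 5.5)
The plan is to reduce the lemma to the basic invariance \eqref{eq_basicinvariance} applied to the cobordism $\mathcal N=(U\times[0,1])\cap\mathcal O(\{V_t\})$. Once $\mathcal N$ is shown to be compact and open in $\mathcal O(\{V_t\})$, the endpoint slice identification and the equality $i(N_0,V_0)=i(N_1,V_1)$ follow immediately. The slice at $t=0$ is $U\cap\mathcal O(V_0)=N_0$ by the isolation hypothesis $\overline U\cap\mathcal O(V_0)=N_0$. The slice at $t=1$ is by definition $U\cap\mathcal O(V_1)$.

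\emph{Openness.} $U\times[0,1]$ is open in $(LX/S^1)\times[0,1]$, so its intersection with the orbit cobordism, which carries the subspace topology, is relatively open.

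\emph{Closedness via isolation.} Every $V_t$ satisfies $\sup_K|V_t-V_0|_h<\epsilon$. Lemma~\ref{lem_isolation}, applied to each $V_t$, gives $\mathcal O(V_t)\cap U\subset W$, and hence $\mathcal N\subset W\times[0,1]$. Since $\overline W\subset U$, we get $\mathcal N=(\overline W\times[0,1])\cap\mathcal O(\{V_t\})$. It is therefore enough to show that the orbit cobordism meets $\overline W\times[0,1]$ in a compact set.

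\emph{Compactness.} I will rerun the Arzelà--Ascoli argument from the proof of Lemma~\ref{lem_isolation}, now with varying parameter $t_j$. Take a sequence $(o_j,t_j)\in\mathcal N$ with parametrized representatives satisfying $\dot o_j=T_jV_{t_j}(o_j)$. The $h$-control of $U$ gives $\operatorname{image}(o_j)\subset K$ and $\ell_-\le\ell_h(o_j)\le\ell_+$. On $K$ we have $B/2\le|V_{t_j}|_h\le A+\epsilon$, so $T_j$ is bounded above and below away from $0$, and the velocities are uniformly bounded. Pass to a subsequence with $t_j\to t_\infty$, $T_j\to T_\infty>0$, and $o_j\to o_\infty$ uniformly. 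The family $\{V_t\}$ is continuous in $t$ in the $C^0$ topology on $K$, so $V_{t_j}\to V_{t_\infty}$ uniformly on $K$. The orbit equation then gives $C^1$ convergence, and $\dot o_\infty=T_\infty V_{t_\infty}(o_\infty)$. Thus $(o_\infty,t_\infty)\in\mathcal O(\{V_t\})$. Its class lies in $\overline W\subset U$, because $\overline W$ is closed and uniform convergence of loops implies convergence in $LX/S^1$. So $(o_\infty,t_\infty)\in\mathcal N$, which proves compactness.

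The main obstacle I expect is this last step: checking that the limit is an honest closed orbit that stays inside the controlled region. This needs two things. First, the period lower bound (from $\ell_-$ and the upper speed bound) to rule out collapse to constant loops, which are excluded from $S(V)$. Second, compatibility of the $C^0$ loop topology used in Arzelà--Ascoli with the topology on $LX/S^1$ in which $U$ and $W$ are defined. I would handle the second point by noting that on orbits with bounded periods the relevant topologies agree, by the orbit equation, as already done implicitly in Lemma~\ref{lem_isolation}.
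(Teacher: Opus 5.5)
Your proposal is correct and follows essentially the same route as the paper. Openness comes from openness of $U$. Lemma~\ref{lem_isolation} applied to each $V_t$ gives $\mathcal N=(\overline W\times[0,1])\cap\mathcal O(\{V_t\})$. An Arzel\`a--Ascoli argument with a convergent parameter sequence $t_j\to t_\infty$ gives compactness, and basic invariance \eqref{eq_basicinvariance} finishes. Your handling of the varying-parameter compactness step (uniform convergence $V_{t_j}\to V_{t_\infty}$ on $K$, the $C^1$ upgrade via the orbit equation, and the period lower bound excluding constant limits) is more explicit than the paper, which simply refers back to the proof of Lemma~\ref{lem_isolation}.
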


\begin{proof}
Since $U$ is open,
\[
        \mathcal N
        =
        (U\times[0,1])\cap \mathcal O(\{V_t\})
\]
is open in $\mathcal O(\{V_t\})$.

By Lemma~\ref{lem_isolation}, applied to each $V_t$, we have
\[
        \mathcal O(V_t)\cap U\subset W
\]
for every $t\in[0,1]$.  Since
\[
        \overline W\subset U,
\]
this implies
\[
        \mathcal N
        =
        (\overline W\times[0,1])\cap \mathcal O(\{V_t\}).
\]
The compactness then works as in 
Lemma~\ref{lem_isolation}. 
% Indeed, for a sequence $(o_j,t_j)\in\mathcal N$, compactness of
% $[0,1]$ gives $t_j\to t_\infty$ after passing to a subsequence, and the
% same period and velocity bounds as in Lemma~\ref{lem_isolation} give
% $o_j\to o_\infty$ in $C ^{0}$ norm. Following the argument
% in Lemma \ref{lem_isolation} we upgrade this to $C ^{1}$
% convergence, which also means that $o _{\infty} \in
% O(\{V_t\})$ and hence $\mathcal{N}$ as the latter is closed.   
The endpoint slices are
\[
        \mathcal N\cap\mathcal O(V_0)
        =
        U\cap\mathcal O(V_0)
        =
        N_0
\]
and
\[
        \mathcal N\cap\mathcal O(V_1)
        =
        U\cap\mathcal O(V_1)
        =
        N_1.
\]
Fuller's basic invariance applied to the compact open cobordism
$\mathcal N$ gives
\[
        i(N_0,V_0)=i(N_1,V_1).
\]
\end{proof}
\begin{example}[Geodesic flows and the Sasaki metric]\label{exm_sasaki}
Let $(Y,g_Y)$ be a complete Riemannian manifold, and let
\[
        C _{g}=S^*Y
\]
be its unit cotangent bundle with Liouville contact form $\lambda_{g_Y}$.
The Reeb vector field $R^{\lambda_{g_Y}}$ is the geodesic Reeb field.

Let $g_S$ be the Sasaki metric on $S^*Y$.  It is defined using the
Levi-Civita connection of $g_Y$.  Thus
\[
        TC=T^{\mathrm{vert}}C\oplus T^{\mathrm{hor}}C
\]
is an orthogonal splitting, where
\[
        T^{\mathrm{vert}}C=\ker(\operatorname{pr}_*)
\]
and $T^{\mathrm{hor}}C$ is the Levi-Civita horizontal subbundle.  If
$\xi,\eta\in T(S^*Y)$ are horizontal, then
\[
        g_S(\xi,\eta)=g_Y(\operatorname{pr}_*\xi,\operatorname{pr}_*\eta),
\]
and the vertical part is measured using the metric induced by $g_Y$ on the
cotangent fibers.

The basic property needed in the paper is the following.  If $o$ is a
nonzero constant-speed $g_Y$-geodesic and $\widetilde o$ is its unit cotangent
lift, then
\begin{equation}\label{eq_sasaki}
        \ell_{g_S}(\widetilde o)=\ell_{g_Y}(o).
\end{equation}
Indeed, the covector component of $\widetilde o$ is parallel along $o$, so the
vertical part of $\dot{\widetilde o}$ vanishes and the Sasaki speed equals the
base speed.

If $\beta\in\pi_1^{\mathrm{inc}}(Y)$ and $U\subset S(g_Y,\beta)$ is
$g_Y$-length bounded, then the set of canonical lifts
\[
        \widetilde U\subset S(R^{\lambda_{g_Y}})
\]
is $g_S$-length bounded by \eqref{eq_sasaki}.
\end{example}
\section{\texorpdfstring{A local fibered calculation of $F$}{A local fibered calculation of F}} \label{sec_A local fibered calculation}
We prove in this section the technical lemma computing the
local index, used in Theorem \ref{thm:EulerProduct}.

Let
\[
        P:S^*X\to\mathbb R
\]
be the verticality function as in the proof of
Theorem~\ref{thm:EulerProduct},
\[
        P(\xi)=|\pi^{\operatorname{vert}}v_\xi|_g^2.
\]
Here $v_\xi$ is the $g$-dual unit vector to $\xi$, and
\[
        \pi^{\operatorname{vert}}:TX\to T^{\operatorname{vert}}X
\]
is the $g$-orthogonal projection.  The notation
$\beta$ and $\beta_Z$ is likewise as in
Theorem~\ref{thm:EulerProduct}.

\begin{lemma}[Local product calculation]\label{lem_fuller-local-product-geodesic}
Let
\[
        Z\hookrightarrow X\xrightarrow{p}Y
\]
be a parallel Riemannian submersion.  Let
\[
        \widetilde f=f\circ p\circ\operatorname{pr}
\]
for a Morse function $f:Y\to\mathbb R$, and define the vector field on
$S^*X$
\[
        V_\varepsilon
        =
        R^{\lambda_g}
        -
        \varepsilon\nabla_{g_S}(P+\widetilde f).
\]
Fix
\[
        y\in\operatorname{Crit}(f),
\]
and let
\[
        N_{\varepsilon,y}
\]
be the isolated set of closed $V_\varepsilon$-orbit strings lying over the
fiber
\[
        Z_y=p^{-1}(y),
\]
as in the proof of Theorem~\ref{thm:EulerProduct}.  Then
\[
        i(N_{\varepsilon,y},V_\varepsilon)
        =
        (-1)^{\dim Y+\operatorname{morse}(y)}
        \sum_{\alpha\in S_y}F(g_y,\alpha),
\]
where $S_y$ is the $\pi_1(Y,y)$-orbit of the corresponding fiber
representative in $\pi_1^{\mathrm{inc}}(Z_y)$.  In particular, if the fibration is holonomy-finite in the relevant class, the sum is finite.
\end{lemma}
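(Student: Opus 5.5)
The plan is to compute $i(N_{\varepsilon,y},V_\varepsilon)$ by a homotopy argument localized near the fiber $Z_y$, followed by a splitting of the linearized return map into a fiber part and a normal part. Choose a contractible coordinate ball $U\ni y$ containing no other critical point of $f$. By the local de Rham product decomposition for the parallel Riemannian submersion, identify $p^{-1}(U)\cong Z_y\times U$ isometrically, with $g=g_y\oplus g_Y|_U$. We may also deform $f$ near $y$, through functions with $y$ as the only critical point in $U$, to a nondegenerate quadratic form in normal coordinates. Such a deformation keeps the orbit set isolated: the Lyapunov argument of Theorem \ref{thm:EulerProduct} shows closed orbits lie in $\{P=1\}$ over $\operatorname{Crit}(f)$. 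Basic invariance \eqref{eq_basicinvariance} then shows the index is unchanged. Next, regroup by fiber free homotopy classes. A class $\beta$ orbit over $Z_y$ has fiber class in $\mathcal R_y(\beta)=S_y$ by Lemma \ref{lem_fiber-representatives-holonomy-orbit}, and these classes give disjoint compact open pieces (as in the proof of Lemma \ref{lem_powerperiodic}). Hence $N_{\varepsilon,y}=\bigsqcup_{\alpha\in S_y}N_{\varepsilon,y,\alpha}$, and by additivity it suffices to show
\[
i(N_{\varepsilon,y,\alpha},V_\varepsilon)=(-1)^{\dim Y+\operatorname{morse}(y)}F(g_y,\alpha).
\]

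For a fixed $\alpha$, first perturb the fiber metric $g_y$ on $Z_y$ to an $\alpha$-regular $g_y'$, keeping it $\alpha$-taut-close in the sense of Lemma \ref{lem_continuation}. Use the product metric $g_y'\oplus g_Y$ on $Z_y\times U$, interpolated back to $g$ outside a compact region. By Lemma \ref{lem_continuation} applied twice, once on $X$ and once on $Z_y$, both sides are unchanged, and $F(g_y,\alpha)=\sum_o(-1)^{\operatorname{morse}(o)}/\mult(o)$ over the finitely many nondegenerate $g_y'$-geodesic strings $o$ of class $\alpha$. The closed orbits of $V_\varepsilon$ in this piece are then exactly the lifts $\widetilde o$ sitting in $\{P=1\}$ over $y$, and their multiplicities agree with $\mult(o)$. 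So it remains to compute the local fixed point index $i(\widetilde o)$ of the return map of $V_\varepsilon$.

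On $S^*(Z_y\times U)$, near the vertical locus over $y$, decompose a transverse slice to $\widetilde o$ as follows.
\begin{enumerate}
\item The slice of $S^*Z_y$ transverse to $\widetilde o$, on which the return map is the fiber geodesic return map. Its index is $(-1)^{\operatorname{morse}(o)}$ by the Appendix formula.
\item The base directions $T_yU\cong\mathbb R^{\dim Y}$, on which the flow is $-\varepsilon\nabla f$ for time $\approx T(o)$. The return map there is approximately $\exp(-\varepsilon T\,\mathrm{Hess} f)$, which has $\operatorname{morse}(y)$ expanding directions, so its index is $(-1)^{\operatorname{morse}(y)}$.
\item The $\dim Y$ directions normal to $\{P=1\}$ in the cosphere fiber, where the $-\varepsilon\nabla P$ term together with the Reeb shear acts.
\end{enumerate}
In item 3, $\{P=1\}$ is a nondegenerate maximum of $P$, so each normal direction is contracting. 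However, the horizontal momentum $\eta$ also drives base motion through $\dot y=\eta$. The linearization in the pair $(\eta,y)$ is therefore approximately
\[
\begin{pmatrix}-2\varepsilon & 0\\ T & -\varepsilon\,\mathrm{Hess} f\end{pmatrix}
\]
in block form, which is triangular. Its eigenvalue signs give index $(-1)^{\operatorname{morse}(y)}$ from the base block and $+1$ from the $\eta$ block. Multiplying the three contributions gives $(-1)^{\operatorname{morse}(o)+\operatorname{morse}(y)}$, and the extra $(-1)^{\dim Y}$ has to come from the sign convention for the fixed point index of $\mathrm{Id}-D\phi$.

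The main obstacle is this sign bookkeeping in the normal block, namely checking whether the claimed $(-1)^{\dim Y}$ is really produced. I would first verify it in the model case $X=Z\times\mathbb R^{m}$ with $f$ quadratic. There the flow is explicitly computable: a linear ODE in $(\eta,y)$ coupled to geodesic motion on $Z$. The index should be read off from $\operatorname{sign}\det(\mathrm{Id}-D\phi)$ on the $2m$-dimensional normal block, by checking which eigenvalues of $D\phi$ exceed $1$ or lie in $(0,1)$. If the sign $(-1)^{\dim Y}$ does not appear there, I would re-examine whether the $\eta$-directions are expanding rather than contracting, since $-\nabla P$ at a maximum of $P$ pushes away from $\{P=1\}$. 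If they are expanding, the count of expanding eigenvalues rises by $\dim Y$, which produces exactly the factor $(-1)^{\dim Y}$. I expect that to be the correct accounting.
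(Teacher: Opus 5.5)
\textbf{The gap: the sign of the normal block.} The factor $(-1)^{\dim Y}$ in the lemma comes from exactly one step, the sign of the $\eta$-block, and your main computation gets that step wrong. You declare the directions normal to $\{P=1\}$ contracting and put $-2\varepsilon$ in that block. The time-$T$ map there then has eigenvalues $e^{-2\varepsilon T}\in(0,1)$, the block contributes $+1$, and you get $(-1)^{\operatorname{morse}(o)+\operatorname{morse}(y)}$. That contradicts the lemma whenever $\dim Y$ is odd.

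Your repair, that the missing $(-1)^{\dim Y}$ ``has to come from the sign convention for the fixed point index of $\mathrm{Id}-D\phi$'', is not available. The index of a nondegenerate fixed point is $\operatorname{sign}\det(\mathrm{Id}-D\phi)$, with no leftover freedom. It is also the same convention that gives $(-1)^{\operatorname{morse}(o)}$ on the fiber factor. So as written, the proposal does not establish the formula: it offers two incompatible accountings and only ``expects'' the right one.

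\textbf{The fix is your fallback.} The fallback in your last paragraph is correct, and it is what the paper uses (its factor $\kappa$, with ``outward radial linearization'').
In the local model $P=1-|\eta|^2$, so $\{P=1\}$ is a Morse--Bott maximum. To first order, $-\varepsilon\nabla_{g_S}P$ is the outward field $2\varepsilon\eta$. This agrees with the computation in the proof of Theorem~\ref{thm:EulerProduct} that $P$ is nonincreasing along $V_\varepsilon$-trajectories, i.e.\ trajectories leave $\{P=1\}$.
Hence the $\eta$-block of the time-$T$ map is $\approx e^{2\varepsilon T}\mathrm{Id}$. All $\dim Y$ of its eigenvalues exceed $1$, and $\operatorname{sign}\det(\mathrm{Id}-e^{2\varepsilon T}\mathrm{Id})=(-1)^{\dim Y}$.

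Correct the $(1,1)$ entry of your generator matrix to $+2\varepsilon$. The lower-left entry should be the identification $\eta\mapsto\eta^\sharp$, not $T$. With these corrections, your block-triangular observation finishes the job:
\begin{itemize}
\item the coupling $\dot y=\eta^\sharp$ feeds only from $\eta$ into $y$;
\item the fiber part is independent of $(\eta,y)$ to first order, since the fiber speed is $\sqrt{1-|\eta|^2}$.
\end{itemize}
So $\det(\mathrm{Id}-D\phi)$ factors over the diagonal blocks and gives $(-1)^{\dim Y}(-1)^{\operatorname{morse}(y)}(-1)^{\operatorname{morse}(o)}$.

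This is a more careful justification than the paper's. The paper writes the return map as a literal product $\Phi_{\mathrm{fib}}\times\psi_y\times\kappa$ and invokes Dold's product formula, ignoring the Reeb coupling $\dot y=\eta^\sharp$; your triangular structure shows that coupling is harmless. Your reductions are also sound and fill in what the paper leaves implicit: isolating by fiber class $\alpha\in S_y$, and perturbing to an $\alpha$-regular fiber metric so that the fiber sum is literally $F(g_y,\alpha)$. Once you commit to the expanding sign, the argument is complete and follows the paper's route.
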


\begin{proof}
Let
\[
        T^{\operatorname{vert}}X=\ker(dp),
        \qquad
        H=(T^{\operatorname{vert}}X)^{\perp_g}.
\]
Since the submersion is parallel, the two distributions
\[
        T^{\operatorname{vert}}X
        \quad\text{and}\quad
        H
\]
are preserved by the Levi-Civita connection.  
Since $T^{\operatorname{vert}}X$ and $H=(T^{\operatorname{vert}}X)^{\perp_g}$
are parallel complementary distributions, the local de Rham decomposition
theorem gives a product
decomposition
\[
        p^{-1}(D_y)\cong Z_y\times D_y,
        \qquad
        g=g_y\oplus g_Y| _{D _{y}},
\]
for a normal ball neighborhood $D _{y} \subset Y$ centered
at $y$. 
See Kobayashi--Nomizu~\cite[Chapter
IV]{cite_KobayashiNomizu1963}.

For this local calculation we suppress the fixed-$C$ identification and
write $S^*X$ for the $g$-unit cotangent bundle.
Let
\[
        C^{\operatorname{vert}}=\{P=1\}\subset S^*X
\]
be the vertical locus.  Over $Z_y$, this locus is naturally identified with
\[
        S^*Z_y\subset S^*X.
\]
Near
\[
        S^*Z_y\subset C^{\operatorname{vert}},
\]
the product decomposition gives the normal-bundle model
\begin{equation}\label{eq_localmodel}
        S^*Z_y\times D_y\times B_y^*,
\end{equation}
where
\[
        B_y^*\subset T_y^*Y
\]
is a small ball in the horizontal cotangent direction.  In this model, the
vertical locus is
\[
        S^*Z_y\times D_y\times\{0\}.
\]

A covector near the vertical locus decomposes as
\[
        \xi=\xi_{\operatorname{vert}}+\eta,
        \qquad
        \eta\in B_y^*.
\]
The unit condition is
\[
        |\xi_{\operatorname{vert}}|_{g_y^*}^2
        +
        |\eta|_{g_Y^*}^2
        =
        1.
\]
Therefore
\[
        P(\xi)
        =
        |\xi_{\operatorname{vert}}|_{g_y^*}^2
        =
        1-|\eta|_{g_Y^*}^2.
\]
Thus $P$ has a Morse--Bott maximum along $C^{\operatorname{vert}}$.
% Consequently the normal
% fixed point index contribution is
% \[
%         (-1)^{\dim Y}.
% \]

In the local model \eqref{eq_localmodel}, 
\[
        N_{\varepsilon,y}
        =
        N_y^{\operatorname{fib}}\times\{y\}\times\{0\},
\]
where $N_y^{\operatorname{fib}}$ is the relevant closed orbit set of the
fiber geodesic Reeb flow on $S^*Z_y$.

We now describe the local return map at an element of $N_{\varepsilon,y}$.
Along the vertical locus over $Z_y$, the fiber component of
$V_\varepsilon$ is the geodesic Reeb field
        $R^{\lambda_{g_y}}$
on $S^*Z_y$.  The base component is
\[
        -\varepsilon\nabla_{g_Y}f
\]
on $D_y$.  The normal covector component is the outward radial component
coming from
\[
        -\varepsilon\nabla_{g_S}P
\]
on $B_y^*$.

Let
\[
        o\in N_y^{\operatorname{fib}},
\]
and choose a product transverse section
\[
        \Sigma
        =
        \Sigma^{\operatorname{fib}}\times D_y\times B_y^*,
\]
where
\[
        \Sigma^{\operatorname{fib}}\subset S^*Z_y
\]
is transverse to the fiber Reeb flow near $o$.  With respect to this product
section, the local return map of $V_\varepsilon$ is the product
\[
        \Phi_{\operatorname{fib}}\times\psi_y\times\kappa.
\]
Here:
\begin{itemize}
        \item $\Phi_{\operatorname{fib}}$ is the local return map of the
        fiber geodesic flow generated by $R^{\lambda_{g_y}}$;
        \item $\psi_y$ is the small-time map of
        \[
                -\varepsilon\nabla_{g_Y}f
        \]
        near $y$;
        \item $\kappa$ is the normal return map induced by
        \[
                -\varepsilon\nabla_{g_S}P
        \]
        near $0\in B_y^*$.
\end{itemize}

The fixed point index is multiplicative under products,
Dold~\cite[Chapter VII, Section 5]{cite_Dold1972}.  Hence
\[
        i_{\operatorname{fp}}(o;V_\varepsilon)
        =
        i_{\operatorname{fp}}(o;R^{\lambda_{g_y}})
        \cdot
        i_{\operatorname{fp}}(\psi_y,y)
        \cdot
        i_{\operatorname{fp}}(\kappa,0).
\]
Since $\psi_y$ is the positive small-time map of
$-\varepsilon\nabla_{g_Y}f$, the map
$\operatorname{id}-\psi_y$ has the same local degree as
$\nabla_{g_Y}f$.  Thus, because $y$ is a Morse critical point,
\[
        i_{\operatorname{fp}}(\psi_y,y)
        =
        (-1)^{\operatorname{morse}(y)}.
\]
Since the normal component of
\[
        -\varepsilon\nabla_{g_S}P
\]
has outward radial linearization on a vector space of dimension $\dim Y$,
the map $\operatorname{id}-\kappa$ has local degree $(-1)^{\dim Y}$.
Hence
\[
        i_{\operatorname{fp}}(\kappa,0)
        =
        (-1)^{\dim Y}.
\]
Therefore
\[
        i_{\operatorname{fp}}(o;V_\varepsilon)
        =
        (-1)^{\dim Y+\operatorname{morse}(y)}
        i_{\operatorname{fp}}(o;R^{\lambda_{g_y}}).
\]

The multiplicity of $o$ is unchanged by taking the product with the isolated
base and normal fixed points.  Thus
\[
        \frac{i_{\operatorname{fp}}(o;V_\varepsilon)}{m(o)}
        =
        (-1)^{\dim Y+\operatorname{morse}(y)}
        \frac{i_{\operatorname{fp}}(o;R^{\lambda_{g_y}})}{m(o)}.
\]
Summing over all fiber orbit strings in the local block gives
\[
        i(N_{\varepsilon,y},V_\varepsilon)
        =
        (-1)^{\dim Y+\operatorname{morse}(y)}
        \sum_{\alpha\in S_y}F(g_y,\alpha).
\]
This proves the claim.
\end{proof}

\section{Sky catastrophes} \label{appendix_bluesky}
We now recall the topological formulation of sky
catastrophes as studied in ~\cite{cite_SavelyevFuller}. This 
is the formulation necessary for Section 
\ref{sec_digression}.
\begin{definition} \label{def:bluesky}
Let
\[
        \{V_t\}_{t\in[0,1]}\subset\mathcal V(X)
\]
be a continuous family.  We say that the family has a sky catastrophe in class
$\beta$ if there is an endpoint orbit string
\[
        y\in \mathcal O(V_0,\beta)\cup \mathcal O(V_1,\beta)
        \subset \mathcal O(\{V_t\},\beta)
\]
such that no compact open subset of $\mathcal O(\{V_t\},\beta)$ contains $y$.

Equivalently, the endpoint orbit string $y$ cannot be included in any
compact continuation block through the family.
\end{definition}

For suitably regular families, this agrees with the preliminary
Definition \ref{def_preliminarysky}.
\section*{Acknowledgements} 
I am grateful to the IAS for resources provided during my
stay as a member, where some of this paper was written.
Thanks also to Mohammed Abouzaid, Semon Rezchikov, Viktor
Ginzburg and Yasha Eliashberg for interesting
related discussions, and to Leonid Polterovich for comments on the history of the graph property of invariant tori.
\bibliographystyle{siam} 
\bibliography{link.bib} 
% \bibliography{/root/texmf/bibtex/bib/link}  
% \bibliography{/home/yashasavelyev/texmf/bibtex/bib/link} 
% \bibliography{/home/yasha/texmf/bibtex/bib/link} 
\end{document}